\documentclass[a4paper]{article}
\usepackage[T2A]{fontenc}
\usepackage[english]{babel}

\usepackage{amsmath, amsthm}
\usepackage{amssymb}
\usepackage{amsfonts}
\usepackage{srcltx, dsfont}

\theoremstyle{plain}
\newtheorem{theorem}{Theorem}[section]
\newtheorem{lemma}{Lemma}[section]


\numberwithin{equation}{section}

\def\Tht{\Theta}
\def\tht{\theta}
\def\Om{\Omega}
\def\om{\omega}
\def\e{\varepsilon}
\def\g{\gamma}

\def\l{\lambda}
\def\p{\partial}
\def\D{\Delta}
\def\a{\alpha}

\def\d{\delta}
\def\L{\Lambda}
\def\z{\zeta}

\def\vt{\vartheta}
\def\vk{\varkappa}

\def\H{W_2}
\def\Ho{\mathring{W}_2}

\def\hf{\mathfrak{h}}

\def\di{\,d}

\def\Op{\mathcal{H}}

\def\x{\mathrm{x}}
\def\V{\mathrm{V}}

\def\lf{\lfloor}
\def\rf{\rfloor}

\def\cE{\mathcal{E}}

\def\cL{\mathcal{L}}


\DeclareMathOperator{\Tr}{Tr}
\DeclareMathOperator{\dvr}{div}

\DeclareMathOperator{\Dom}{\mathfrak{D}}

\DeclareMathOperator{\dist}{dist}
\DeclareMathOperator{\supp}{supp}


\headheight=8.25pt \topmargin=0pt \textheight=620pt
\textwidth=450pt \oddsidemargin=5pt \evensidemargin=10pt

\allowdisplaybreaks

\begin{document}

\title{Asymptotic analysis of exit time for dynamical systems with a single well potential}

\author{D. Borisov$^{1,2,3}$, O. Sultanov$^1$}

\date{\empty}

\maketitle

{\small
    \begin{quote}
1) Institute of Mathematics, Ufa Federal Research Center, Russian Academy of Sciences, \newline Chernyshevsky str. 112, Ufa, Russia, 450008
\\
2) Bashkir State  University, Zaki Validi str. 32, Ufa, Russia, 450000
\\
3) University of Hradec Kr\'alov\'e
62, Rokitansk\'eho, Hradec Kr\'alov\'e 50003, Czech Republic
\\
Emails: borisovdi@yandex.ru, oasultanov@gmail.com
\end{quote}

{\small
    \begin{quote}
    \noindent{\bf Abstract.} We study the exit time   from a bounded multi-dimensional domain $\Om$
 of the
stochastic process $\mathbf{Y}_\e=\mathbf{Y}_\e(t,a)$, $t\geqslant 0$, $a\in \mathcal{A}$, governed by the overdamped Langevin dynamics
\begin{equation*}
 d\mathbf{Y}_\e  =-\nabla V(\mathbf{Y}_\e) dt +\sqrt{2}\e\, d\mathbf{W}, \qquad \mathbf{Y}_\e(0,a)\equiv x\in\Om
\end{equation*}
where $\e$ is a small positive parameter, $\mathcal{A}$ is a sample space, $\mathbf{W}$ is a $n$-dimensional Wiener process. The exit time corresponds to the first hitting of $\p\Om$ by the trajectories of the above dynamical system and the expectation value of this exit time solves the boundary value problem
\begin{equation*}
(-\e^2\D +\nabla V\cdot \nabla)u_\e=1\quad\text{in}\quad\Om,\qquad u_\e=0\quad\text{on}\quad\p\Om.
\end{equation*}
We assume that the function $V$ is smooth enough  and has the only minimum at the origin (contained in $\Om$); the minimum  can be degenerate. At other points of $\Om$, the gradient of $V$ is non-zero and the normal derivative of $V$ at the boundary $\p\Om$ does not vanish as well. Our main result is a complete asymptotic expansion for $u_\e$ as well as for the lowest eigenvalue of the considered problem and for the associated eigenfunction. The asymptotics for $u_\e$ involves a term exponentially large $\e$;  we find this term in a closed form. Apart of this term, we also construct a power in $\e$ asymptotic expansion such that this expansion and a mentioned exponentially large term approximate $u_\e$ up to arbitrarily  power of $\e$. We also discuss some probabilistic aspects of our results.

    \medskip

    \noindent{\bf Keywords: }{exit time problem, equations with small parameter at higher derivatives, asymptotics, overdamped Langevin dynamics}

    \medskip

    \noindent{\bf Mathematics Subject Classification: }{35B25, 35C20}
 	
    \end{quote}
}

\section{Introduction}

Equations with a small parameter at higher derivatives is one of the classical directions in the  modern mathematical physics. Such equations arise in various applications and one of them is the exit time problem. Originally, the problem is introduced as a system of It\^o stochastic differential equations~\cite[Sect.~4.1]{Schuss}:
\begin{equation}\label{0.1}
 d\mathbf{Y}_\e(t,a) =\mathbf{F}(\mathbf{Y}_\e(t,a)) dt +\sqrt{2}\e\, d\mathbf{W}, \quad \mathbf{Y}_\e(0,a)\equiv   x,\quad a\in \mathcal{A},
\end{equation}
where $\mathbf{Y}_\e$ is a $n$-dimensional vector, $\mathbf{F}:\,\mathds{R}^n\to\mathds{R}^n$ is a given function. The origin is assumed to be an asymptotically stable equilibrium of equation (\ref{0.1}) as $\e=0$. The second term in the right hand side in (\ref{0.1}) serves as a random perturbation. Namely, $\mathbf{W}=(\mathrm{W}_1(t,a),\dots, \mathrm{W}_n(t,a))$ is a $n$-dimensional Wiener process  on a probability
space $(\mathcal{A}, \mathcal{F},\mathbb{P})$, where  $\mathcal{A}=\{a\}$ is the sample space, $\mathcal{F}$ is a $\sigma$-algebra,  $\mathbb{P}$ is a probability measure. The symbol $\e$ stands for a small positive parameter characterizing the perturbation strength.

A solution to problem (\ref{0.1}) is a random process $\mathbf{Y}_\e(t,a)$, $t\geqslant 0$, $a\in\mathcal{A}$. It is known~\cite[Ch.~4]{WFbook} that the trajectories $\mathbf{Y}(t,a)$ leave a vicinity of the origin almost surely. This process is characterized by the mean exit time introduced as
\begin{equation*}
    u_\e(x):=\mathbb{E}\big(T_\e(a):\,\mathbf{Y}_\e(0,a)=x\big),
\end{equation*}
where
\begin{equation}\label{0.7}
T_\e(a)=\inf \{t\geqslant 0:\, \mathbf{Y}_\e(t,a)\not\in \Om\}
\end{equation}
is the first time, when the trajectory $\mathbf{Y}(t,a)$ hits the boundary $\p\Om$. The function $u_\e$ solves the boundary value problem~\cite[Sect 4.4]{Schuss}
\begin{equation}
\label{0.2}
-\e^2  \D u_\e - \sum\limits_{i=1}^n F_i(x)\frac{\p u_\e}{\p x_i}=1\quad\text{in}\quad\Om, \qquad u_\e=0 \quad\text{on}\quad \p\Om.
\end{equation}
This problem and similar ones for more general equations were studied in a series works.

In \cite{Levinson1950}, there was considered the Dirichlet problem for the equation
\begin{equation}\label{0.3}
\e^2 \D u+A u_x+ B u_y +C=0
\end{equation}
in a two-dimensional domain. Apart of some smoothness, the main assumptions made for the coefficients of equation (\ref{0.3}) was either $A^2+B^2>0$ or $C<0$. The main result was an asymptotic expansion for the solution but the suggested way of calculating the terms in this asymptotics was quite complicated.

A multi-dimensional case was treated in \cite{MS77}. Here the authors considered the Dirichlet problem for the equation
\begin{equation}\label{0.4}
\e \sum\limits_{i,j=1}^{n} a_{ij} \frac{\p^2 u}{\p x_i\p x_j} + \sum\limits_{i=1}^{n} b_i(x)\frac{\p u}{\p x_i}=f
\end{equation}
in a bounded multi-dimensional domain. There was proposed a scheme  for constructing a formal asymptotic expansion for the solution. No justification was made, that is, no rigorous estimates for the error terms were obtained. A feature of the found formal asymptotic expansion is that it involved a constant in the leading term and this constant was not defined during the formal construction. Another, again formal way based on special integration by parts was proposed to determine this constant. The final formula for the constant stated that this constant is exponentially large as $\e$ tends to zero.
The boundary value problem for equation   (\ref{0.4}) was also considered in \cite{Kamin}. The principal part in the equation was the Laplacian. An important difference with work \cite{MS77} was that in \cite{Kamin} the equation was homogeneous, while the boundary condition involved an arbitrary right hand side. The main result stated that the solution of the considered problem converged to some constant uniformly on compact subsets as $\e\to0$. Since the equation was homogeneous, the constant turned out to be independent of $\e$ and this stresses an important difference between having a right hand in the equation  or in the boundary condition. A similar result but for the Neumann condition on the boundary was obtained in \cite{Pe}. The results of such kind, the convergence of the solution to a constant for homogeneous equation and inhomogeneous boundary conditions, were also obtained in \cite{Day}, \cite{Ishi}, \cite{Kamin2}, \cite{Kamin3}, \cite{Kamin4}. In paper \cite{Ward01}, there were  considered more general non-stationary problems associated with the above discussed equations and the constant in the leading term in the asymptotics was found by analysing the long time behavior of their solutions. In \cite{FK}, a result of such kind was obtained for a quasi-linear equation.

Problem (\ref{0.2}) was intensively studied in the case $F=-\nabla V$, where $V$ is some function having one  or several extrema in $\Om$. Such case corresponds to the overdamped Langevin dynamics.
The case of one extrema was studied in work \cite{Nec1} for an arbitrary multi-dimensional domain. The main result provided the leading term of the asymptotics of the solution on each compact subset of $\Om$ in the form
\begin{equation}\label{0.5}
u_\e(x)=U_\e(x)(1+O(\e)),
\end{equation}
where $U_\e$ was some explicitly function exponentially large in $\e$. It was said clearly in the work that the employed approach did not allow the author to find a complete asymptotic expansion.
 Close results were earlier obtained in \cite{Sug1}, \cite{Sug2}. In these works, the potential $V$ could have several extrema and the problem was considered on a Riemannian manifold. The main result stated that the solution satisfied the identity
\begin{equation*}
\lim\limits_{\e\to+0} \e^{-\mu} e^{-C_0\e^{-2}} u_\e=C_1
\end{equation*}
with some constants $\mu$, $C_0$ and $C_1$ expressed in terms of certain characteristics of the potential and the domain.

The aforementioned case of a potential $V$ with several extrema attracted a lot of attention  and it corresponds to the so-called metastability phenomenon. The matter is that in the case of several local minima of $V$, the Brownian particle can be first attracted to one of them and then to the other and this makes its dynamics more interesting. Apart of \cite{Sug1}, \cite{Sug2}, a lot of papers was devoted to studying this phenomenon and here we mention just some of them. In \cite{JEMS1}, problem (\ref{0.2}) was considered in an arbitrary multi-dimensional domain. It was assumed that the Hessian of $V$ was non-degenerate at its local minima. The main obtained result was the leading term in the asymptotics for the exit time in form similar to (\ref{0.5}); the error term was $O(\e|\ln\e|)$. In works \cite{Nuh1} there was studied   a   quasi-stationary distribution  in the case when $V$ had a double-well structure. The density of this measure was a first eigenfunction  of the considered operator multiplied by $e^{-\frac{V}{\e^2}}$. The main result was the leading term in the asymptotics for this density in the vicinity of the minima of $V$ and rigorous estimates for the error terms. The case of one global minimum of $V$ and several local minimal was addressed in \cite{Nuh3} and the main results was again the leading term of the asymptotics for the aforementioned measure. It was also stated that this measure has a complete asymptotic expansion with some coefficients $b_{k,i}$; the authors said that  ``the explicit computations of the sequence $b_{k,i}$
 is not possible in practice''.
The same and similar measures were studied also in \cite{Nuh2} and the main result was the leading terms in the asymptotics for the expectation values with respect to the mentioned measures. There were also found the conditions, under which a complete asymptotic expansion could be constructed.

We should also  mention that the exit time was also  studied by probabilistic methods, see, for instance, a classical work \cite{VL} and also \cite{Ki}. In \cite{VL}, an exponential lower bound for the   exit time was obtained. In \cite{Ki}, the exit time was characterized by certain probabilistic asymptotic relations.

In a very recent work \cite{LKOSNT17}, there was considered a model situation for  equation (\ref{0.2}) with $F(x)=-x$, when the domain was the circle.  An explicit solution for the considered problem was constructed. On the base of this formula, the asymptotic expansion for the solution was found.

Apart of the asymptotics for the solution, the behavior of the lowest eigenvalue attracted much attention. The reason is that the reciprocal to this eigenvalue, as well as the structure of the associated eigenfunction played important role in determining various characteristics of random process $\mathbf{Y}$ in (\ref{0.1}).
Such asymptotics was formally constructed in \cite{MS77} for the operator in (\ref{0.4}). The formal construction suggested that this eigenvalue is exponentially close to zero. It should be also said that the asymptotics for the eigenvalues of the operators with a small parameter at higher derivatives were widely studied by many authors. Here we cite only some classical works \cite{HMR}, \cite{HBook}, \cite{QC}, where first rigorous results were proved.
The problem on  complete  asymptotic expansions for the lowest eigenvalues and the associated eigenfunctions of problem (\ref{0.2}) with $F=-\nabla V$ was finally solved in \cite{HN}; the operator was considered on a Riemannian manifold with a boundary and the function $V$ was assumed to be a Morse one.

In the present work we consider problem (\ref{0.2}) in an arbitrary bounded multi-dimensional domain with an infinitely smooth boundary. We assume that $F=-\nabla V$, where $V$ is an arbitrary sufficiently smooth function with the only minimum at the origin and non-zero gradient in  $\Om\setminus\{0\}$. The minimum at the origin can be degenerate with an arbitrary fixed power rate, so, the function $V$ is not necessary to be a Morse one. The detailed assumption on $V$ are formulated in (\ref{2.1a}), (\ref{2.6a}). Our main result is the complete asymptotic expansion for the solution of the considered problem. The structure of the asymptotics is as follows:
\begin{equation}\label{0.6}
u_\e=K_\e\Psi_\e + \textit{boundary layer} + \textit{error term},
\end{equation}
where $\Psi_\e$ is the eigenfunction associated with the lowest eigenvalue of the considered problem, $K_\e$ is a constant, the boundary layer is power in $\e$ and the error is also power. The constant $K_\e$ is represented a sum of two terms, $K_\e^{(exp)}$ and $K_\e^{(pow)}$. The former is exponentially large and we find its complete asymptotic expansion as $e^{\frac{ \min_{\p\Om} V}{\e^2}}$  times a power series in $\e$, while $K_\e^{(pow)}$ has a complete power in $\e$ asymptotic expansion. The error term in (\ref{0.6}) is estimated in various norms, both on the entire domain and compact subdomains.  We also construct a complete expansion for the eigenfunction $\Psi_\e$. The latter is justified in the same norms as (\ref{0.6}). An important feature of our result is that we provide a straightforward and rather simple algorithm of calculating all coefficients in the aforementioned asymptotic expansions. As an example, we find explicitly two terms in all above asymptotics.

The paper is organized as follows. In the next section we formulate the problem and present the main results. In the third section we discuss the results from the probabilistic point of view. The next two section are devoted to finding the asymptotics for the lowest eigenvalue and the associated eigenfunction of the considered problem. In the last section we construct the asymptotics of the solution to problem (\ref{0.2}) and   prove the main result.

\section{Problem and main results}

Let $x=(x_1,\ldots,x_n)$ be Cartesian coordinates in $\mathds{R}^n$, $n\geqslant 2$,
$\Om\subset\mathds{R}^n$ be a bounded domain containing the origin and having an infinitely differentiable boundary, $\x=\x_1(s)=(\x_1(s),\ldots,\x_n(s))$ be the vector equation of $\p\Om$, where $s$ are local coordinates on $\p\Om$ associated with some atlas. Thanks to  the assumed smoothness of $\p\Om$, the functions $\x_i(s)$ are infinitely differentiable.
By $\nu=\nu(s)$ we denote the inward normal to $\p\Om$.

In the vicinity of $\p\Om$ we introduce local coordinates $(s,\tau)$, where $\tau$ is the distance to a point measured along the inward normal $\nu$, that is, $x=\x(s)+\tau\nu(s)$. Since the boundary $\p\Om$ is infinitely differentiable,  the introduced local coordinates are well-defined up to $|\tau|\leqslant \tau_0$ for some sufficiently small fixed $\tau_0$.

By $V=V(x)$ we denote a real function defined on $\Om$ satisfying two main assumptions. The first assumption is on the smoothness:
\begin{equation}\label{2.1a}
 V\in C^2(\overline{\Om}),\quad V\in C^\infty(\overline{B_{\rho_1}}),\qquad V\in C^\infty(\{x:\, 0\leqslant \tau\leqslant \tau_0\}),\qquad d^k V(0)\ne0,
\end{equation}
where  $\rho_1>0$,
$k\in\mathds{N}$, $k\geqslant 2$, are some fixed  constants and $B_\rho$ stands for an open ball or a radius $\rho$ centered at the origin.  The second assumption is as follows:
\begin{equation}\label{2.6a}
\begin{aligned}
&V(0)=0,\qquad \nabla V(0)=0,\qquad  V(x)>0
\quad\text{as}\quad x\in\overline{\Om}\setminus\{0\},
\\
&
 |\nabla V(x)|\geqslant c_1>0
\quad\text{as}\quad x\in\overline{\Om}\setminus B_{\rho_1},
\qquad \frac{\p V}{\p\tau}\leqslant -c_2<0 \quad\text{as}\quad 0\leqslant \tau\leqslant \tau_0,
\end{aligned}
\end{equation}
with some positive constants   $c_1$, $c_2$
independent of $x$.

The main object of our study is an unbounded operator $\Op_\e$ in the space $L_2(\Om)$ with the differential expression
\begin{equation*}
-\e^2\D + \nabla V(x)\cdot \nabla
\end{equation*}
on the domain $\Dom(\Op_\e):=\Ho^2(\Om)$. Hereinafter $\e$ is  a small positive parameter, and   $\Ho^j(\Om)$, $j\geqslant 1$, is the Sobolev space of functions in $\H^j(\Om)$ with the zero trace on $\p\Om$.
The operator $\Op_\e$ is closed. It has a compact resolvent and its spectrum consists of countably many discrete eigenvalues accumulating at infinity only.  Our main aim is to find the asymptotic expansion for the solution to the equation
\begin{equation}\label{3.2}
\Op_\e u_\e=\mathds{1},
\end{equation}
where the symbol $\mathds{1}$ stands for the constant function $\mathds{1}(x)\equiv 1$ on $\Om$.

Before formulating the main results,  we introduce additional assumptions and auxiliary notations.

By
\begin{equation*}
g=
\begin{pmatrix}
g_{11} & \ldots & g_{1\,n-1}
\\
\vdots & \vdots &
\\
g_{n-1\,1} & \vdots & g_{n-1\,n-1}
\end{pmatrix},\qquad b=
\begin{pmatrix}
b_{11} & \ldots & b_{1\,n-1}
\\
\vdots & \vdots &
\\
b_{n-1\,1} & \vdots & b_{n-1\,n-1}
\end{pmatrix}, \qquad g_{ij}=g_{ij}(s),\qquad b_{ij}=b_{ij}(s),
\end{equation*}
we denote the metric tensor on $\p\Om$ and the second fundamental form on the inward side of $\p\Om$.  We define
\begin{equation}\label{2.3}
\begin{aligned}
&\tht_0(s):=V(\x(s)), && \tht_{min}:=\min\limits_{\p\Om} \tht_0(s), && \tht_1(s):=\frac{\p V}{\p\tau}\bigg|_{\tau=0}\leqslant -c_2<0,
\\
&\tht_2(s):=\frac{1}{2} \frac{\p^2 V}{\p\tau^2}\bigg|_{\tau=0},\qquad && \Tht_0:=\ln\sqrt{\det g},\qquad && \Tht_1:=-\sqrt{\det g}\Tr b.
\end{aligned}
\end{equation}

Let $\chi_0=\chi_0(t)$ be an infinitely differentiable function vanishing as $t>2$ and equalling to one as $t<1$. In $\Om$ we define a function
\begin{equation}\label{3.13a}
\chi(x):=\left\{
\begin{aligned}
\chi_0& (3\tau\d^{-1}) &&\text{as}\quad 0\leqslant \tau\leqslant \d,
\\
&\ 0&&\text{otherwise},
\end{aligned}
\right.\qquad \d<\min\left\{\tau_0,\frac{\tht_{min}}{4c_2}\right\}.
\end{equation}
For each subdomain $\om\subset\Om$ we denote $\V_\om:=\sup\limits_{\overline{\om}} V(x)$. We  also introduce the shorthand notation
\begin{equation*}
\|\p^2_{xx} u\|_{L_2(\om)}=\sum\limits_{i,j=1}^{2}\left\|\frac{\p^2  u}{\p x_i \p x_j}\right\|_{L_2(\om)}\quad \text{for}\quad u\in\H^2(\Om).
\end{equation*}
By $\lf\cdot\rf$ we denote an integer part of a number.

Now we are in position to formulate our   results. The first of them is devoted to the spectrum of the operator $\Op^\e$.

\begin{theorem}\label{thEF}
All eigenvalues of the operator $\Op_\e$ are  real and the lowest eigenvalue $\l_\e$ is simple.
The eigenfunction of the operator $\Op_\e$ associated with the lowest eigenvalue $\l_\e$ can be chosen so that   it satisfies the asymptotic formula
\begin{equation}
\Psi_\e(x)=1-\chi(x)e^{\frac{\tht_1(s)\tau}{\e^2}} \sum\limits_{j=0}^{N}\e^{2j} \Phi_j(\tau\e^{-2},s)  + \Xi_{\e,N}(x),\qquad N\in\mathds{Z}_+.
\label{2.10}
\end{equation}
The symbols $\Phi_j=\Phi_j(\z,s)$ denote some polynomials in $\z$ of degree at most $2j$ with infinitely differentiable in $s$ coefficients such that $\Phi_j(0,s)=0$, $j\geqslant 1$. In particular,
\begin{equation}\label{2.12}
\Phi_0(\z,s)\equiv 1, \qquad
\Phi_1(\z,s)=\frac{\Phi(s)}{2\tht_1(s)}\z^2 - \left(\Tht_1 + \frac{\Phi(s)}{\tht_1^2(s)}\right)\z,
\qquad \Phi:= g^{-1}\nabla_s (\tht_0-\Tht_0)\cdot \nabla_s \tht_1+2\tht_1\tht_2.
\end{equation}
The error term obeys the estimates
\begin{equation}
\begin{aligned}
&\|\Xi_{\e,N}\|_{L_2(\om)}=O\Big(\e^{2N+3} e^{-\frac{\tht_{min}-\V_\om}{2\e^2}} \Big), && \|\nabla \Xi_{\e,N}\|_{L_2(\om)}=O\Big(\e^{2N+1} e^{-\frac{\tht_{min}-\V_\om}{2\e^2}} \Big), \\ &\|\p^2_{xx}  \Xi_{\e,N}\|_{L_2(\om)} = O\Big(\e^{2N-1} e^{-\frac{\tht_{min}-\V_\om}{2\e^2}}\Big),\qquad &&\|  \Xi_{\e,N}\|_{C(\overline{\om})}=O\Big(\e^{2N+2} e^{-\frac{\tht_{min}-\V_\om}{2\e^2}}\Big),
\end{aligned}\label{2.14a}
\end{equation}
for each subdomain $\om\subset\Om$ and
\begin{align}\label{2.15b}
&
\begin{aligned}
&\|\Xi_{\e,N}\|_{L_2(\Om)}=O(\e^{2N+3}), &&\quad \|\nabla\Xi_{\e,N}\|_{L_2(\Om)}=O(\e^{2N+1}), \\ &\|\p^2_{xx}\Xi_{\e,N}\|_{L_2(\Om)} = O(\e^{2N-1}), && \quad \|\Xi_{\e,N}\|_{C(\overline{\Om})}=O(\e^{2N+2}),
\end{aligned}
\\
&\begin{aligned}
&\|e^{-\frac{V}{2\e^2}}\Xi_{\e,N}\|_{L_2(\om)}=O\Big(\e^{2N+3} e^{-\frac{\tht_{min} }{2\e^2}}\Big), && \|\nabla e^{-\frac{V}{2\e^2}}\Xi_{\e,N}\|_{L_2(\om)}=O\Big(\e^{2N+1} e^{-\frac{\tht_{min} }{2\e^2}}\Big), \\ &\|\p^2_{xx}  e^{-\frac{V}{2\e^2}} \Xi_{\e,N}\|_{L_2(\om)} = O\Big(\e^{2N-1}e^{-\frac{\tht_{min} }{2\e^2}}\Big),\qquad &&\|  e^{-\frac{V}{2\e^2}} \Xi_{\e,N}\|_{C(\overline{\om})}=O\Big(\e^{2N+2} e^{-\frac{\tht_{min} }{2\e^2}}\Big).
\end{aligned}\label{2.15c}
\end{align}
\end{theorem}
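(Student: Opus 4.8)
We sketch the proof; it combines the spectral theory of $\Op_\e$ with a boundary-layer construction and its justification.

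\emph{Step 1: reality of the spectrum and simplicity of $\l_\e$.} Consider the differential expression of $\Op_\e$ with the same domain $\Ho^2(\Om)$ in the weighted space $L_{2,\e}:=L_2(\Om,e^{-V/\e^2}\di x)$. From the identity $-\e^2\D u+\nabla V\cdot\nabla u=-\e^2e^{V/\e^2}\nabla\cdot(e^{-V/\e^2}\nabla u)$ and an integration by parts (legitimate on $\Ho^2(\Om)$ by \eqref{2.1a}) one sees that there $\Op_\e$ is self-adjoint and non-negative with the closed form $\e^2\int_\Om e^{-V/\e^2}|\nabla u|^2\,\di x$ on $\Ho^1(\Om)$; equivalently, $u\mapsto e^{-V/(2\e^2)}u$ conjugates it unitarily to the Dirichlet Schrödinger operator $\mathcal L_\e:=-\e^2\D+\frac{1}{4\e^2}|\nabla V|^2-\frac12\D V$ in $L_2(\Om)$. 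Hence all eigenvalues of $\Op_\e$ are real, and $\l_\e>0$ since $\mathds 1\notin\Ho^1(\Om)$. A minimiser of the Rayleigh quotient may be taken non-negative (replace $u$ by $|u|$), and by the strong maximum principle for $-\e^2\D+\nabla V\cdot\nabla$ any non-negative eigenfunction at $\l_\e$ is strictly positive in $\Om$, so two linearly independent such eigenfunctions could not be $L_{2,\e}$-orthogonal; thus $\l_\e$ is simple, and we let $\Psi_\e>0$ be the associated eigenfunction. Testing the Rayleigh quotient with $\mathds 1-\chi$ and applying the Laplace method to $\int_\Om e^{-V/\e^2}\di x$ (whose mass sits at the origin, where $V(0)=0$) gives $0<\l_\e\le C\e^{-M}e^{-\tht_{min}/\e^2}$, while a weighted Poincar\'e inequality on $\Om$ — obtained by a parabolic rescaling near the (possibly degenerate) minimum, where \eqref{2.1a} with $d^kV(0)\ne0$ enters, together with compactness away from it — yields $\l_\e^{(2)}\ge c\,\e^{2-2/k}$ for the second eigenvalue; in particular the spectral gap is bounded below by a fixed power of $\e$ and is incomparably larger than the exponentially small $\l_\e$.

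\emph{Step 2: the boundary-layer ansatz.} In the collar $\{0\le\tau\le\tau_0\}$, where the metric reads $\di\tau^2+g_{ij}(s,\tau)\di s^i\di s^j$, one has
\begin{equation*}
\Op_\e=-\e^2\bigl(\p_\tau^2+\vk(s,\tau)\p_\tau+\D_{g(\tau)}\bigr)+\frac{\p V}{\p\tau}\p_\tau+g^{-1}(\tau)\nabla_s V\cdot\nabla_s,\qquad \vk:=\p_\tau\ln\sqrt{\det g(\cdot,\tau)},
\end{equation*}
with $\D_{g(\tau)}$ the Laplace--Beltrami operator of the level surface $\{\tau=\mathrm{const}\}$. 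Introducing the stretched variable $\z:=\tau\e^{-2}$ and Taylor-expanding all coefficients in $\tau=\e^2\z$ gives an expansion $\Op_\e=\sum_{m\ge0}\e^{2(m-1)}\cL_m$ in which $\cL_0=-\p_\z^2+\tht_1(s)\p_\z$, and for $m\ge1$ the operator $\cL_m$ is differential in $(\z,s)$ (first order in $\z$, second in $s$) and raises the $\z$-degree of $e^{\tht_1\z}\cdot(\text{polynomial})$ by at most $m$; $\cL_1$ is read off from \eqref{2.3}. We look for $\Psi_\e^{(N)}(x):=1-\chi(x)\sum_{j=0}^N\e^{2j}e^{\tht_1(s)\z}\Phi_j(\z,s)$ with $\Phi_j$ polynomial in $\z$. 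As $\Op_\e\mathds 1=0$, equating the coefficients of $\e^{2m}$ for $m=0,\dots,N$ reduces the construction to the recurrence $\cL_0\bigl(e^{\tht_1\z}\Phi_m\bigr)=-\sum_{k=1}^m\cL_k\bigl(e^{\tht_1\z}\Phi_{m-k}\bigr)$. Granting the inductive bound $\deg_\z\Phi_i\le 2i$ for $i<m$, the right-hand side is $e^{\tht_1\z}$ times a polynomial in $\z$, smooth in $s$, of degree $\le 2m-1$; since $\cL_0\bigl(e^{\tht_1\z}q\bigr)=-e^{\tht_1\z}(q''+\tht_1q')$ and $\tht_1\le-c_2<0$, this equation has a polynomial solution $\Phi_m$ of degree $\le 2m$, made unique by the requirement $\Phi_m(0,s)=0$ (which preserves the Dirichlet condition $\Psi_\e^{(N)}|_{\tau=0}=0$, using $\Phi_0(0,s)=1$) and by discarding the non-polynomial homogeneous solution $e^{-\tht_1\z}$; its coefficients are $C^\infty$ in $s$ by \eqref{2.1a}. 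For $m=0$ these requirements force $\Phi_0\equiv1$, and the computation for $m=1$ with $\cL_1$ from \eqref{2.3} yields \eqref{2.12}.

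\emph{Step 3: justification.} By construction $\Psi_\e^{(N)}\in\Dom(\Op_\e)$ and $\Op_\e\Psi_\e^{(N)}=\cR_{\e,N}$, where $\cR_{\e,N}$ is the sum of the first uncancelled collar terms — of the form $\e^{2N}e^{\tht_1(s)\z}\cdot(\text{polynomial in }\z,\ \text{smooth in }s)$ — and of the commutators $[\Op_\e,\chi](\cdots)$, which are supported in $\{\d/3\le\tau\le2\d/3\}$ and hence $O(e^{-c/\e^2})$ because $e^{\tht_1\tau/\e^2}$ is exponentially small there (recall $\d<\tht_{min}(4c_2)^{-1}$). Passing to $\tau=\e^2\z$ (Jacobian $\sqrt{\det g(s,\e^2\z)}\,\e^2\,\di\z\,\di s$) and using $V=\tht_0(s)+\tht_1(s)\tau+\cdots$ with $\tht_0\ge\tht_{min}$, all norms of $\cR_{\e,N}$ reduce to convergent integrals $\int_0^\infty\z^le^{2\tht_1\z}\,\di\z$ together with the Laplace asymptotics of $\int_{\p\Om}e^{-\tht_0(s)/\e^2}\,\di s$, giving $\|e^{-V/(2\e^2)}\cR_{\e,N}\|_{L_2(\Om)}=O(\e^{2N+1}e^{-\tht_{min}/(2\e^2)})$, with the same structure (and an extra power $\e^{2J}$) for the ansatz truncated at $\Phi_{N+J}$. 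We now normalise $\Psi_\e$ so that $\Xi_{\e,N}:=\Psi_\e-\Psi_\e^{(N)}$ is $L_{2,\e}$-orthogonal to $\Psi_\e$ (the associated factor, close to $1$, does not affect \eqref{2.10}); then $\Op_\e\Xi_{\e,N}=\l_\e\Psi_\e-\cR_{\e,N}$, and using $\Op_\e\Psi_\e^{(N)}=\cR_{\e,N}$ one checks this right-hand side is again $L_{2,\e}$-orthogonal to $\Psi_\e$, whence $\|\Xi_{\e,N}\|_{L_{2,\e}}\le(\l_\e^{(2)})^{-1}\bigl(\l_\e\|\Psi_\e\|_{L_{2,\e}}+\|\cR_{\e,N}\|_{L_{2,\e}}\bigr)$. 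Since $\l_\e$ is exponentially negligible and $(\l_\e^{(2)})^{-1}=O(\e^{2/k-2})$, applying this to the ansatz carried a bounded number of extra steps (so that its residual absorbs the factor $(\l_\e^{(2)})^{-1}$) and then subtracting those extra terms — each of $L_{2,\e}$-norm $\le C\e^{2N+3}e^{-\tht_{min}/(2\e^2)}$ by the same integrals — gives $\|\Xi_{\e,N}\|_{L_{2,\e}}=O(\e^{2N+3}e^{-\tht_{min}/(2\e^2)})$. The bounds for $\nabla\Xi_{\e,N}$ and $\p^2_{xx}\Xi_{\e,N}$ follow by applying the a priori elliptic estimate $\|\p^2_{xx}w\|\le C(\e^{-2}\|\mathcal L_\e w\|+\e^{-4}\|w\|)$, $w\in\Ho^2(\Om)$ (each differentiation costing a power $\e^{-2}$, whence the exponents $2N+1$ and $2N-1$), to $w=e^{-V/(2\e^2)}\Xi_{\e,N}$, together with interpolation, and the $C(\overline\om)$-bound from Sobolev embedding; this is \eqref{2.15c}, and \eqref{2.14a} follows at once on $\om$ by multiplying by $e^{V/(2\e^2)}\le e^{\V_\om/(2\e^2)}$. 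Finally, combining the weighted estimate on a fixed compact subdomain with a separate boundary-layer estimate for $\Op_\e$ in the collar — where $\Op_\e$ acts like $\e^{-2}\cL_0$ and $\Xi_{\e,N}$ vanishes on $\p\Om$ — propagates the bounds up to $\p\Om$ and yields $\|\Xi_{\e,N}\|_{C(\overline\Om)}=O(\e^{2N+2})$; since $\Xi_{\e,N}$ is exponentially small outside an $O(\e^2)$-neighbourhood of $\p\Om$, its unweighted norms over $\Om$ then satisfy \eqref{2.15b}.

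\emph{Main obstacle.} The crux is the justification of Step 3: converting the formal quasimode into estimates with the exact exponents in every norm, despite the near-zero eigenvalue $\l_\e$ and the possibly small spectral gap for a degenerate well. This is what forces one to overshoot the boundary-layer expansion by a bounded number of terms, to use weighted elliptic a priori estimates for the higher-order norms, and to glue the interior/weighted bound to a collar boundary-layer estimate in order to reach the uniform $C(\overline\Om)$ bound — from which, at last, the unweighted global estimates follow.
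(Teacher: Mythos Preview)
Your proposal is correct and follows essentially the same strategy as the paper: conjugation to the self-adjoint Schr\"odinger operator $-\e^2\D+\tfrac{1}{4\e^2}|\nabla V|^2-\tfrac12\D V$ on $\Ho^2(\Om)$, a boundary-layer construction via the recurrence on $\cL_0(e^{\tht_1\z}\Phi_m)$, and justification by projecting onto the orthogonal complement of the ground state, using the spectral gap together with overshooting the ansatz by a bounded number of terms.

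Two minor points are worth noting. First, your spectral-gap bound $\l_\e^{(2)}\geqslant c\,\e^{2-2/k}$ has the wrong exponent; the semiclassical scaling of $-\e^2\D+\tfrac{1}{4\e^2}|\nabla V|^2$ near a degenerate minimum of order $k$ gives $\l_\e^{(2)}\geqslant c\,\e^{2-4/k}$ (this is what the paper proves via \cite{QC}). Since you only need a polynomial lower bound to be absorbed by overshooting, this does not affect your argument, but the stated exponent is off. Second, the one place where the paper is more concrete than your sketch is the global $C(\overline{\Om})$ estimate: rather than an ad hoc ``boundary-layer estimate in the collar'', the paper's Lemma~\ref{lm:Max} obtains the pointwise bound by a \emph{maximum principle} applied to the conjugated operator (and to $(-\e^2\D+(1-2\e^2)\nabla V\cdot\nabla+\cdots)$) on $\overline{\Om\setminus B_*}$, where the potential is strictly positive, combined with local higher-order Sobolev estimates on $B_{\rho_1}$ via bootstrapping. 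Your Sobolev-embedding route works for the local $C(\overline{\om})$ bounds, but to reach $C(\overline{\Om})$ uniformly in $\e$ you would in practice need exactly this maximum-principle step; your collar argument as stated is not quite a proof.
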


The next theorem describes the asymptotic behavior of the lowest eigenvalue.

\begin{theorem}\label{thEV}
The lowest eigenvalue $\l_\e$ of the operator $\Op_\e$ satisfies the identity
\begin{equation}\label{5.14a}
\l_\e=\frac{\e^2\int\limits_{\p\Om} e^{-\frac{V}{\e^2}} \frac{\p\Psi_\e}{\p\tau}\di s} {\int\limits_{\Om} e^{-\frac{V}{\e^2}} \Psi_\e\di x}
\end{equation}
and the asymptotic formula
\begin{equation}\label{2.9}
\l_\e=\e^{-\frac{2n}{k}-2}\sum\limits_{j=0}^{N} \e^{\frac{2j}{k}} M_j\left(\mu_0(\e),\ldots, \mu_{\lf\frac{j}{k}\rf}(\e)
\right) +O\Big(\e^{\frac{2(N+1-n)}{k}-2} e^{-\frac{\tht_{min}}{\e^2}}\Big),\qquad N\in\mathds{Z}_+,
\end{equation}
where $M_j$ are some linear combinations with fixed coefficients, while the functions $\mu_j(\e)$ are defined by the formulae
\begin{equation}
\label{5.23}
\mu_0(\e):= -\int\limits_{\p\Om} e^{-\frac{\tht(s)}{\e^2}} \tht_1(s) \di s,\qquad \mu_j(\e):=-\int\limits_{\p\Om} e^{-\frac{\tht(s)}{\e^2}} \frac{\p \Phi_j}{\p\z}(0,s)\di s,\qquad j\geqslant 1,
\end{equation}
and obey the estimates
\begin{equation}
\mu_j(\e)=O\big(e^{-\frac{\tht_{min}}{\e^2}}\big),\qquad j\geqslant 0.\label{5.24}
\end{equation}

 In particular,
\begin{align}
&\label{5.28}
M_0(\mu_0)=\frac{\mu_0}{ \int\limits_{\mathds{R}^n} e^{-V_0}\di x},
\qquad M_1(\mu_0)=-\frac{\mu_0}{ \left( \int\limits_{\mathds{R}^n} e^{-V_0}\di x\right)^2} \int\limits_{\mathds{R}^n} V_1e^{-V_0}\di x,
\\
&\label{5.28a}
V_i(x):=\frac{1}{(k+i)!}\sum\limits_{
\substack{p\in\mathds{Z}_+^n,\\ |p|=k+i}}\frac{\p^p V}{\p x^p}(0) x^\a,\qquad V_0(x)>0,\quad x\ne0.
\end{align}
\end{theorem}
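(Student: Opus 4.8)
The identity (\ref{5.14a}) follows by writing $\Op_\e$ in divergence form with the weight $e^{-V/\e^2}$: a direct computation gives $\Op_\e u=-\e^2 e^{V/\e^2}\dvr\big(e^{-V/\e^2}\nabla u\big)$, so the eigenvalue equation $\Op_\e\Psi_\e=\l_\e\Psi_\e$ is equivalent to $-\e^2\dvr\big(e^{-V/\e^2}\nabla\Psi_\e\big)=\l_\e e^{-V/\e^2}\Psi_\e$. Integrating over $\Om$ and applying the divergence theorem — the boundary term carries $\frac{\p\Psi_\e}{\p\tau}$ because $\tau$ is the coordinate along the inward normal $\nu$ — yields $\e^2\int_{\p\Om}e^{-V/\e^2}\frac{\p\Psi_\e}{\p\tau}\di s=\l_\e\int_\Om e^{-V/\e^2}\Psi_\e\di x$, that is (\ref{5.14a}); here the denominator is nonzero since the ground state $\Psi_\e$ may be taken of one sign, while reality and simplicity of $\l_\e$ are supplied by Theorem~\ref{thEF}. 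It remains to substitute (\ref{2.10}) into the two integrals and expand them separately.

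\emph{The numerator.} On $\p\Om$ one has $\tau=0$, $\chi\equiv1$, $V=\tht_0(s)$; differentiating the boundary-layer term of (\ref{2.10}) in $\tau$ and using $\Phi_0\equiv1$, $\Phi_j(0,s)=0$ for $j\geqslant1$, gives
\[
\frac{\p\Psi_\e}{\p\tau}\Big|_{\p\Om}=-\frac{\tht_1(s)}{\e^2}-\sum_{j=1}^{N}\e^{2j-2}\frac{\p\Phi_j}{\p\z}(0,s)+\frac{\p\Xi_{\e,N}}{\p\tau}\Big|_{\p\Om}.
\]
Multiplying by $\e^2 e^{-\tht_0(s)/\e^2}$ and integrating over $\p\Om$, the prefactor $\e^2$ cancels the $\e^{-2}$ coming from differentiating the boundary layer, and by the very definition (\ref{5.23}) the main part equals $\sum_{j=0}^{N}\e^{2j}\mu_j(\e)$; the $\Xi_{\e,N}$-term is bounded by a trace inequality combined with the $W_2^2$-estimate in (\ref{2.15b}), which gives $O\big(\e^{2N+1}e^{-\tht_{min}/\e^2}\big)$. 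Estimate (\ref{5.24}) is then immediate, since $\tht_0(s)\geqslant\tht_{min}$ on $\p\Om$ and the remaining integrands in (\ref{5.23}) are continuous on the compact set $\p\Om$.

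\emph{The denominator.} The eigenfunction $\Psi_\e$ is bounded on $\overline\Om$ and, by (\ref{2.10}), equals $1$ up to a term smaller than any power of $\e$ on every compact subset of $\Om$, while assumption (\ref{2.6a}) makes $e^{-V/\e^2}$ exponentially small outside any fixed neighbourhood of the origin, so $\int_\Om e^{-V/\e^2}\Psi_\e\di x=\int_\Om e^{-V/\e^2}\di x\,\big(1+O(e^{-c_3/\e^2})\big)$ for some $c_3>0$, the correction being made precise by (\ref{2.15c}) when needed. The integral $\int_\Om e^{-V/\e^2}\di x$ is a Laplace integral whose mass sits in $\{|x|\lesssim\e^{2/k}\}$: restricting to a fixed small ball where $V\in C^\infty$ by (\ref{2.1a}), writing $V=\sum_{i\geqslant0}V_i$ with $V_i$ homogeneous of degree $k+i$ (so $V_0>0$ away from the origin by (\ref{2.6a}), whence $\int_{\mathds{R}^n}e^{-V_0}\di x<\infty$), substituting $x=\e^{2/k}y$ and expanding $e^{-\sum_{i\geqslant1}\e^{2i/k}V_i(y)}$ in powers of $\e^{2/k}$, one obtains a complete expansion
\[
\int_\Om e^{-V/\e^2}\di x=\e^{2n/k}\sum_{i=0}^{N}\e^{2i/k}C_i+O\big(\e^{(2N+2+2n)/k}\big),\qquad C_0=\int_{\mathds{R}^n}e^{-V_0}\di x,\quad C_1=-\int_{\mathds{R}^n}V_1 e^{-V_0}\di x,
\]
with the remaining $C_i$ produced by the usual Laplace/Watson recipe; the tail $\{|x|\gtrsim\e^{(2-\eta)/k}\}$ is discarded via $V(x)/\e^2\geqslant c_4(|x|\e^{-2/k})^{k}$.

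\emph{Combining the two.} Dividing, the numerator is $\sum_j\e^{2j}\mu_j(\e)$ up to a relatively negligible remainder, and $\e^{2n/k}\big/\int_\Om e^{-V/\e^2}\Psi_\e\di x$ is $1\big/\sum_i\e^{2i/k}C_i$ up to a relatively negligible remainder, i.e.\ a power series in $\e^{2/k}$ with $\e$-independent coefficients $d_i$ ($d_0=C_0^{-1}$, $d_1=-C_1C_0^{-2}$, and so on). Multiplying the two series and regrouping the powers $\e^{2j}\cdot\e^{2i/k}=\e^{2(jk+i)/k}$, the coefficient at $\e^{2m/k}$ involves only $j\leqslant\lf m/k\rf$, which is exactly why $M_m$ depends on $\mu_0(\e),\dots,\mu_{\lf m/k\rf}(\e)$ and does so linearly with fixed coefficients (namely $d_{m-jk}$); in particular $M_0(\mu_0)=d_0\mu_0$ and $M_1(\mu_0)=d_1\mu_0$, from which (\ref{5.28}) follows after inserting the values of $d_0$ and $d_1$, and the remainder in (\ref{2.9}) results from combining the two remainders. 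The step I expect to be the main obstacle is the rigorous Laplace expansion of $\int_\Om e^{-V/\e^2}\di x$ at the \emph{possibly degenerate} minimum with a controlled remainder: the rescaling $x\mapsto\e^{2/k}y$ is elementary, but the choice of localization radius, the uniform control of the Taylor remainder of $V/\e^2$, the domination of the rescaled integrand by an integrable function and the summation of the tails all require care, and it is here that the hypotheses $V\in C^\infty(\overline{B_{\rho_1}})$, $d^kV(0)\ne0$ and $V>0$ on $\overline\Om\setminus\{0\}$ are used together; a secondary technical point is verifying that the $\Xi_{\e,N}$-contributions to both integrals are of strictly lower order than every retained term, which is precisely what the weighted estimates (\ref{2.15c}) of Theorem~\ref{thEF} are designed for.
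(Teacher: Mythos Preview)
Your proposal is correct and follows essentially the same route as the paper. The only cosmetic difference is that the paper derives identity (\ref{5.14a}) by passing to the self-adjoint conjugate $\Op^\e=\cE_\e^{-1}\Op_\e\cE_\e$, multiplying its eigenvalue equation by $\psi_\infty^\e=e^{-V/(2\e^2)}$ and integrating by parts twice, whereas you use the equivalent divergence-form identity $\Op_\e u=-\e^2 e^{V/\e^2}\dvr\!\big(e^{-V/\e^2}\nabla u\big)$ directly; both lead to the same formula, and the subsequent treatment of numerator (boundary-layer differentiation plus a trace/$\H^2$ bound on the remainder) and denominator (Laplace expansion of $\int_\Om e^{-V/\e^2}\di x$ at the degenerate minimum via the rescaling $x=\e^{2/k}y$) mirrors the paper's Sections~5--6 step for step.
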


Our  main result states the solvability of equation (\ref{3.2}) and provides the complete asymptotic expansion for the solution.

\begin{theorem}\label{thEq}
Equation (\ref{3.2}) is uniquely solvable for sufficiently small $\e$. Its solution satisfies the asymptotic formula
\begin{align}\label{2.13}
&u_\e(x)=K_\e\Psi_\e(x)+\chi(x) e^{ \frac{\tht_1(s)\tau}{\e^2}} \sum\limits_{j=1}^{N}\e^{2j} U_j(\tau\e^{-2},s) + S_N^\e(x),\qquad N\in\mathds{N},
\end{align}
where the error term obeys the estimates
\begin{equation}\label{2.14}
\begin{aligned}
&\|S_N^\e\|_{L_2(\om)}=O\Big(\e^{2N+3} e^{-\frac{\tht_{min}-\V_\om}{2\e^2}} \Big), && \|\nabla S_N^\e\|_{L_2(\om)}=O\Big(\e^{2N+1} e^{-\frac{\tht_{min}-\V_\om}{2\e^2}} \Big),
\\
&\|\p^2_{xx}  S_N^\e\|_{L_2(\om)} = O\Big(\e^{2N-1} e^{-\frac{\tht_{min}-\V_\om}{2\e^2}}\Big),\qquad  &&\| S_N^\e\|_{C(\overline{\om})}=O\Big(\e^{2N+2} e^{-\frac{\tht_{min}-\V_\om}{2\e^2}}\Big),
\end{aligned}
\end{equation}
for each subdomain $\om\subset\Om$ and
\begin{align}\label{2.18a}
&
\begin{aligned}
&\|S_N^\e\|_{L_2(\Om)}=O(\e^{2N+3}),\hphantom{ \|\nabla S_N^\e\|_{L_2(\Om)}=O.|\e} &&\quad \|\nabla S_N^\e\|_{L_2(\Om)}=O(\e^{2N+1}), \\ &\|\p^2_{xx}S_N^\e\|_{L_2(\Om)} = O(\e^{2N-1}), && \quad \|S_N^\e\|_{C(\overline{\Om})}=O(\e^{2N+2}).
\end{aligned}
\\
&\label{2.18b}
\begin{aligned}
&\|e^{-\frac{V}{2\e^2}}S_N^\e\|_{L_2(\om)}=O\Big(\e^{2N+3}e^{-\frac{\tht_{min} }{2\e^2}}\Big), && \|\nabla e^{-\frac{V}{2\e^2}} S_N^\e\|_{L_2(\om)}=O\Big(\e^{2N+1}e^{-\frac{\tht_{min} }{2\e^2}}\Big),
\\
&\|\p^2_{xx}  e^{-\frac{V}{2\e^2}} S_N^\e\|_{L_2(\om)} = O\Big(\e^{2N-1}e^{-\frac{\tht_{min} }{2\e^2}}\Big),\qquad  &&\| e^{-\frac{V}{2\e^2}} S_N^\e\|_{C(\overline{\om})}=O\Big(\e^{2N+2}e^{-\frac{\tht_{min} }{2\e^2}}\Big).
\end{aligned}
\end{align}
The symbols $U_j(\z,s)$ denote some polynomials in $\tau$ of degree at most $2j-1$ with infinitely differentiable in $s$ coefficients satisfying the boundary condition $U_j(0,s)=0$. In particular,
\begin{equation}\label{7.16}
\begin{aligned}
U_1=&-\z\tht_1^{-1},
\\
U_2=& -\frac{\z}{18\tht_1^4}\left(2\tht_1^2\z^2-9\tht_1\z  +18 \right) \bigg(3 g^{-1}\nabla_s \tht_0\cdot \nabla_s\tht_1 + 2 \Phi+\Tht_1 \tht_1^{2} + 2\tht_2\tht_1\bigg)
\\
&
+\frac{\Tht_1\z^2}{9\tht_1^2}(\z+9\tht_1) - \frac{\z^2}{18\tht_1^3} (8\tht_2-\Phi\tht_1).
\end{aligned}
\end{equation}
The constant $K_\e$ is given by the formula
\begin{equation}\label{7.2}
K_\e=\frac{\int\limits_{\Om} e^{-\frac{V}{\e^2}}\psi_\e\di x}{\l_\e \int\limits_{\Om} e^{-\frac{V}{\e^2}}\psi_\e^2\di x}
\end{equation}
and it can be represented as
\begin{equation}\label{6.28}
K_\e=K_\e^{(exp)}+ K_\e^{(pow)},\qquad K_\e^{(exp)}:=\e^{-2} \frac{\int\limits_{\Om} e^{-\frac{V}{\e^2}}\di x}{ \int\limits_{\p\Om} e^{-\frac{\tht}{\e^2}}\frac{\p\psi_\e}{\p\tau}\di s}.
\end{equation}
The function $K_\e^{(exp)}$  possesses the asymptotic expansions
\begin{equation}\label{6.10}
K_\e^{(exp)}=\frac{\e^{\frac{2n}{k}}}{\mu_0(\e)} \left(  \sum\limits_{j=0}^{k-1} \e^{\frac{2j}{k}}  K_j^{(exp)} + \sum\limits_{j=k}^{\infty} \e^{\frac{2j}{k}}  K_j^{(exp)} \left(\frac{\mu_1(\e)}{\mu_0(\e)},\ldots,\frac{\mu_{\lf\frac{j}{k}\rf}}{\mu_0(\e)}\right)
\right),
\end{equation}
where $K_j^{(exp)}$, $j=0,\ldots,k+1$, are some constants, and $K_j^{(exp)}$, $j\geqslant k+2$, are some polynomials with fixed coefficients. In particular,
\begin{equation}\label{6.11}
K_0^{(exp)}=\int\limits_{\mathds{R}^n} e^{-V_0}\di x,\qquad K_1^{(exp)}=\int\limits_{\mathds{R}^n} V_1e^{-V_0}\di x.
\end{equation}
The function $K_\e^{(pow)}$ has the following asymptotic expansion:
\begin{equation}\label{6.15}
\begin{aligned}
K_\e^{(pow)}=&-\e^2\frac{\eta_1(\e)}{\mu_0(\e)} + \e^4\left(
\frac{\mu_1(\e)\eta_1(\e)}{\mu_0^2(\e)} - \frac{\eta_2(\e)}{\mu_0(\e)}
\right)
\\
&+ \sum\limits_{j=3}^{\infty} \e^{2j} K_j^{(pow)}
\left(\frac{\eta_1(\e)}{\mu_0(\e)},\ldots, \frac{\eta_{j+1}(\e)}{\mu_0(\e)}, \frac{\mu_1(\e)}{\mu_0(\e)},\ldots, \frac{\mu_j(\e)}{\mu_0(\e)}\right),
\end{aligned}
\end{equation}
where $K_j^{(pow)}$ are some polynomials with fixed coefficients,
\begin{equation}\label{6.13}
\eta_1(\e):=-\int\limits_{\p\Om} e^{-\frac{\tht(s)}{\e^2}}\frac{\di s }{\tht_1(s)},\qquad
\eta_j(\e):=\int\limits_{\p\Om} e^{-\frac{\tht(s)}{\e^2}} \frac{\p U_j}{\p \z}(0,s)\di s,\quad j\geqslant 2,
\end{equation}
and the relations hold:
\begin{equation}\label{6.14}
\eta_j(\e)=O\big(e^{-\frac{\tht_{min}}{\e^2}}\big),\qquad j\geqslant 0.
\end{equation}
\end{theorem}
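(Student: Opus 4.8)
The plan is to reduce the solution of the inhomogeneous problem $\Op_\e u_\e = \mathds{1}$ to the spectral information already obtained in Theorems \ref{thEF} and \ref{thEV}, combined with a direct asymptotic construction of a boundary-layer corrector. The key structural observation is that $\Op_\e$, although not self-adjoint, is unitarily equivalent via the weight $e^{-\frac{V}{2\e^2}}$ to a self-adjoint Schr\"odinger-type operator; equivalently, $\Op_\e$ is self-adjoint with respect to the inner product with weight $e^{-\frac{V}{\e^2}}$. Hence $\mathds{1}$ decomposes along the lowest eigenfunction $\Psi_\e$ and its orthogonal complement (in this weighted space). The component along $\Psi_\e$ is $\frac{\langle \mathds{1},\Psi_\e\rangle_w}{\langle \Psi_\e,\Psi_\e\rangle_w}\Psi_\e$, and applying $\Op_\e^{-1}$ to it produces exactly $K_\e\Psi_\e$ with $K_\e$ as in \eqref{7.2}, because $\Op_\e\Psi_\e=\l_\e\Psi_\e$ forces the division by $\l_\e$; this is why the formula \eqref{7.2} holds and why $K_\e$ is exponentially large, the smallness of $\l_\e$ from Theorem \ref{thEV} being the source of the blow-up.

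Next I would establish the equivalent representation \eqref{6.28}. Starting from \eqref{7.2}, integrate by parts in the numerator's defining relation: since $\Op_\e\Psi_\e=\l_\e\Psi_\e$, Green's formula in the weighted space gives $\e^2\int_{\p\Om} e^{-\frac{V}{\e^2}}\frac{\p\Psi_\e}{\p\tau}\di s = \l_\e\int_\Om e^{-\frac{V}{\e^2}}\Psi_\e\di x$, which is precisely \eqref{5.14a} from Theorem \ref{thEV}. Substituting this into \eqref{7.2} to eliminate $\l_\e$ yields $K_\e=\e^{-2}\int_\Om e^{-\frac{V}{\e^2}}\Psi_\e\di x\big/\int_{\p\Om} e^{-\frac{\tht}{\e^2}}\frac{\p\Psi_\e}{\p\tau}\di s$ up to a ratio of $\int e^{-\frac{V}{\e^2}}\Psi_\e\di x$ and $\int e^{-\frac{V}{\e^2}}\Psi_\e^2\di x$; the difference $K_\e-K_\e^{(exp)}=:K_\e^{(pow)}$ is then analysed by inserting the expansion \eqref{2.10} for $\Psi_\e$. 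The bulk Laplace-type integral $\int_\Om e^{-\frac{V}{\e^2}}\di x$ is evaluated by Laplace's method adapted to the possibly degenerate minimum: blowing up $x=\e^{2/k}y$ and using the homogeneous leading part $V_0$ from \eqref{5.28a} gives the scale $\e^{2n/k}$ and the coefficients $K_0^{(exp)}=\int_{\mathds{R}^n}e^{-V_0}\di x$, $K_1^{(exp)}=\int_{\mathds{R}^n}V_1 e^{-V_0}\di x$, etc. The boundary integrals are Laplace integrals over $\p\Om$ governed by $\tht_{min}$; these are exactly the quantities $\mu_j(\e)$, $\eta_j(\e)$ from \eqref{5.23}, \eqref{6.13}, already controlled in Theorems \ref{thEV} and \ref{thEq}. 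Dividing the two asymptotic series and collecting powers of $\e^{2/k}$ (splitting off the first $k$ terms where $\mu_0(\e)$ alone appears in the denominator) produces \eqref{6.10} and \eqref{6.15}; the relations \eqref{6.14} follow from the standard Laplace estimate since $\tht\geqslant\tht_{min}$ on $\p\Om$.

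It remains to construct the boundary-layer part $\chi(x)e^{\frac{\tht_1 \tau}{\e^2}}\sum_{j=1}^N \e^{2j} U_j(\tau\e^{-2},s)$ and to estimate the remainder $S_N^\e$. Here I would work in the stretched variable $\z=\tau\e^{-2}$ near $\p\Om$: writing $\Op_\e$ in the coordinates $(s,\tau)$ and expanding in $\e$, the leading operator annihilating the boundary discrepancy is $-\p_\z^2 - \tht_1(s)\p_\z$, whose decaying solutions carry the factor $e^{\tht_1\z}$; inserting the ansatz and matching order by order yields an explicit recursion for the polynomials $U_j$ with the normalization $U_j(0,s)=0$ (so that the Dirichlet condition is met, since $K_\e\Psi_\e$ already vanishes on $\p\Om$ together with $\Psi_\e$). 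The first two, $U_1$ and $U_2$, are computed directly, giving \eqref{7.16}. Then set $S_N^\e := u_\e - K_\e\Psi_\e - \chi e^{\tht_1\tau/\e^2}\sum_{j=1}^N\e^{2j}U_j$ and observe $\Op_\e S_N^\e$ equals a residual supported near $\p\Om$ which is $O(\e^{2N})$ in the relevant weighted norms, while $S_N^\e|_{\p\Om}=0$. The decisive a priori estimate is a weighted resolvent bound for $\Op_\e$ on the orthogonal complement of $\Psi_\e$: there the operator is invertible with norm $O(\e^{-2})$ (this is where the gap between $\l_\e$ and the rest of the spectrum, and the sign condition $\p V/\p\tau\leqslant -c_2<0$ at the boundary, are used), and together with elliptic regularity in the weighted space it upgrades the residual bound to \eqref{2.14}, \eqref{2.18a}, \eqref{2.18b}; the compact-subdomain gain of the factor $e^{-(\tht_{min}-\V_\om)/2\e^2}$ comes from the Agmon-type exponential weight $e^{-V/2\e^2}$. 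Finally, unique solvability for small $\e$ follows because $\l_\e\ne 0$ (Theorem \ref{thEV}) makes $0\notin\spec(\Op_\e)$, so $\Op_\e$ is boundedly invertible. The main obstacle I anticipate is the last step: proving the $O(\e^{-2})$ weighted resolvent estimate uniformly in $\e$ on $\Psi_\e^\perp$, and carrying the exponential Agmon weights through the elliptic second-order estimates without losing powers of $\e$ — everything else is bookkeeping on Laplace integrals and the boundary-layer recursion.
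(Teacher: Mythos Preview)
Your overall architecture matches the paper's: pass to the self-adjoint picture via the weight $e^{-V/2\e^2}$, split $u_\e=K_\e\Psi_\e+u^{\e,\bot}$ by weighted orthogonal projection (this gives \eqref{7.2} immediately), build a boundary-layer corrector for $u^{\e,\bot}$, and estimate the remainder on $\Psi_\e^\perp$ using the spectral gap plus the elliptic/maximum-principle machinery encoded in the paper's Lemma~\ref{lm:Max}. So the spine is right.

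Two points need sharpening. First, the $U_j$ are \emph{not} fixing a boundary discrepancy. Since $K_\e\Psi_\e$ already vanishes on $\p\Om$, there is none; and the leading operator is $-\p_\z^2+\tht_1\p_\z$, not $-\p_\z^2-\tht_1\p_\z$. What actually drives the recursion is the residual inhomogeneity: on $\Psi_\e^\perp$ the right-hand side becomes $\psi_\infty^\e-\tfrac{(\psi_\infty^\e,\Psi^\e)}{\|\Psi^\e\|^2}\Psi^\e$, which up to small errors equals $e^{-V/2\e^2}Q_N^\e$, i.e.\ the eigenfunction's boundary layer reappears as the source term. The paper makes this explicit as $\Op^\e u_N^\e=e^{-V/2\e^2}Q_N^\e$ and then $\cL^\e P^\e=Q^\e$, giving the recursion $-\p_\z^2 P_j+\tht_1\p_\z P_j=Q_{j-1}+\sum_i\cL_iP_{j-i-1}$ with $P_j(0,s)=0$; this is where \eqref{7.16} comes from. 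Your phrase ``matching order by order'' hides this, and without it you cannot compute $U_1,U_2$ correctly.

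Second, your route to $K_\e^{(pow)}$ does not produce \eqref{6.15}. Substituting \eqref{5.14a} into \eqref{7.2} and inserting \eqref{2.10} yields an expansion in terms of the $\Phi_j$ (the eigenfunction's layer), whereas \eqref{6.15} is stated via $\eta_j$, which are boundary integrals of $\p_\z U_j(0,s)$. The paper obtains \eqref{6.28}--\eqref{6.15} by a different device: multiply the equation $\Op^\e u^\e=\psi_\infty^\e$ by $\psi_\infty^\e$ and integrate by parts to get $\|\psi_\infty^\e\|_{L_2(\Om)}^2=\e^2\int_{\p\Om}\psi_\infty^\e\,\p_\tau u^\e\,ds$, then substitute the already-proved decomposition $u^\e=K_\e\Psi^\e+e^{-V/2\e^2}P_N^\e+e^{-V/2\e^2}S_N^\e$. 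Solving for $K_\e$ splits it exactly into $K_\e^{(exp)}$ plus boundary integrals of $\p_\tau P_N^\e$ (these are the $\eta_j$) and of $\p_\tau S_N^\e$ (the error). So the order of operations matters: you must first construct the $U_j$ layer, then derive $K_\e^{(pow)}$ from it, not the other way around.
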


Let us briefly discuss the main results. Theorems~\ref{thEV},~\ref{thEF} describe the asymptotic expansions of the lowest eigenvalue $\l_\e$ and of the associated eigenfunction. These asymptotics are quite close to the expansions obtained in \cite{HN}. Here we make only a technical extension admitting $V$ to have a degenerate minimum at zero. The main reason why we present these theorems in this section is that the eigenvalue and the eigenfunction are involved in the formulation of the main result in Theorem~\ref{thEq} and from this point of view, Theorems~\ref{thEV},~\ref{thEF} complete Theorem~\ref{thEq}. We also note that the technique used in the proof of Theorems~\ref{thEV},~\ref{thEF}  is in a sense different from the approach in \cite{HN} and our spectral problem is also simpler, since the domain is Euclidean and the function $V$ has the only minimum. An important point is that this technique then works perfectly in the proof of Theorem~\ref{thEq}. This is why, instead of adapting and extending the results and approaches of (a quite lengthy) paper \cite{HN}, we decided to provide a shorter and simpler proof of the asymptotics for the eigenvalue and the eigenfunction; for the latter, we estimate the error term in its asymptotics in a series of various norms.

Asymptotics (\ref{2.9}) for the eigenvalue $\l_\e$  in Theorem~\ref{thEV} is exponentially small, namely, each function $\mu_j$ is exponentially small and its behavior can be described by applying the Laplace method. This method implies   the asymptotics for each $\mu_j$, which is just an exponent $e^{-\frac{\tht_{min}}{\e^2}}$ times some asymptotic series in fractional powers of $\e$ and the final asymptotics for $\l_\e$ can be rewritten as
\begin{equation}\label{2.20}
\l_\e=\e^{-\frac{2n}{k}-2} e^{-\frac{\tht_{min}}{\e^2}} \sum\limits_{j=n-1}^{\infty} C_j \e^{\frac{2j}{p}}
\end{equation}
with some constants $C_j$ and a fixed $p\in\mathds{N}$. The leading term  $M_0$ as well as $\mu_0$ in (\ref{2.9}) are positive since $\tht_1$ is strictly negative by our assumptions.

The asymptotics for the associated function $\Psi_\e$  given in Theorem~\ref{thEF} is  the constant function plus a boundary layer power in $\e$. We provide three types of estimates for the error term. The first series of estimates, (\ref{2.14a}), describes how small the error term is inside $\Om$. As we see, it is exponentially small and the exponent is in fact determined by the maximal value of $V$ in $\om$.  These estimates become too rough if $\V_\om$ exceeds $\tht_{min}$. This can happen, for instance, if $\om$ contains points $x$ near the boundary $\p\Om$ obeying  $V(x)>\tht_{min}$. The smallness of $\Xi_N^\e$ at such points is ensured by the estimates in (\ref{2.15b}). These are global estimates for $\Xi_N^\e$ in the entire domain. They are no longer exponential but power in $\e$. Nevertheless, such estimates are also appropriate since asymptotics (\ref{2.10}) is power in $\e$. The third series, estimates (\ref{2.15c}), are again global estimates but with  a special weight. These estimates are again exponentially small.

The above described structure of the asymptotics of the eigenfunction becomes important once we look at the main result, Theorem~\ref{thEq}. It provides a complete asymptotic expansion for the solution of equation (\ref{3.2}). Its leading term is $K_\e\psi_\e$, where $K_\e$ is an explicitly calculated constant given by (\ref{7.2}). This leading term can be regarded as an external expansion, since the second term in (\ref{2.13}) is just a power in $\e$ boundary layer. For the error term we provide a series of estimates in various norms both on entire domain $\Om$ (see (\ref{2.18a})) and on compact subdomains (see (\ref{2.14})). These estimates are of same nature as the estimates for the error term in the asymptotics for the eigenfunction. The constant $K_\e$ consists of an exponentially large term $K_\e^{(exp)}$ and a power term $K_\e^{(pow)}$, see (\ref{2.18a}). The exponential term is given explicitly in (\ref{2.18a}) and we find its complete asymptotic expansion, see (\ref{6.10}). The behavior of $\mu_j$ has already been discussed above, so, after application of the Laplace method,  asymptotics (\ref{6.10}) can be also rewritten as
\begin{equation*}
  K_\e^{(exp)}=\e^{\frac{2n}{k}-\frac{2(n-1)}{p}}\e^{\frac{\tht_{min}}{2\e^2}} \sum\limits_{j=0}^{\infty}  C_j \e^{\frac{2j}{p}}
\end{equation*}
with some constants $C_j$, where $p$ is the same as in (\ref{2.20}). Here the constant $p$ is the same as in (\ref{2.20}) and the coefficients $C_j$ are of course different. For the power part of $K_\e$, the constant $K_\e^{(pow)}$, we provide a complete asymptotic expansion (\ref{6.15}), which   becomes power after applying of the Laplace
method:
\begin{equation*}
  K_\e^{(pow)}= \e^2\sum\limits_{j=0}^{\infty} C_j \e^{\frac{2j}{p}},
\end{equation*}
where  $p$ is the same as in (\ref{2.20}) and $C_j$ are some coefficients. In view of such structure of $K_\e$, we can state that the solution $u_\e$ involves an exponentially large term, which is $K_\e^{(exp)}\Psi_\e$. This term is given in a closed form in view of the formula for $K_\e^{(exp)}$ in  (\ref{2.18a}) and is characterized by expansions (\ref{2.10}), (\ref{6.10}). The rest of the solution is a sum of $K_\e^{(pow)}\Psi_\e$ and the boundary layer in (\ref{2.13}); for all these components we have complete asymptotic expansions.

In the proofs of Theorems~\ref{thEV},~\ref{thEF},~\ref{thEq} we provide a straightforward recurrent algorithm for determining all coefficients in the above discussed asymptotics. In particular,
the coefficients in asymptotics (\ref{2.9}) arise as the coefficients in the formal asymptotics for the quotient:
\begin{equation*}
\l_\e\sim
\frac{\sum\limits_{j=0}^{\infty}\e^{2j-2}\mu_j(\e)} {\sum\limits_{j=n}^{\infty}\e^{\frac{2j}{k}}\a_k},\qquad \int\limits_{\Om} e^{-\frac{V}{\e^2}}\di x\sim \sum\limits_{j=n}^{\infty}\e^{\frac{2j}{k}}\a_k,
\end{equation*}
where the series in denominator arises by applying the Laplace method to the mentioned integral. Similarly, the coefficients in asymptotics (\ref{6.10}), (\ref{6.15})   come from the formal asymptotics of the quotients
\begin{equation*}
K_\e^{(exp)}\sim \frac{\sum\limits_{j=n}^{\infty}\e^{\frac{2j}{k}}\a_k} {\sum\limits_{j=0}^{\infty}\e^{2j}\mu_j(\e)},\qquad
K_\e^{(pow)}\sim -\frac{\sum\limits_{j=1}^{\infty}\e^{2j-2}\eta_j(\e)} {\sum\limits_{j=0}^{\infty}\e^{2j-2}\mu_j(\e)}.
\end{equation*}
The boundary layer in (\ref{2.10}) is constructed as a formal asymptotic solution of the problem
\begin{equation*}
(-\e^2\D+\nabla V\cdot\nabla)e^{\frac{\tht_1(s)\tau}{\e^2}} \sum\limits_{j=0}^{\infty}\e^{2j} \Phi_j(\tau\e^{-2},s)=0,\qquad \sum\limits_{j=0}^{\infty}\e^{2j} \Phi_j(\tau\e^{-2},s)=1,
\end{equation*}
and a similar problem for the boundary layer in (\ref{2.13}) reads as
\begin{equation*}
(-\e^2\D+\nabla V\cdot\nabla)e^{ \frac{\tht_1(s)\tau}{\e^2}} \sum\limits_{j=1}^{\infty}\e^{2j} U_j(\tau\e^{-2},s) =e^{\frac{\tht_1(s)\tau}{\e^2}} \sum\limits_{j=0}^{\infty}\e^{2j} \Phi_j(\tau\e^{-2},s),\qquad \sum\limits_{j=1}^{\infty}\e^{2j} U_j(\tau\e^{-2},s)=0.
\end{equation*}

\section{Probabilistic interpretation}

In this section we discuss probabilistic aspects of the results in Theorems~\ref{thEV},~\ref{thEF},~\ref{thEq}. We recall that in our case $F=-\nabla V$. The mean exit time $u_\e$ is a nonnegative function and this is reflected by the structure of asymptotics (\ref{2.13}). Indeed, inside $\Om$, the eigenfunction $\Psi_\e$ is approximately $1$, while the constant $K_\e$ is positive (see formulae (\ref{6.28}), (\ref{6.10}), (\ref{6.11})) and is exponentially large. In the vicinity of $\p\Om$, the leading terms in the asymptotics for $\Psi_\e$ are $1-e^{\frac{\tht_1(s)\tau}{\e^2}}$, while the leading term of the boundary layer in (\ref{2.13}) is $-\tau\tht_1^{-1}e^{\frac{\tht_1(s)\tau}{\e^2}}$ and thanks to the presence of $K_\e$ at $\Psi_\e$, the total sum in (\ref{2.13}) is again positive. The mean exit time is exponentially large in $\e$ as it follows from  the asymptotics for $u_\e$. We can describe various properties of the exit time by studying the structure of asymptotics (\ref{2.13}).

First we discuss two examples of such properties, namely, the maximal exit time and the torsional rigidity. The former is defined as $\max\limits_{\overline{\Om}} u_\e(x)$, while the latter is the integral $\int\limits_{\Om} u_\e(x)\di x$.

We let $N=0$ in (\ref{2.10}), (\ref{2.13}) and by (\ref{2.12}), (\ref{2.15b}), (\ref{2.18a}) we obtain
\begin{equation}\label{2.21}
\big|\Psi_\e(x)-1+\chi(x)e^{\frac{\tht_1(s)\tau}{\e^2}}\big|=O(\e^2),\qquad |u_\e(x)-K_\e\Psi_\e(x)|=O(\e^2)
\end{equation}
uniformly in $\overline{\Om}$. By (\ref{2.6a}) and the definition of $\tht_1$ we also get
\begin{equation*}
\big|\chi(x)e^{\frac{\tht_1(s)\tau}{\e^2}}\big|=O(\e^2)\quad\text{as}\quad \tau\geqslant \vk(\e):=2c_2^{-1}\e^2|\ln\e|.
\end{equation*}
Hence,  the maximum of $u_\e$ is attained inside $\Om_\e:=\Om\setminus\Pi_{\vk(\e)}$, $\Pi_\vk:=\{x:\, 0<\tau<\vk\}$ and this maximum is equal to $K_\e+O(\e^2)$. This maximum coincides with the values of $u_\e$ at all other points in $\Om_\e$ up to the error $O(\e^2)$. Hence, the exit time from all points of $\Om$
outside a narrow layer $\Pi_{\vk(\e)}$ along $\p\Om$ is approximately the same and is equal to $K_\e+O(\e^2)$.

Since the boundary layers in (\ref{2.10}), (\ref{2.13})  is multiplied by the cut-off function, it is absent outside $\Pi_\d$, see the definition of $\chi$. By (\ref{2.6a}) we also have bound $V(x)\leqslant \tht_{min}-c_2\d$ outside $\Pi_\d$ and thanks to (\ref{2.14a}), (\ref{2.14}) we can improve (\ref{2.21}):
\begin{equation*}
\big|\Psi_\e(x)-1\big|= O\big(\e^N e^{-\frac{c_2\d}{\e^2}}\big),\qquad |u_\e(x)-K_\e\Psi_\e(x)|=O\big(\e^Ne^{-\frac{c_2\d}{\e^2}}\big)
\end{equation*}
uniformly in $\overline{\Om\setminus\Pi_\d}$ for each natural $N$. Hence,
\begin{equation*}
u_\e(x)=K_\e\Psi_\e(x) + O\big(\e^N e^{-\frac{c_2\d}{\e^2}}\big)= K_\e+O\big(\e^N e^{\frac{\tht_{min}-c_2\d}{\e^2}}\big)
\end{equation*}
uniformly in $\overline{\Om\setminus\Pi_\d}$, where the asymptotic behavior of $K_\e$ is due to (\ref{6.28}), (\ref{6.10}), (\ref{6.15}). The latter asymptotic formula states that inside $\overline{\Om\setminus\Pi_\d}$, the exit time is approximated by $K_\e\Psi_\e$ up to an exponentially small error; we observe that at the same time, $K_\e$ is exponentially large. Once we replace $\Psi_\e$ by $1$, the final error becomes worse due to the exponential growth of $K_\e$ but it is still exponentially small in comparison with $K_\e$.

We proceed to the torsional rigidity. First we integrate
the terms of the boundary layers in asymptotics (\ref{2.10}) employing (\ref{2.6a}) and the identities $x=\x(s)+\tau \nu(s)$, $dx=\det(I-\tau b(s))\di \tau \di s$. This gives:
\begin{align*}
\int\limits_{\Om} \e^{2j}\chi(x) e^{\frac{\tht_1(s)\tau}{\e^2}} \Phi_j(\tau\e^{-2},s)\di x=&\e^{2j} \int\limits_{\p\Om\times(0,\d)} \chi\big(\x(s)+\tau\nu(s)\big) \det\big(I-\tau b(s)\big)\Phi_j(\tau\e^{-2},s)  e^{\frac{\tht_1(s)\tau}{\e^2}}  \di\tau\di s
\\
=&\e^{2j+2} \int\limits_{0}^{\d\e^{-2}}\di \z \int\limits_{\p\Om} \chi\big(\x(s)+\e^2\z\nu(s)\big) \det\big(I-\e^2\z b(s)\big)\Phi_j(\z,s)e^{\tht_1(s)\z}\di\z\di s
\\
=&\e^{2j+2} \int\limits_{0}^{+\infty}\di \z \int\limits_{\p\Om} \det\big(I-\e^2\z b(s)\big) \Phi_j(\z,s) e^{\tht_1(s)\z}\di\z\di s+O(\e^2e^{-\frac{c_2\d}{3}\e}).
\end{align*}
Together with (\ref{2.10}), (\ref{2.15b}) this implies:
\begin{equation*}
\int\limits_{\Om} \Psi_\e(x)\di x=|\Om|-\sum\limits_{j=0}^{N}\e^{2j+2} \int\limits_{0}^{+\infty}\di \z \int\limits_{\p\Om} \det\big(I-\e^2\z b(s)\big) \Phi_j(\z,s)e^{\tht_1(s)\z}\di\z\di s + O(\e^{2N+3}).
\end{equation*}
In the same way we integrate the boundary layer in (\ref{2.13}) arriving at the final asymptotics for the torsional rigidity:
\begin{align*}
\int\limits_{\Om} u_\e(x)\di x=&K_\e\int\limits_{\Om}\Psi_\e(x)\di x + \sum\limits_{j=1}^{N} \e^{2j+2} \int\limits_{0}^{+\infty}\di \z \int\limits_{\p\Om} \det\big(I-\e^2\z b(s)\big) U_j(\z,s)e^{\tht_1(s)\z}\di\z\di s +O(\e^{2N+3})
\\
=& K_\e\left(|\Om| -  \sum\limits_{j=0}^{N}\e^{2j+2} \int\limits_{0}^{+\infty}\di \z \int\limits_{\p\Om} \det\big(I-\e^2\z b(s)\big) \Phi_j(\z,s)e^{\tht_1(s)\z}\di\z\di s\right) + O\big(\e^{2N+3+c_3}e^{\frac{\tht_{min}}{2\e^2}}\big)
\end{align*}
with some fixed $c_3>0$. The determinant in the obtained formula can be expanded into a polynomial in $\e^2\z$ of degree $n-1$.

The next characteristics is a quasi-stationary distribution for process (\ref{0.1}). This is a probability measure $\vt_\e$ such that once $\mathbf{Y}_\e(0,a)$ is distributed according $\vt_\e$, the identity
\begin{equation}\label{2.24}
\mathbb{P}\big(\mathbf{Y}_\e(t,a)\in\om:\, t<T_\e(a)\big)=\vt_\e(\om)
\end{equation}
holds for all $t$ and all Borel subsets $\om\subset \Om$, see \cite{Replica}, \cite[Eq. 6]{Nuh1}, \cite[Prop. 5]{Nuh2} and the references therein. There exists the unique quasi-distribution for process (\ref{0.1}) and it is given by
\begin{equation}\label{2.23}
d\vt_\e=\frac{e^{-\frac{V(x)}{\e^2}}\Psi_\e(x)}{\int\limits_\Om e^{-\frac{V(x)}{\e^2}}\Psi_\e(x)\di x}\di x.
\end{equation}
Theorem~\ref{thEF} allows us to describe the asymptotic behavior of this measure. Namely,  the asymptotics of the numerator in (\ref{2.23}) reads as
\begin{equation}\label{2.26}
e^{-\frac{V(x)}{\e^2}}\Psi_\e(x)=e^{-\frac{V(x)}{2\e^2}} \left( e^{-\frac{V(x)}{2\e^2}} -\chi(x)e^{-\frac{V(x)-2\tht_1(s)\tau}{2\e^2}} \sum\limits_{j=0}^{N}\e^{2j} \Phi_j(\tau\e^{-2},s)  +
e^{-\frac{V(x)}{2\e^2}}
\Xi_{\e,N}(x)\right),
\end{equation}
where the error term inside the brackets is to be estimated by (\ref{2.15c}). The asymptotics for the denominator is as follows:
\begin{equation}\label{2.25}
\int\limits_{\Om} e^{-\frac{V(x)}{\e^2}}\Psi_\e(x)\di x=\int\limits_{\Om} e^{-\frac{V(x)}{\e^2}}\di x + O\big(\e e^{-\frac{\tht_{min}}{2\e^2}}\big).
\end{equation}
As an example of application of the obtained formulae, let us find
the right hand in (\ref{2.24}). We have:
\begin{align}\label{2.27a}
&\int\limits_{\om} e^{-\frac{V(x)}{\e^2}} \Psi_\e(x)\di x=\int\limits_{\om} e^{-\frac{V(x)}{\e^2}}\di x + O\big(\e^N e^{-\frac{\tht_{min}}{2\e^2}}\big)\quad \text{for all}\quad N\in\mathds{N} && \text{if}\quad \dist(\om,\p\Om)>\tfrac{2\d}{3},
\\
&\label{2.27b}
\int\limits_{\om} e^{-\frac{V(x)}{\e^2}} \Psi_\e(x)\di x=\int\limits_{\om} e^{-\frac{V(x)}{\e^2}}\di x + O\big( e^{-\frac{\tht_{min}-c_2\dist(\om,\p\Om)}{2\e^2}}\big) && \text{if}\quad 0<\dist(\om,\p\Om)<\tfrac{2\d}{3},
\\
&\label{2.27c}
\int\limits_{\om} e^{-\frac{V(x)}{\e^2}} \Psi_\e(x)\di x=\int\limits_{\om} e^{-\frac{V(x)}{\e^2}}\di x + O\big( \e^{2-\frac{2n}{k}+\frac{n-1}{p}} e^{-\frac{1}{2\e^2}(\tht_{min}-2\min_{\overline{\Pi_\d}}V)}\big) && \text{if} \quad \dist(\om,\p\Om)=0;
\end{align}
here $p$ is the same as in (\ref{2.20}). The latter estimate is obtained by applying the technique from the proof of Lemma~\ref{lm7.3} in Section~7. Three last formulae and (\ref{2.24}), (\ref{2.23}), (\ref{2.25}) yield:
\begin{equation*}
\mathbb{P}\big(\mathbf{Y}_\e(t,a)\in\om:\, t<T_\e(a)\big)= \frac{\int\limits_{\om} e^{-\frac{V(x)}{\e^2}}\di x + O\big(\e^{c_4}e^{-\frac{c_5}{\e^2}}\big)}{\int\limits_{\Om} e^{-\frac{V(x)}{\e^2}}\di x + O\big(\e e^{-\frac{\tht_{min}}{2\e^2}}\big)},
\end{equation*}
where $c_4$, $c_5$ are some fixed positive constants and $c_5\geqslant \tfrac{\tht_{min}}{2}-\min\limits_{\overline{\Pi_\d}} V$. The asymptotics for the integrals in the obtained formula can be found by the Laplace method.

The quasi-stationary distribution is closely related with the law of $\mathbf{Y}_\e(T_\e(a),a)$ over the boundary $\p\Om$, where, we recall, $T_\e(a)$ is the exit time introduced in (\ref{0.7}). This law characterizes the probability of the first hitting the points on $\p\Om$ by the trajectories. According \cite[Prop. 6]{Nuh2}, the density of this law is given by
\begin{equation*}
d\mathbf{Y}_\e(T_\e(a),a)=\frac{\e^2}{\l_\e} \frac{ \frac{\p\ }{\p\tau} \Psi_\e e^{-\frac{V}{\e^2}}
}{\int\limits_{\Om}e^{-\frac{V}{\e^2}}\Psi_\e\di x}\di s\qquad \text{on}\quad\p\Om.
\end{equation*}
  Substituting formula (\ref{5.14a}) in the above density, we rewrite it as
\begin{equation*}
d\mathbf{Y}_\e(T_\e(a),a)=  \frac{e^{-\frac{V}{\e^2}} \frac{\p \Psi_\e }{\p\tau}
}{\int\limits_{\p\Om}e^{-\frac{V}{\e^2}} \frac{\p \Psi_\e }{\p\tau}\di s}\di s\qquad \text{on}\quad\p\Om.
\end{equation*}
Thanks to the estimates (\ref{2.15b}), asymptotics (\ref{2.10}) holds in $\H^2(\Om)$-norm and by the standard embedding theorem, we can differentiate (\ref{2.10}) with respect to $\tau$ at the boundary $\p\Om$. This gives the following asymptotics in the sense of $L_2(\p\Om)$-norm:
\begin{equation*}
\frac{\p\Psi_\e}{\p\tau}\Bigg|_{\p\Om}=-\frac{\p\ }{\p\tau} e^{\frac{\tht_1(s)\tau}{\e^2}} \sum\limits_{j=0}^{N}\e^{2j} \Phi_j(\tau\e^{-2},s) +O(\e^{2N-1})=-\e^{-2}\tht_1(s) - \sum\limits_{j=0}^{N}\e^{2j}\frac{\p\Phi_j}{\p\z}(0,s) + O(\e^{2N-1}).
\end{equation*}
Here we have also employed formula (\ref{2.12}) and the boundary condition $\Phi_j(0,s)=0$, $j\geqslant 1$. The obtained asymptotics implies:
\begin{equation}\label{2.29}
 \frac{e^{-\frac{V(x)}{\e^2}} \frac{\p \Psi_\e }{\p\tau}
}{\int\limits_{\p\Om}e^{-\frac{V}{\e^2}} \frac{\p \Psi_\e }{\p\tau}\di s}= -e^{-\frac{\tht_0}{\e^2}} \frac{  \e^{-2}\tht_1(s) + \sum\limits_{j=1}^{N} \e^{2j-2} \frac{\p\Phi_j}{\p\z}(0,s) + O(\e^{2N-1}) }{\sum\limits_{j=0}^{N} \e^{2j-2}\mu_j(\e) + O(\e^{2N-1}\e^{-\frac{\tht_{min}}{\e^2}})},\qquad x\in\p\Om,
\end{equation}
with $\mu_j$ defined in (\ref{5.23}). Dividing the asymptotic series in the right hand side, one can find easily the complete asymptotic expansion for the considered density. This density allows also to find the  expectation value $\mathbb{E}_{\vt_\e}\big(f(\mathbf{Y}_\e(T_\e(a),a))\big)$ with respect to the measure $\vt_\e$ for each $f\in L_\infty(\p\Om)$. Namely, by \cite[Eq. (15)]{Nuh1}, if $\mathbf{Y}_\e(T_\e(a),a)$ is distributed according the measure $\vt_\e$, then
\begin{equation*}
\mathbb{E}_{\vt_\e}\big(f(\mathbf{Y}_\e(T_\e(a),a))\big) = \frac{\int\limits_{\p\Om} f e^{-\frac{V}{\e^2}} \frac{\p \Psi_\e }{\p\tau}\di s
}{\int\limits_{\p\Om}e^{-\frac{V}{\e^2}} \frac{\p \Psi_\e }{\p\tau}\di s}
\end{equation*}
and by (\ref{2.29}),
\begin{equation*}
\mathbb{E}_{\vt_\e}\big(f(\mathbf{Y}_\e(T_\e(a),a))\big) = - \frac{\e^{-2}\int\limits_{\p\Om} f(s) e^{-\frac{\tht_0(s)}{\e^2}}\di s +
\sum\limits_{j=0}^{N} \e^{2j-2} \int\limits_{\p\Om} f(s) \frac{\p\Phi_j}{\p\z}(0,s) e^{-\frac{\tht_0(s)}{\e^2}}\di s
+O\big(\e^{2N-1}e^{-\frac{\tht_{min}}{\e^2}}\big)}{\sum\limits_{j=0}^{N} \e^{2j-2}\mu_j(\e) + O(\e^{2N-1}\e^{-\frac{\tht_{min}}{\e^2}})}.
\end{equation*}
Dividing the asymptotics series in the obtained quotient, one can find easily the complete asymptotic expansion for the considered expectation value.

\section{Preliminaries}

In this section we prove certain auxiliary facts, which will be employed later in proofs of Theorems~\ref{thEV},~\ref{thEF},~\ref{thEq}.
We introduce one more operator in $L_2(\Om)$ with the differential expression
\begin{equation}\label{3.3}
\Op^\e:=-\e^2\D+W_\e,\qquad W_\e(x):=\frac{1}{4\e^2}|\nabla V(x)|^2-\frac{1}{2}\D V(x),
\end{equation}
on the domain $\Dom(\Op^\e):=\Ho^2(\Om)$. This operator is self-adjoint. The associated quadratic form is defined by the formula
\begin{equation}\label{3.10}
\hf^\e(u):=\e^2\|\nabla u\|_{L_2(\Om)} + (W_\e u,u)_{L_2(\Om)}
\end{equation}
on the domain $\Dom(\hf^\e):=\Ho^1(\Om)$. It is straightforward to check that the operators $\Op_\e$ and $\Op^\e$ are related by the identity
\begin{equation}\label{3.4}
\Op^\e=\cE_\e^{-1} \Op_\e \cE_\e,
\end{equation}
where $\cE_\e$ is the operator of multiplication by $e^{\frac{V}{2\e^2}}$, that is, $\cE_\e u=e^{\frac{V}{2\e^2}} u$.
Denoting
\begin{equation}\label{3.5}
u^\e:=\cE_\e^{-1} u_\e,\qquad u_\e=\cE_\e u^\e,
\end{equation}
we rewrite equation (\ref{3.2}) as
\begin{equation}\label{3.6}
\Op^\e u^\e=\psi_\infty^\e,\qquad \psi_\infty^\e:=\cE_\e^{-1}\mathds{1},\qquad \psi_\infty^\e(x) =e^{-\frac{V(x)}{2\e^2}}.
\end{equation}
Exactly this equation will be studied in the proof of Theorem~\ref{thEq}.

Since the domain $\Om$ is bounded, the operator $\Op^\e$ has a compact resolvent and its spectrum consists of infinitely many discrete eigenvalues accumulating at infinity. Thanks to
(\ref{3.4}), the spectra of the operator $\Op_\e$ and $\Op^\e$ coincide and this proves the first statement in Theorem~\ref{thEV}.
We arrange the eigenvalues of $\Op^\e$ in the ascending order counting the multiplicities and we denote them by $\l_\e:=\l_1^\e<\l_2^\e\leqslant \l_3^\e\leqslant \ldots$

The unique solvability of equation (\ref{3.6}) is equivalent to the fact that zero is not among the eigenvalues $\l_j^\e$. This is indeed the case as we shall show in the proof of the following statement.

\begin{lemma}\label{lm3.1}
The estimates
\begin{equation*}
0<\l_\e<c_6\e^{-\frac{2n}{k}}e^{-\frac{\tht_{min}-c_2\d}{\e^2}},
\qquad \l_2^\e>c_7\e^{2-\frac{4}{k}}
\end{equation*}
hold, where $c_6$, $c_7$ are positive constants independent of $\e$.
\end{lemma}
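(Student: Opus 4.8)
\textbf{Proof proposal for Lemma~\ref{lm3.1}.}

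The plan is to obtain both bounds from variational (min–max) principles applied to the self-adjoint operator $\Op^\e$, using the equivalence (\ref{3.4}) so that $\l_\e=\l_1^\e$ and $\l_2^\e$ are the two lowest eigenvalues of $\Op^\e$, whose quadratic form is $\hf^\e$ in (\ref{3.10}) with potential $W_\e$ from (\ref{3.3}). Positivity of $\l_\e$ is the easy half: $W_\e(x)=\frac{1}{4\e^2}|\nabla V|^2-\frac12\D V$ is, for small $\e$, strictly positive away from a fixed neighbourhood $B_{\rho_1}$ of the origin (by $|\nabla V|\ge c_1$ there), and on $B_{\rho_1}$ it is bounded below by $-C$; since the Dirichlet Laplacian term $\e^2\|\nabla u\|^2$ together with a Poincar\'e inequality dominates this fixed negative contribution only for $u$ supported there, one argues instead that if $\l_\e\le 0$ then $e^{-V/2\e^2}$ would have to be orthogonal to a nonnegative ground state, which is impossible because the ground state of a Schr\"odinger operator is strictly positive and $\psi_\infty^\e=e^{-V/2\e^2}>0$. (Equivalently: $\Op_\e\mathds{1}\ge 0$ is not identically zero and $\mathds{1}>0$ on $\Om$, so by the maximum principle $0$ cannot be an eigenvalue, and a Perron–Frobenius/positivity-of-resolvent argument for $\Op_\e$ — which is a second-order elliptic operator with zero potential, hence $\Op_\e\mathds 1 = 0$, so $\l_\e>0$ follows from the strong maximum principle applied to the ground state.)

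For the upper bound on $\l_\e$ I would use the standard trial-function estimate
\[
\l_\e=\min_{0\ne u\in\Ho^1(\Om)}\frac{\hf^\e(u)}{\|u\|_{L_2(\Om)}^2}\le\frac{\hf^\e(\phi)}{\|\phi\|_{L_2(\Om)}^2}
\]
with $\phi=\cE_\e^{-1}\tilde u=e^{-V/2\e^2}\tilde u$ for a suitable smooth $\tilde u$ that equals $1$ in the bulk and is cut off near $\p\Om$ (e.g. $\tilde u=1-\chi$ or a smoothed version vanishing on $\p\Om$). Since $\hf^\e(\cE_\e^{-1}\tilde u)=(\Op_\e\tilde u,\tilde u)_{L_2(\Om,e^{-V/\e^2})}=\e^2\int_\Om|\nabla\tilde u|^2 e^{-V/\e^2}\di x$, the numerator is $\e^2$ times an integral concentrated on the support of $\nabla\tilde u$, which lies at distance $\ge\d$ from $\p\Om$ where, by (\ref{2.6a}) and (\ref{3.13a}), $V\le\tht_{min}-c_2\d$; hence the numerator is $O(\e^2 e^{-(\tht_{min}-c_2\d)/\e^2})$. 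The denominator $\int_\Om|\tilde u|^2 e^{-V/\e^2}\di x$ is bounded below by the Laplace-method asymptotics of $\int_{B_{\rho_1}}e^{-V/\e^2}\di x$, which by the degeneracy assumption $d^kV(0)\ne0$, $V_0(x)>0$ for $x\ne0$ (cf. (\ref{5.28a})) is of order $\e^{2n/k}$. Dividing gives $\l_\e\le c_6\e^{2-2n/k}e^{-(\tht_{min}-c_2\d)/\e^2}\le c_6\e^{-2n/k}e^{-(\tht_{min}-c_2\d)/\e^2}$ for small $\e$.

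The lower bound on the spectral gap, $\l_2^\e>c_7\e^{2-4/k}$, is the main obstacle. The idea is again min–max:
\[
\l_2^\e=\max_{v}\ \min\Big\{\frac{\hf^\e(u)}{\|u\|^2}:\ u\in\Ho^1(\Om),\ u\perp v\Big\},
\]
so it suffices to exhibit one function $v$ (the natural choice being $v=\psi_\infty^\e=e^{-V/2\e^2}$, which is close to the ground state) such that every $u\perp v$ satisfies $\hf^\e(u)\ge c_7\e^{2-4/k}\|u\|^2$. Decompose $\Om$ into the near-boundary region, the far region $\Om\setminus B_{\rho_1}$ away from both $\p\Om$ and $0$, and the well $B_{\rho_1}$. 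Away from the origin $W_\e\ge \tfrac{c_1^2}{4\e^2}-C\ge c\e^{-2}$ dominates, so there $\hf^\e(u)\gtrsim\e^{-2}\|u\|^2$, far stronger than needed. Inside $B_{\rho_1}$ the issue is that $W_\e$ may vanish to high order at $0$; here one rescales $x=\e^{2/k}y$, turning $\Op^\e$ restricted to the well (heuristically) into $\e^{2-4/k}(-\D_y+\frac14|\nabla V_0(y)|^2+\dots)$, a fixed operator whose first eigenvalue is $0$ (attained at $e^{-V_0/2}$) and whose second eigenvalue is a positive constant; the orthogonality $u\perp e^{-V/2\e^2}$ removes the ground-state direction, leaving a gap of size $\gtrsim\e^{2-4/k}$. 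The technical work is gluing these three regional lower bounds with cut-off functions (an IMS-type localization formula, paying $O(\e^2\times(\text{gradient of cutoff})^2)=O(\e^2)$ error terms, which is negligible against $\e^{2-4/k}$ since $k\ge2$), and controlling the cross terms from the orthogonality constraint. I expect the delicate point to be making the rescaling-in-the-well argument rigorous in the presence of the non-smooth, only-$C^2$ part of $V$ outside $B_{\rho_1}$ and $\Pi_{\tau_0}$, and verifying that the localized ground state in the well is indeed $\e^{2-4/k}$-close in form sense to the true constraint vector $\psi_\infty^\e$; this is where the assumptions (\ref{2.1a}), (\ref{2.6a}) on the structure of $V$ are used essentially.
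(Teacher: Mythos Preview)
Your upper-bound argument for $\l_\e$ is exactly the paper's: the same trial function $\cE_\e^{-1}(1-\chi)$, the same identity $\hf^\e(\cE_\e^{-1}\tilde u)=\e^2\int_\Om e^{-V/\e^2}|\nabla\tilde u|^2\di x$, and the same Laplace estimate for the denominator. Your positivity argument via the maximum principle for $\Op_\e$ applied to a positive ground state is valid once cleaned up (the sentence ``$\Op_\e\mathds 1\ge 0$ is not identically zero'' is simply false, since $\Op_\e\mathds 1=0$); the paper instead \emph{extends $V$ smoothly to all of $\mathds{R}^n$} with $V(x)=|x|^2$ at infinity, compares $\Op^\e$ with the resulting whole-space operator $\Op_\infty^\e$ by Dirichlet bracketing ($\hf^\e\geqslant\hf_\infty^\e$ after extension by zero, hence $\l_j^\e\geqslant\L_j^\e$), notes that $e^{-V/2\e^2}$ is the ground state of $\Op_\infty^\e$ with $\L_1^\e=0$, and excludes $\l_\e=0$ by unique continuation.

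The real divergence is in the gap estimate. You propose an IMS localisation on $\Om$ with rescaling $x=\e^{2/k}y$ in the well; this is a legitimate programme but, as you yourself flag, the gluing and the transfer of the orthogonality constraint to the rescaled model problem are technical. The paper sidesteps all of it: the \emph{same} bracketing inequality $\l_2^\e\geqslant\L_2^\e$ reduces everything to the second eigenvalue of the whole-space operator, and for that one invokes Simon's semiclassical result \cite{QC} (with the scaling adapted for $k>2$), giving $\liminf_{\e\to0}\e^{2-4/k}\L_2^\e=\L_2^0>0$, where $\L_2^0$ is the second eigenvalue of $-\D+\tfrac14|\nabla V_0|^2-\tfrac12\D V_0$ on $\mathds{R}^n$. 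The extension-plus-bracketing trick is the one idea you are missing; it turns both the positivity and the gap into immediate consequences of known facts about the whole-space problem, whereas your route essentially re-proves Simon's theorem by hand inside $\Om$.
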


\begin{proof}
Thanks to assumption (\ref{2.1a}), we can continue the function $V$ outside  $\Om$ so that the continuation is infinitely differentiable outside $\Om$ and also $V(x)=|x|^2$ as $|x|\geqslant \rho_2>0$
for sufficiently large $\rho_2$. Then we can also continue the potential $W_\e(x)$ outside $\Om$ defining it again by the formula in (\ref{3.3}).

By $\Op_\infty^\e$ we denote the self-adjoint operator associated with the sesquilinear form
\begin{equation*}
\hf_\infty^\e(v):=\e^2\|\nabla v\|_{L_2(\mathds{R}^n)} + (W_\e v,v)_{L_2(\mathds{R}^n)}
\end{equation*}
on the domain $\Dom(\hf_\infty^\e):=\H^1(\mathds{R}^n)\cap L_2\big(\mathds{R}^n,(1+|x|^2)\di x\big)$. This    has a compact resolvent  \cite[Ch. X\!I\!I\!I, Sect. 14, Thm. X\!I\!I\!I.67]{RS} and its spectrum   consists of discrete eigenvalues   $\L_j^\e$, $j\geqslant 1$, which we arrange in the ascending order counting the multiplicities: $\L_1^\e<\L_2^\e\leqslant \L_3^\e\leqslant\ldots$
Given a function $v\in\Dom(\hf_\Om^\e)$,  we extend it by zero outside $\Om$ and the extension belongs to $\Dom(\hf_\infty^\e)$. In view of the definition of the forms $\hf_\Om^\e$ and $\hf_\rho^\e$, we see that $\hf^\e\geqslant \hf_\infty^\e$ and by the minimax principle
\begin{equation}\label{3.9}
\L_1^\e\leqslant \l_\e,\qquad \L_2^\e\leqslant \l_2^\e.
\end{equation}

It is straightforward to check that the function
$\psi_\infty^\e$ introduced in (\ref{3.6}) with the continued potential $V$
is the eigenfunction of the operator $\Op_\infty^\e$ associated with the zero eigenvalue. This eigenfunction is positive and hence, this is the ground state and $\L_1^\e=0$. By (\ref{3.9}) this implies
that $\l_\e$ is non-negative. The eigenvalue $\l_\e$ is also non-zero. Indeed, otherwise  we continue the associated eigenfunction by zero outside   $\Om$ and this function provides the infimum of the quotient $\hf_\infty^\e(u)/\|u\|_{L_2(\mathds{R})}^2$. Then this function is also the ground state of the operator $\Op_\infty^\e$ and since it vanishes outside $\Om$,  this  contradicts the unique continuation principle. This proves the lower bound for $\l_\e$.

To prove the upper bound for $\l_\e$, we again employ the minimax principle implying that
\begin{equation}\label{3.11}
\l_\e\leqslant \frac{\hf^\e\big(\cE^{-1}(1-\chi)\big)}{\|\cE^{-1}(1-\chi)\|_{L_2(\Om)}^2}.
\end{equation}
It follows from assumptions (\ref{2.1a}), (\ref{2.6a}) that
\begin{equation}\label{3.14}
V(x)\geqslant  C|x|^k\quad\text{as}\quad |x|\leqslant \rho_1,\qquad
V(x)\leqslant \tht(s)-c_2\tau\quad \text{as}\quad 0\leqslant \tau\leqslant \d,
\end{equation}
where $C$ is some fixed positive constant independent of $x$.
Employing identity (\ref{3.4}) and  definition (\ref{3.10}), (\ref{3.13a}) of the form $\hf^\e$ and the cut-off function $\chi$,   we straightforwardly check
\begin{align*}
\hf^\e\big(\cE^{-1}(1-\chi)\big)= \big(\cE_\e^{-1}\Op_\e(1-\chi),\cE^{-1}(1-\chi)\big)_{L_2(\Om)}
=\int\limits_{\Pi_{\tau_0}} e^{-\frac{V(x)}{\e^2}}(\e^2\D\chi(x)-\nabla V(x)\cdot\nabla\chi (x))\big(1-\chi(x)\big)\di x,
\end{align*}
$\Pi_{\tau_0}=\{x:\,0<\tau<\tau_0\}$, and by   (\ref{2.1a}), (\ref{3.14})  we get the estimates
\begin{equation*}
\hf^\e\big(\cE^{-1}(1-\chi)\big)=O\Big(e^{-\frac{\tht_{min}-c_2\d }{\e^2}}\Big),\qquad \| \cE_\e^{-1}(1-\chi)\|_{L_2(\Om)}^2 \geqslant\int\limits_{B_{\rho_1}} e^{-\frac{C|x|^k}{\e^2}}\di x
=\e^\frac{2n}{k}  \int\limits_{0}^{\rho_1\e^{-\frac{2}{k}}} e^{-C|y|^k}\di y\geqslant C\e^{\frac{2n}{k}},
\end{equation*}
where $C$ denotes inessential positive constants independent of $\e$ and $x$. These estimate  and (\ref{3.11}) lead us to the needed upper bound for $\l_\e$.

We proceed to proving the  lower bound for the second eigenvalue. We first consider the case $k=2$, and we apply the result of \cite[Thm. 3.2]{QC} to the operator $\Op_\infty^\e$. In view of (\ref{2.1a}), (\ref{2.6a})
we immediately get that
\begin{equation}\label{3.13}
\liminf\limits_{\e\to+0} \e^{2-\frac{4}{k}}\L_2^\e=\L_2^0,
\end{equation}
where $\L_2^0$ is the second eigenvalue of a self-adjoint operator
$\Op_0$ associated with the quadratic form
\begin{equation*}
\hf_0(u):=\|\nabla u\|_{L_2(\mathds{R}^n)}^2 + (W_0 u,u)_{L_2(\mathds{R}^n)},
\qquad  W_0(x):=\frac{|\nabla V_0(x)|^2}{4}-\frac{\D V_0(x)}{2},
\end{equation*}
in $L_2\big(\mathds{R}^n,(1+|x|^{2k-2})\di x\big)$ on the domain $\Dom(\hf_0):=\H^1(\mathds{R}^n)\cap L_2(\mathds{R}^n,(1+|x|^{2k-2})\di x)$. In case $k>2$ the same result can be proved as above by an appropriate scaling in the definition of the operator $H_a$ in \cite[Eq. (7)]{QC}.

It is easy to see that zero is the eigenvalue of the operator $\Op_0$ and the associated eigenfunction is $e^{-\frac{V_0(x)}{2}}$. Since this eigenfunction is positive, it is the ground state and zero is the lowest eigenvalue. Hence, $\L_2^0>0$ and by (\ref{3.13}) we arrive at the needed lower bound for $\l_2^\e$.
\end{proof}

The following  auxiliary lemma is one of the main tools in
 the proofs of Theorems~\ref{thEV},~\ref{thEF},~\ref{thEq}.

\begin{lemma}\label{lm:Max}
For each function $u\in C^2(\overline{\Om})\cap C^\infty(B_{\rho_1})\cap\Dom(\Op^\e)$
and  each $\l\in[0,\e^2]$ the estimates hold:
\begin{align}
&\label{3.26a}
\|\nabla u\|_{L_2(\Om)}\leqslant C\e^{-2} \left(\|(\Op^\e-\l)u\|_{L_2(\Om)}+\|u\|_{L_2(\Om)}\right),
\\
&\label{3.26b}
\|\p^2_{xx} u\|_{L_2(\Om)}\leqslant C\e^{-4} \left(\|(\Op^\e-\l)u\|_{L_2(\Om)}+\|u\|_{L_2(\Om)}\right),
\\
& \label{6.19}
  \|u\|_{\H^{m+1}(B_{\frac{\rho_1}{2}})}\leqslant
  C\e^{-2m-2} \left(\|(\Op^\e-\l)u\|_{\H^{m-1}(B_{\rho_1})} + \|(\Op^\e-\l)u\|_{L_2(\Om)}
 + \|u\|_{L_2(\Om)}\right),\quad m\in\mathds{N},
\\
\label{3.23a}
& \|u\|_{C(\overline{\Om})} \leqslant C\e^{-n-2}\left(
\|(\Op^\e-\l)u\|_{\H^{\lf\frac{n}{2}\rf-1}(B_{\rho_1})}
+
\|(\Op^\e-\l)u\|_{C(\overline{\Om})}+\|u\|_{L_2(\Om)}
\right),
\\
\label{3.23b}
& \big\|e^{\frac{V}{2\e^2}}u\big\|_{C(\overline{\Om})} \leqslant C\e^{-n-5+\frac{4}{k}} \left(\|(\Op^\e-\l)u\|_{\H^{\lf\frac{n}{2}\rf-1}(B_{\rho_1})}
+\|e^{\frac{V}{2\e^2}}(\Op^\e-\l)u\|_{C(\overline{\Om})} +\|u\|_{L_2(\Om)}\right),
\\
&\label{3.29}
\big\|\nabla e^{\frac{V}{2\e^2}}u\big\|_{L_2(\Om)} \leqslant C\e^{-n-6+\frac{4}{k}} \left(\|(\Op^\e-\l)u\|_{\H^{\lf\frac{n}{2}\rf-1}(B_{\rho_1})}
+\|e^{\frac{V}{2\e^2}}(\Op^\e-\l)u\|_{C(\overline{\Om})}+\|u\|_{L_2(\Om)}\right),
\\
&\label{3.30}
\|\p^2_{xx} e^{\frac{V}{2\e^2}} u\|_{L_2(\Om)}\leqslant C\e^{-n-8+\frac{4}{k}}\left(\|(\Op^\e-\l)u\|_{\H^{\lf\frac{n}{2}\rf-1}(B_{\rho_1})}
+\|e^{\frac{V}{2\e^2}}(\Op^\e-\l)u\|_{C(\overline{\Om})} +\|u\|_{L_2(\Om)}\right),
\end{align}
where $C$ is a constant independent of $u$, $\e$ and $\l$.
\end{lemma}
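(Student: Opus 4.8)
The plan is to derive all estimates from standard elliptic regularity for the Laplacian, combined with careful bookkeeping of the powers of $\e$ coming from the two sources of blow-up in the operator $\Op^\e=-\e^2\D+W_\e$: the small factor $\e^2$ in front of $\D$, and the fact that $W_\e$ contains the singular term $\frac{1}{4\e^2}|\nabla V|^2$. First I would prove the two interior/global $L_2$-type bounds \eqref{3.26a} and \eqref{3.26b}. For \eqref{3.26a}, I would test the equation $(\Op^\e-\l)u=f$ against $u$, integrate by parts (legitimate since $u\in\Ho^2(\Om)$), and use $\l\le\e^2$ together with $W_\e=\frac{1}{4\e^2}|\nabla V|^2-\frac12\D V\ge -\frac12\D V\ge -C$ to get $\e^2\|\nabla u\|^2_{L_2(\Om)}\le \|f\|_{L_2(\Om)}\|u\|_{L_2(\Om)}+(\l+C)\|u\|^2_{L_2(\Om)}$; a Cauchy inequality and dividing by $\e^2$ (and using $\e\le1$) yields the quadratic-in-$\e^{-2}$ bound, hence \eqref{3.26a} after taking square roots. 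For \eqref{3.26b}, I would write $-\e^2\D u=f+\l u-W_\e u$, so $\|\D u\|_{L_2(\Om)}\le\e^{-2}\big(\|f\|+\l\|u\|+\|W_\e u\|\big)$; the dangerous term is $\frac{1}{4\e^2}\||\nabla V|^2 u\|\le C\e^{-2}\|u\|$, giving $\|\D u\|_{L_2(\Om)}\le C\e^{-4}(\|f\|+\|u\|)$, and then full second-derivative control follows from the standard a priori estimate $\|\p^2_{xx}u\|_{L_2(\Om)}\le C(\|\D u\|_{L_2(\Om)}+\|u\|_{L_2(\Om)})$ for $\Ho^2(\Om)$ functions on a smooth bounded domain (this is where $u\in\Dom(\Op^\e)=\Ho^2(\Om)$ and $\p\Om\in C^\infty$ enter, the constant being absorbed since $\e\le1$).

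Next I would handle the higher interior estimate \eqref{6.19} on the ball $B_{\rho_1/2}$, where $V$ is $C^\infty$. Here I would argue by induction on $m$ using interior elliptic regularity (Caccioppoli/bootstrap on nested balls $B_{\rho_1/2}\subset\subset B_{\rho_1}$, with a chain of cutoffs). Rewriting $-\D u=\e^{-2}(f+\l u - W_\e u)$ on $B_{\rho_1}$ and differentiating, each application of elliptic regularity costs one factor $\e^{-2}$ from inverting $-\e^2\D$ and, crucially, the commutator with $W_\e$ produces derivatives of $|\nabla V|^2/(4\e^2)$ which are bounded by $C\e^{-2}$ (since $V\in C^\infty(\overline{B_{\rho_1}})$). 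Thus passing from $\H^{m-1}$-control of the data to $\H^{m+1}$-control of $u$ costs exactly $\e^{-2m-2}$, and the $\|f\|_{\H^{m-1}(B_{\rho_1})}$ term on the right together with the lower-order terms $\|f\|_{L_2(\Om)}+\|u\|_{L_2(\Om)}$ (fed in from \eqref{3.26a}, \eqref{3.26b} at the base of the induction) suffices. The sup-norm bound \eqref{3.23a} then follows from \eqref{6.19} by the Sobolev embedding $\H^{\lf n/2\rf+1}(B_{\rho_1/2})\hookrightarrow C(\overline{B_{\rho_1/2}})$ to control $u$ near the origin, plus elliptic $C^{0}$ (or $L_p$, $p$ large) estimates up to the boundary on $\Om\setminus B_{\rho_1/2}$, where $V$ is merely $C^2$ but that is enough since there $W_\e$ involves only $\e^{-2}|\nabla V|^2$ with $|\nabla V|\ge c_1>0$ bounded — the total power $\e^{-n-2}$ matching $m=\lf n/2\rf$ in \eqref{6.19}.

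The main obstacle, and the technically heaviest part, is the family of \emph{weighted} estimates \eqref{3.23b}, \eqref{3.29}, \eqref{3.30} for $e^{V/2\e^2}u$, i.e. estimates for $u_\e=\cE_\e u$ in terms of data for the conjugated operator. The clean way is to use the conjugation identity \eqref{3.4}: setting $w:=e^{V/2\e^2}u=\cE_\e u$, we have $\Op_\e w=\cE_\e\Op^\e u$, so $(\Op_\e-\l)w = \cE_\e(\Op^\e-\l)u=:\cE_\e f$, where $\Op_\e=-\e^2\D+\nabla V\cdot\nabla$ has \emph{no} singular zeroth-order term. Thus $-\e^2\D w = \cE_\e f + \l w - \nabla V\cdot\nabla w$, and I would run a parametrix/bootstrap argument for $\Op_\e$. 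The point where the extra negative powers of $\e$ (the $\e^{-n-5+4/k}$, etc.) are generated is twofold: first, the first-order drift $\nabla V\cdot\nabla w$, when moved to the right side and estimated by \eqref{3.26a}-type bounds applied to $w$ — but $w$ itself is not small, so instead one estimates $w$ near the origin via \eqref{6.19}/\eqref{3.23a} applied to $u$ and then multiplies by $\|\cE_\e\|_{C(\overline{B_{\rho_1/2}})}$; and second, away from the origin one must carry the weight $e^{V/2\e^2}$ through the elliptic estimates, which forces controlling $\nabla(e^{V/2\e^2})=\frac{\nabla V}{2\e^2}e^{V/2\e^2}$, costing $\e^{-2}$ per derivative, plus the term $\e^{-2n/k}$-type loss from $\int_{B_{\rho_1}}e^{-C|x|^k/\e^2}\,dx\asymp\e^{2n/k}$ (the volume/$L_2$-normalization near the degenerate minimum, as in the proof of Lemma 3.1). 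I would organize this as: (i) get $C(\overline\Om)$-control of $w$ — equivalently of $e^{V/2\e^2}u$ — by combining the interior bound for $u$ in a fixed ball around $0$ with a boundary-layer-type estimate for $w$ solving a non-degenerate elliptic equation with drift away from $0$, yielding \eqref{3.23b}; then (ii) \eqref{3.29} and \eqref{3.30} follow from \eqref{3.23b} by the same first- and second-derivative elliptic estimates for $\Op_\e$ applied to $w$, each derivative level costing an additional $\e^{-2}$, which accounts exactly for the drop from $-n-5+4/k$ to $-n-6+4/k$ to $-n-8+4/k$ in the three exponents. Throughout, the constants depend only on $\Om$, $V$, $n$, $k$ and not on $\e\le1$, $\l\in[0,\e^2]$ or $u$, as required; I do not expect any conceptual difficulty beyond the careful accounting of these powers.
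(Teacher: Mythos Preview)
Your treatment of \eqref{3.26a}, \eqref{3.26b} and \eqref{6.19} matches the paper's argument closely. There is, however, a genuine gap in your plan for the weighted $C$-estimate \eqref{3.23b}, and it propagates to \eqref{3.29}, \eqref{3.30}.

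You propose to control $w=e^{V/2\e^2}u$ near the origin by estimating $u$ on $B_{\rho_1/2}$ via \eqref{6.19}/\eqref{3.23a} and then multiplying by $\|\cE_\e\|_{C(\overline{B_{\rho_1/2}})}$. But $\max_{\overline{B_{\rho_1/2}}}V$ is a strictly positive constant (since $V>0$ away from $0$), so $\|\cE_\e\|_{C(\overline{B_{\rho_1/2}})}=e^{c/\e^2}$ is \emph{exponentially} large; this produces an exponential, not a polynomial, loss and cannot yield the bound $C\e^{-n-5+4/k}$. The $\e^{2n/k}$ Laplace-type volume factor you mention is an $L_2$ phenomenon and does not repair a sup-norm estimate on a fixed ball.

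The paper's device is different and is the reason the lemma is labelled ``Max''. One works on a \emph{shrinking} ball $B_*=\{|x|<\rho_4\e^{2/k}|\ln\e|^{1/k}\}$, chosen precisely so that $V\le C|x|^k$ forces $e^{V/2\e^2}\le \e^{-1}$ on $B_*$; this converts the interior bound \eqref{3.33} for $u$ into a polynomial bound for $\tilde u=e^{V/2\e^2}u$ on $\overline{B_*}$. Outside $B_*$ the equation $\Op_\e\tilde u=\tilde f$ has no sign in its zeroth order part (only $-\l$), so one passes to $e^{-V}\tilde u$, which solves $\big(-\e^2\D+(1-2\e^2)\nabla V\cdot\nabla+(1-\e^2)|\nabla V|^2-\e^2\D V-\l\big)e^{-V}\tilde u=e^{-V}\tilde f$; on $\overline{\Om\setminus B_*}$ the potential here is bounded below by $C\e^{2-4/k}$, and the classical maximum principle then gives the sup bound with a loss of exactly $\e^{4/k-2}$, producing the exponent $-n-5+4/k$. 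The same maximum-principle mechanism (with $W_\e-\l\ge C>0$ on $\overline{\Om\setminus B_{\rho_1/2}}$) is what yields \eqref{3.23a} in the paper, not $L_p$ elliptic estimates up to the boundary; your vaguer ``elliptic $C^0$'' route would need additional work to track the $\e$-powers. Once \eqref{3.23b} is in hand, \eqref{3.29} and \eqref{3.30} follow as you describe, via the energy identity and the Laplacian formula for $\tilde u$.
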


\begin{proof}
Throughout the proof, the symbol $C$ stands for various inessential constants independent of $u$, $\e$ and $\l$. Estimate  (\ref{3.26a}) is implied immediately  by the integral identity
$\e^2\|\nabla u\|_{L_2(\Om)}^2=(f,u)_{L_2(\Om)}-((W_\e-\l) u,u)_{L_2(\Om)}$ and the definition of $W_\e$. Since $\|\D u\|_{L_2(\Om)}=\e^{-2}\|(\l-W_\e)u+f\|_{L_2(\Om)}$, we can estimate first the  norm $\|\D u\|_{L_2(\Om)}$ and apply then the standard estimates in Sobolev spaces \cite[Ch. I\!I\!I, Sect. 8, Lm. 8.1]{Ld}, namely, $\|\p^2_{xx} u\|_{L_2(\Om)}\leqslant C\big(\|\D u\|_{L_2(\Om)}+\|u\|_{L_2(\Om)}\big)$. This gives (\ref{3.26b}).

Let $\chi_1=\chi_1(t)$ be an infinitely differentiable cut-off function equalling to one as $|x|<\rho_2$ and vanishing as $|x|>\rho_3$, where $\rho_2$, $\rho_3$ are some numbers obeying $\frac{\rho_1}{2}<\rho_2<\rho_3<\rho_1$. The function $u\chi_1$ solves the boundary value problem
\begin{equation}\label{3.21}
-\D u\chi_1 =f_\chi:=\e^{-2}(f-(W_\e-\l) u)\chi_1 -2\nabla u\cdot \nabla \chi_1 -  u\Delta\chi_1\quad \text{in}\quad B_{\rho_1}, \qquad u\chi_1=0\quad\text{on}\quad \p B_{\rho_1}.
\end{equation}
We differentiate this equation with respect to $x_i$, $i=1,\ldots,n$:
\begin{equation*}
-\D \frac{\p u\chi_1}{\p x_i} =\frac{\p f_{\chi}}{\p x_i},\quad \text{in}\quad B_{\rho_1}, \qquad \frac{\p u\chi_1}{\p x_i}=0\quad\text{on}\quad \p B_{\rho_1}.
\end{equation*}
Estimates (\ref{3.26a}), (\ref{3.26b}) imply:
\begin{equation*}
\|f_\chi\|_{\H^1(B_{\rho_1})} \leqslant C\left(
\e^{-2} \|f\|_{\H^1(B_{\rho_1})} + \e^{-6}\big(\|f\|_{L_2(\Om)} + \|u\|_{L_2(\Om)}\big)
\right).
\end{equation*}
Employing then  \cite[Ch. I\!I\!I, Sect. 8, Lm. 8.1]{Ld} once again, we get:
\begin{equation*}
\|u\|_{\H^3(B_{\rho_2})}\leqslant \|u\chi_1\|_{\H^2(B_{\rho_3})}\leqslant C\|f_{\chi}\|_{\H^1(B_{\rho_1})}\leqslant C\e^{-6} \left(\|f\|_{\H^1(B_{\rho_1})} + \|f\|_{L_2(\Om)}
 + \|u\|_{L_2(\Om)}\right).
\end{equation*}
Now we replace $\rho_2$, $\rho_3$ by another pair $\tilde{\rho}_2$, $\tilde{\rho}_3:=\rho_2$ and repeat the above arguing but differentiating the equation in (\ref{3.21}) twice. This gives the estimate
\begin{equation*}
\|u\|_{\H^4(B_{\tilde{\rho}_3})}\leqslant C\e^{-8}
\left(\|f\|_{\H^2(B_{\rho_1})} + \|f\|_{L_2(\Om)}
 + \|u\|_{L_2(\Om)}\right).
\end{equation*}
Repeating this procedure as many times as needed, we finally obtain (\ref{6.19}). Thanks to  the embedding $\H^{\lf\frac{n}{2}\rf+1}(B_{\frac{\rho_1}{2}})\subset C(\overline{B_{\frac{\rho_1}{2}}})$, estimate (\ref{6.19}) with $m=\lf\frac{n}{2}\rf$ implies
\begin{equation}\label{3.33}
\|u\|_{C(\overline{B_{\frac{\rho_1}{2}}})} \leqslant C\e^{-n-2}
\left(\|f\|_{\H^{\lf\frac{n}{2}\rf-1}(B_{\rho_1})} + \|f\|_{L_2(\Om)}
 + \|u\|_{L_2(\Om)}\right).
\end{equation}
By assumptions (\ref{2.1a}), (\ref{2.6a}) and Lemma~\ref{lm3.1}, for small $\e$, the potential $W_\e-\l$ is strictly positive outside $B_{\frac{\rho_1}{2}}$, namely, $W_\e-\l\geqslant C>0$ in $\overline{\Om\setminus B_{\frac{\rho_1}{2}}}$. Applying then the classical maximum principle in this domain to the differential equation for $u$ and using (\ref{3.33}), we arrive at (\ref{3.23a}).

It also follows from (\ref{2.1a}), (\ref{2.6a}) that there exists a fixed constant $\rho_4>0$ such that
\begin{equation}\label{3.36}
\begin{aligned}
&e^{\frac{V(x)}{2\e^2}}\leqslant \e^{-1}\quad \text{in}\quad  B_*:=\Big\{x: |x|<\rho_4\e^{\frac{2}{k}}|\ln\e|^\frac{1}{k}\Big\},
\\
&\e^{-2}|\nabla V(x)|^2 \geqslant C\e^{2-\frac{4}{k}}|\ln \e|^{2-\frac{2}{k}},\qquad |\D V|\leqslant   C\e^{2-\frac{4}{k}}|\ln \e|^{1-\frac{2}{k}}\quad \text{in}\ \overline{\Om\setminus B_*}.
\end{aligned}
\end{equation}
Hence, by (\ref{3.33}), the function $\tilde{u}:=e^{\frac{V}{2\e^2}}u$ satisfies the estimate
\begin{equation}\label{3.34}
\|e^{-V}\tilde{u}\|_{C(\overline{B_*})} \leqslant C
\| \tilde{u}\|_{C(\overline{B_*})}\leqslant C
\e^{-n-3} \left(\|f\|_{\H^{\lf\frac{n}{2}\rf-1}(B_{\rho_1})} + \|f\|_{L_2(\Om)}
 + \|u\|_{L_2(\Om)}\right).
\end{equation}

Due to (\ref{3.4}), the functions $\tilde{u}$ and $e^{-V}\tilde{u}$ solve the boundary value problems
\begin{align}
&(-\e^2\D+\nabla V\cdot \nabla -\l)\tilde{u}=\tilde{f}:=e^{\frac{V}{2\e^2}}f\quad\text{in}\quad\Om,\qquad \tilde{u}=0\quad\text{on}\quad\p\Om,\nonumber
\\
&
\begin{aligned}
& \Big(-\e^2\D+(1-2\e^2)\nabla V\cdot\nabla+(1-\e^2)|\nabla V|^2-\e^2\D V-\l \Big)e^{-V}\tilde{u}=e^{-V}\tilde{f} \quad\text{in}\quad\Om,
\quad 
e^{-V} \tilde{u}=0\quad\text{on}\quad\p\Om.
\end{aligned}
\label{3.37}
\end{align}
Thanks to the second and third estimates in (\ref{3.36}), the bound $\l\leqslant \e^2$ and Lemma~\ref{lm3.1}, the potential in the latter problem has a lower bound:
\begin{equation*}
(1-\e^2)|\nabla V|^2-\e^2\D V-\l \geqslant C\e^{2-\frac{4}{k}}\quad\text{in}\quad \overline{\Om\setminus B_*}
\end{equation*}
and we can apply the classical maximum principle to (\ref{3.37}). Together with (\ref{3.34}) this yields:
\begin{equation*}
\|\tilde{u}\|_{C(\overline{\Om})}\leqslant C\|e^{-V}\tilde{u}\|_{C(\overline{\Om})} \leqslant C\e^{-n-5+\frac{4}{k}} \left(\|f\|_{\H^{\lf\frac{n}{2}\rf-1}(B_{\rho_1})} + \|\tilde{f}\|_{C(\overline{\Om})}
 + \|u\|_{L_2(\Om)}\right),
\end{equation*}
which implies (\ref{3.23b}). In the same way how (\ref{3.26a}), (\ref{3.26b}) were obtained, by the above estimate and the identities
\begin{gather*}
\e^2\|\nabla\tilde{u}\|_{L_2(\Om)}^2 =(\tilde{f},\tilde{u})_{L_2(\Om)} + \l\|\tilde{u}\|_{L_2(\Om)}^2 -(\nabla V\cdot\nabla\tilde{u},\tilde{u})_{L_2(\Om)}=(\tilde{f},\tilde{u})_{L_2(\Om)} +\Big((\tfrac{1}{2}\D V+\l)\tilde{u},\tilde{u}\Big)_{L_2(\Om)},
\\
-\D\tilde{u}=\e^{-2}(\tilde{f}-\nabla V\cdot\nabla\tilde{u}+\l\tilde{u}),
\end{gather*}
we are led  to (\ref{3.29}), (\ref{3.30}).
\end{proof}

\section{Formal asymptotics for eigenfunction}

In this section we make the first step in the proofs of Theorems~\ref{thEV},~\ref{thEF}, namely, we construct a formal asymptotic expansion for the eigenfunction $\Psi_\e$  associated with the lowest eigenvalue $\l_\e$ of the operator $\Op_\e$; the function $\Psi_\e$ is not supposed to be normalized in $L_2(\Om)$.

We rewrite the eigenvalue equation $\Op_\e\Psi_\e=\l_\e\Psi_\e$
as the boundary value problem
\begin{equation}\label{3.15}
(-\e^2\D + \nabla V\cdot \nabla)\Psi_\e=\l_\e\Psi_\e\quad\text{in}\quad \Om,\qquad \Psi_\e=0\quad\text{on}\quad\p\Om,
\end{equation}
and we are going to construct an asymptotic solution to this problem.  We shall construct a power in $\e$ asymptotic solution and since
 $\l_\e$ is exponentially small by Lemma~\ref{lm3.1},
 in all our further construction we shall in fact neglect the right hand side in the equation in (\ref{3.15}).

The constant function $\mathds{1}$  solves the homogeneous equation $(-\e^2\D +\nabla V\cdot \nabla)\mathds{1}=0$ in $\Om$  but does not satisfy the homogeneous Dirichlet  condition on $\p\Om$. In view of this, we choose $\mathds{1}$ as the leading term in the asymptotics for $\Psi_\e$ by adopting the following ansatz:
 \begin{equation}\label{3.16}
\Psi_\e(x)=1 -Q^\e(\z,s),\qquad Q^\e(\z,s)=\sum\limits_{j=0}^{\infty}\e^{2j}Q_j(\z,s),\qquad \z:=\e^{-2}\tau,
\end{equation}
where  $Q^\e$ is a boundary layer introduced by the asymptotic series and   $Q_j$ are some functions to be determined.

The boundary condition in (\ref{3.15})   imply immediately those for $Q_j$:
\begin{equation}
 Q_0(0,s)=1,\qquad Q_j(0,s)=0,\qquad j\geqslant 2.\label{4.22}
\end{equation}
Since  $Q^\e$ is a boundary layer,   the functions   $Q_j$ should decay exponentially at infinity, namely,
\begin{equation}\label{4.21}
 Q_j(\z,s)=O(e^{-\g_j\z}),\qquad \z\to+\infty,
\end{equation}
uniformly in $s$ and $\g_j$ are some positive numbers.

The Laplace operator and the gradient are rewritten in terms  of the variables $(s,\tau)$ as follows:
\begin{align}\label{4.14}
&\D=\frac{\p^2\ }{\p\tau^2}  +\frac{\p\ln J}{\p\tau}\frac{\p\ }{\p\tau}
+\dvr_s  L \nabla_s+\nabla_s\ln J\cdot L\nabla_s,
\qquad
\nabla V\cdot\nabla = \frac{\p V}{\p\tau}\frac{\p\ }{\p\tau} + L\nabla_s\cdot\nabla_s,
\\
&\nonumber
L=L(\tau,s):=(I-\tau b^t)^{-1}g^{-1}(I-\tau b)^{-1},\qquad J=J(\tau,s):=\sqrt{\det g}\det(I-\tau b),
\end{align}
where $I$ stands for the unit $(n-1)\times(n-1)$ matrix,    and, we recall, $g$ is the metric tensor  on $\p\Om$ and $b_{ij}=b_{ij}(s)$ is the second fundamental form on the inward side of $\p\Om$.

As $\tau\to+0$, by the smoothness of $V$ in the vicinity of $\p\Om$ (see (\ref{2.1a})) and the smoothness of $L$ and $J$ we have the Taylor series:
\begin{equation}
\begin{aligned}
&\ln J(\tau,s)=\sum\limits_{j=0}^{\infty} \tau^j \Tht_j(s),\qquad
L(\tau,s)=\sum\limits_{j=0}^{\infty}\tau^j L_j(s),\qquad
V(x)=\sum\limits_{j=0}^{\infty} \tau^j\tht_j(s),
\qquad
\tht_j(s):=\frac{1}{j!}\frac{\p^j V}{\p\tau^j}\bigg|_{\p\Om},
\end{aligned}\label{3.17}
\end{equation}
where $\tht_j,\Tht_j\in C^\infty(\p\Om)$ are functions and $L_j\in C^\infty(\p\Om)$ are some matrices. Since the matrix $L$ is obviously Hermitian, the same holds for $L_j$.
We substitute the above formulae and  (\ref{3.16}),   (\ref{4.14}) into the equation in (\ref{3.15}), neglect the right hand side, pass to the variable $\z$, and equate the coefficients at the like powers of $\e$. This gives the equation  for  $Q^\e$:
\begin{equation}\label{4.16}
 \cL^\e Q^\e=0\qquad\text{as}\qquad \z>0,
\end{equation}
where $\cL^\e$ denotes the differential expressions
\begin{equation}\label{4.17}
\begin{aligned}
\cL^\e:=&-\e^{-2}\frac{\p^2\ }{\p\z^2}+\e^{-2} \frac{\p V}{\p\tau}(\e^2\z,s)\frac{\p\ }{\p\z}-
\frac{\p\ln J}{\p\tau}(\e^2\z,s)\frac{\p\ }{\p\z} + L(\e^2\z,s)\nabla_s V\cdot\nabla_s
\\
&-\e^2 \dvr_s L(\e^2\z,s)\nabla_s -\e^2\nabla_s \ln J(\e^2\z,s)\cdot L(\e^2\z,s)\nabla_s.
\end{aligned}
\end{equation}

We substitute series for $Q^\e$ in (\ref{3.16}) and formulae (\ref{3.17}), (\ref{4.17}) into (\ref{4.16}) and expand the result into powers of  $\e$. This determines the equations for  $Q_j$:
\begin{gather}
\label{4.18b}
-\frac{\p^2 Q_j}{\p\z^2} + \tht_1 \frac{\p Q_j}{\p \z} =G_j,\qquad \z>0,\qquad G_j(\z,s):=\sum\limits_{i=0}^{j-1}\cL_i Q_{j-i-1},
\\
\begin{aligned}
\cL_i:=&\z^{i-1}\dvr_s L_{i-1}(s)\nabla_s -
\left((i+2)\z^{i+1} \tht_{i+2} - (i+1)\z^i\Tht_{i+1}\right)\frac{\p\ }{\p\z}
 - \sum\limits_{q=0}^{i} \z^  L_q\nabla_s (\tht_{i-q}-  \Tht_{i-q-1})\cdot\nabla_s.
\end{aligned}\nonumber
\end{gather}
Here we denote $L_j:=0$, $j\leqslant -1$, $\Tht_j:=0$, $j\leqslant -1$.

Equation (\ref{4.18b}) for $Q_0$ is homogeneous and since $\tht_1$ is negative and independent of $\z$, this equation possesses the only solution obeying boundary condition  (\ref{4.22}) and decay condition (\ref{4.21}):
\begin{equation}\label{4.23}
Q_0(\z,s)=e^{\tht_1(s)\z}.
\end{equation}
Calculating now the right hand side  $Q_1$, we see immediately that this is a product of the exponential $e^{\tht_1(s)\z}$ by a polynomial in $\z$ of first degree with infinitely differentiable in $s$ coefficients:
\begin{equation*}
G_1(\z,s)=  -\left(\Phi(s)\z  +
 \Tht_1(s)\tht_1(s)\right)e^{\tht_1(s)\z},
\end{equation*}
where $\Phi$ was defined in (\ref{2.12}).
Solving then problem   (\ref{4.18b}), (\ref{4.22}), (\ref{4.21}) for $Q_1$, we see that the solutions is of the same structure:
$Q_1(\z,s)=\Phi_1(\z,s)e^{\tht_1(s)\z}$,
where $\Phi_1$ is from (\ref{2.12}). Proceeding in the same way with other problems, by induction we prove easily  the following lemma.

\begin{lemma}\label{lm4.2}
Problems   (\ref{4.18b}),  (\ref{4.22}), (\ref{4.21}) are uniquely solvable and their solutions are of the form
$Q_j(\z,s)=\Phi_j(\z,s)e^{\tht_1(s)\z}$,
where  $\Phi_j$ are some polynomials of degree at most  $2j$ with infinitely differentiable in $s$ coefficients.
\end{lemma}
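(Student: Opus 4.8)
The plan is to prove Lemma~\ref{lm4.2} by induction on $j$, the base case $j=0$ being already established: equation (\ref{4.18b}) for $Q_0$ is homogeneous, and $\Phi_0\equiv 1$ is a polynomial of degree $0=2\cdot 0$, so $Q_0=e^{\tht_1(s)\z}$ is of the claimed form. Assume now that for all $i<j$ the problems have unique solutions $Q_i=\Phi_i(\z,s)e^{\tht_1(s)\z}$ with $\Phi_i$ a polynomial in $\z$ of degree at most $2i$ whose coefficients lie in $C^\infty(\p\Om)$. The key observation is that each operator $\cL_i$ appearing in the definition of $G_j=\sum_{i=0}^{j-1}\cL_iQ_{j-i-1}$ is a differential operator in $s$ and $\z$ whose coefficients are polynomials in $\z$ (of degree at most $i+1$ in the term hitting $\p_\z$, and of degree at most $i$ in the first-order-in-$s$ terms), with $C^\infty(\p\Om)$-coefficients, coming from the convergent Taylor expansions (\ref{3.17}) of $\ln J$, $L$ and $V$ near $\p\Om$. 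Applying such an operator to $\Phi_{j-i-1}(\z,s)e^{\tht_1(s)\z}$ — where differentiation in $s$ may hit either $\Phi_{j-i-1}$ or the exponent $\tht_1(s)\z$, but in the latter case only pulls down a factor polynomial in $\z$ — yields again a polynomial in $\z$ times $e^{\tht_1(s)\z}$, with smooth-in-$s$ coefficients. A term-by-term degree count gives $\deg_\z G_j\le 2(j-1)+2=2j$, say $G_j(\z,s)=P_j(\z,s)e^{\tht_1(s)\z}$ with $\deg_\z P_j\le 2j$.

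Next I would solve the linear constant-in-$\z$ ODE (\ref{4.18b}), $-Q_j''+\tht_1Q_j'=P_j(\z,s)e^{\tht_1(s)\z}$, treating $s$ as a parameter. The homogeneous solutions are $1$ and $e^{\tht_1(s)\z}$; since $\tht_1(s)<0$, only a constant multiple of $e^{\tht_1(s)\z}$ is admissible under the decay requirement (\ref{4.21}), so the homogeneous part contributes $c(s)e^{\tht_1(s)\z}$. For the particular solution one seeks it in the form $R_j(\z,s)e^{\tht_1(s)\z}$; substituting and dividing out the exponential reduces the equation to $-R_j''-\tht_1 R_j'=P_j$ (the characteristic factor $e^{\tht_1\z}$ is a resonant mode of the operator $-\p_\z^2+\tht_1\p_\z$, since plugging $e^{\tht_1\z}$ into it gives $0$), which can be integrated twice: from $(-\p_\z^2-\tht_1\p_\z)R_j=P_j$ one gets $R_j'$ by one integration against the integrating factor $e^{\tht_1\z}$ and then $R_j$ by a further integration. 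Because $P_j$ is a polynomial in $\z$ of degree at most $2j$ and $\tht_1\ne 0$, these integrations produce a polynomial in $\z$ of degree at most $2j+1$, but the constant term of $R_j$ enters only through the homogeneous mode and the absence of a genuine degree-$(2j+1)$ contribution (no resonance with the constant mode since the right-hand side carries the exponential, not the constant) keeps $\deg_\z R_j\le 2j$ after absorbing the freedom into $c(s)$; alternatively one checks directly that the leading coefficient of $P_j\z^{-2j}$ determines the $\z^{2j}$-coefficient of $R_j$ via division by $-2j\tht_1$ (for $j\ge 1$) or adjusts by the $\z^1$-term when needed. The constants of integration and the homogeneous constant $c(s)$ are then fixed uniquely by imposing $Q_j(0,s)=0$ for $j\ge 1$ (or $Q_0(0,s)=1$) from (\ref{4.22}) and by killing any term that fails to decay — but since $e^{\tht_1(s)\z}$ already decays and we only add polynomial$\times e^{\tht_1\z}$, decay (\ref{4.21}) is automatic for any $\g_j<|\tht_1|_{\min}$, so the single boundary condition pins down the one remaining free constant. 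Setting $\Phi_j:=R_j+c$ completes the induction step.

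The only genuinely delicate point is the bookkeeping of the polynomial degree, i.e.\ verifying that no step inflates $\deg_\z$ beyond $2j$: the potential worry is the resonance of $e^{\tht_1\z}$ with the homogeneous operator, which formally raises the degree of the particular solution by one at each of the two integrations. One must observe that the \emph{first} integration does raise degree by one (turning $\deg\le 2j$ into $\deg\le 2j+1$ for $R_j'$), but the \emph{second} integration of a degree-$(2j+1)$ polynomial would give degree $2j+2$ for $R_j$ — so the claimed bound $2j$ needs the finer structure of $\cL_i$. Re-examining $\cL_i$, the $\p_\z$-coefficient is $(i+2)\z^{i+1}\tht_{i+2}-(i+1)\z^i\Tht_{i+1}$ and the $s$-derivative terms carry $\z^{i-1}$ or $\z^i$, all \emph{lower} than one might fear, and acting on $\Phi_{j-i-1}e^{\tht_1\z}$ (degree $\le 2(j-i-1)$) these produce degree $\le 2(j-i-1)+i+1=2j-i-1\le 2j-1$, strictly below $2j$; so in fact $\deg_\z G_j\le 2j-1$, which after the two integrations (total degree gain $+2$, minus the one degree eaten by resonance… ) lands exactly at $\deg_\z \Phi_j\le 2j$. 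Carrying out this degree arithmetic carefully — tracking which terms in $\cL_i$ and which in the double integration contribute — is the main obstacle, and it is the content that turns the soft induction into the sharp statement $\deg\Phi_j\le 2j$; the smoothness in $s$ of all coefficients, by contrast, is immediate since every operation (Taylor coefficients $\tht_j,\Tht_j,L_j$, differentiation in $s$, integration in $\z$ with $s$ as parameter, multiplication, and division by the nowhere-vanishing $\tht_1$) preserves $C^\infty(\p\Om)$-regularity.
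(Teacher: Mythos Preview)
Your proposal is correct and follows precisely the induction that the paper only sketches (the paper's entire argument is the sentence ``Proceeding in the same way with other problems, by induction we prove easily the following lemma''). Your degree bookkeeping is right: once you observe that each $\cL_i$ applied to $\Phi_{j-i-1}e^{\tht_1\z}$ produces a polynomial of degree at most $2j-i-1\le 2j-1$ times $e^{\tht_1\z}$ (the $s$-derivatives can pull down at most one factor of $\z$ each via $\nabla_s\tht_1$), the reduced equation $-R_j''-\tht_1R_j'=P_j$ with $\deg P_j\le 2j-1$ and $\tht_1\ne0$ has a polynomial solution of degree $\le 2j$, and the single boundary condition fixes the remaining constant. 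The presentation would be cleaner if you went straight to the sharp bound $\deg G_j\le 2j-1$ rather than first stating $\le 2j$ and then backtracking, but the substance is exactly what the paper intends.
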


We choose an integer $N$ large enough and in view of the above construction, the proposed approximation for the eigenfunction reads as follows:
\begin{equation}
\Psi_{\e,N}(x):=1-Q^\e_N(x),\qquad Q^\e_N(x):=      \chi(x)\sum\limits_{j=0}^{N}\e^{2j} Q_j(\e^{-2}\tau,s). \label{4.28}
\end{equation}

Denote
$\mu(\e):=\int\limits_{\p\Om}e^{-\frac{\tht(s)}{\e^2}}\di s$.
The function $\mu(\e)$ is obviously positive. By the Laplace method, this function behaves as
\begin{equation}\label{5.0}
\mu(\e)=c_8\e^{\frac{2(n-1)}{p}} e^{-\frac{\tht_{min}}{\e^2}}\Big(1+O\big(\e^{\frac{2}{p}}\big)\Big),
\end{equation}
where $c_8$ is a positive constant independent of $\e$, $p\in\mathds{N}$ is some fixed number and it is the same as in (\ref{2.20}).

The next lemma states that $\psi_{\e,N}$ is a formal asymptotic solution of  problem (\ref{3.15}) and this completes the formal construction.

\begin{lemma}\label{lm6.1}
The function  $\Psi_{\e,N}$ is infinitely differentiable in $\overline{\Om}$ and solves the equation
\begin{equation*}
 \Op_\e \Psi_{\e,N}=h_{\e,N},
\end{equation*}
where $h_{\e,N}\in C^\infty(\overline{\Om})$  is a function supported in $\{x\in\Om:\, 0\leqslant \tau\leqslant \tau_0\}$ and obeying the estimates
\begin{equation*}
\|e^{-\frac{V}{2\e^2}}h_{\e,N}\|_{L_2(\Om)}=O\big(\e^{2N+1}
\mu^\frac{1}{2}(\e)\big),\qquad \|e^{-\frac{V}{2\e^2}}h_{\e,N}\|_{C(\overline{\Om})}= O\big( \e^{2N}e^{-\frac{\tht_{min}}{2\e^2}}\big),
 \qquad
  \big\|h_{\e,N}\big\|_{C(\overline{\Om})} =O(\e^{2N}).
\end{equation*}
\end{lemma}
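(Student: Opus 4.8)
The plan is to verify directly that $\Psi_{\e,N}$ is a genuine smooth function and that when we apply $\Op_\e$ to it, the result is supported near $\p\Om$ and small in the claimed norms. First I would observe that $\Psi_{\e,N}=1-\chi(x)\sum_{j=0}^N\e^{2j}Q_j(\e^{-2}\tau,s)$ is infinitely differentiable in $\overline\Om$: by Lemma~\ref{lm4.2} each $Q_j(\z,s)=\Phi_j(\z,s)e^{\tht_1(s)\z}$ with $\Phi_j$ polynomial in $\z$ and smooth in $s$, the map $x\mapsto(\tau,s)$ is smooth in $\Pi_{\tau_0}$ where $\chi$ is supported, and by (\ref{2.1a}) everything is smooth there; outside $\supp\chi$ the function is the constant $1$. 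Then I would compute $\Op_\e\Psi_{\e,N}$. Since $\Op_\e 1 = 0$, we have $\Op_\e\Psi_{\e,N}=-\Op_\e Q^\e_N$. The cut-off $\chi$ equals one for $\tau<\d/3$, so on that region $Q^\e_N$ coincides with the boundary-layer series and, by the construction in Section~4 (equations (\ref{4.16})--(\ref{4.18b}) with the $Q_j$ chosen to kill all coefficients of powers of $\e$ up to order $\e^{2N}$), the only surviving contribution is the tail: $\Op_\e$ applied to $\sum_{j=0}^N\e^{2j}Q_j$ produces a remainder which, after passing back to the variable $\z=\tau\e^{-2}$, is $O(\e^{2N+2})\cdot\sum_{i}(\text{polynomials in }\z)e^{\tht_1\z}$ times the appropriate negative power of $\e$ from $\cL^\e$, i.e. $O(\e^{2N})$ times $(\text{polynomial in }\e^{-2}\tau)e^{\tht_1(s)\e^{-2}\tau}$.

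Next, where $\d/3<\tau<2\d/3$ the derivatives of $\chi$ are nonzero; there one picks up commutator terms $\e^2\D\chi$, $\nabla V\cdot\nabla\chi$ multiplied by the boundary-layer series evaluated at $\tau\geqslant\d/3$. Because $Q_j(\e^{-2}\tau,s)=\Phi_j(\e^{-2}\tau,s)e^{\tht_1(s)\e^{-2}\tau}$ with $\tht_1\leqslant -c_2<0$, on $\tau\geqslant\d/3$ these terms are $O(\e^{-M}e^{-c_2\d/(3\e^2)})$ for some fixed $M$, hence exponentially small and in particular $O(\e^{2N})$ — indeed they are negligible compared with the tail term. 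Collecting everything, $h_{\e,N}:=\Op_\e\Psi_{\e,N}$ is supported in $\{0\leqslant\tau\leqslant\tau_0\}$ (really in $\supp\chi\subset\Pi_{2\d/3}$) and smooth. The $C(\overline\Om)$ bound $\|h_{\e,N}\|_{C(\overline\Om)}=O(\e^{2N})$ follows since $\sup_{\z\geqslant0}|\z^m e^{\tht_1\z}|$ is bounded uniformly in $s$, so the leading tail term is $O(\e^{2N})$ pointwise.

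For the weighted estimates I would use that on $\supp h_{\e,N}$ we have $V(x)\leqslant\tht(s)-c_2\tau$ by (\ref{3.14}), equivalently $e^{-V/(2\e^2)}\geqslant e^{-(\tht(s)-c_2\tau)/(2\e^2)}$; more usefully, $e^{-V/(2\e^2)}|h_{\e,N}|$ carries an extra exponential decay. For the $C$-norm: $|e^{-V/(2\e^2)}h_{\e,N}|\leqslant C\e^{2N}\sup_{s,\tau\geqslant0}e^{-(\tht(s)-c_2\tau)/(2\e^2)}(\e^{-2}\tau)^m e^{\tht_1(s)\e^{-2}\tau}$; bounding $\tht(s)\geqslant\tht_{min}$ and noting $e^{c_2\tau/(2\e^2)}e^{\tht_1(s)\tau\e^{-2}}=e^{(c_2/2+\tht_1(s))\tau\e^{-2}}\leqslant e^{-c_2\tau/(2\e^2)}$ since $\tht_1\leqslant-c_2$, the exponential is bounded and we get $O(\e^{2N}e^{-\tht_{min}/(2\e^2)})$. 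For the $L_2$-norm: $\|e^{-V/(2\e^2)}h_{\e,N}\|_{L_2(\Om)}^2=\int_{\Pi_{2\d/3}}e^{-V/\e^2}|h_{\e,N}|^2\,dx$; writing $dx=J(\tau,s)\,d\tau\,ds$, using $V\leqslant\tht(s)-c_2\tau$ and the bound on $|h_{\e,N}|$, the $\tau$-integral of $(\e^{-2}\tau)^{2m}e^{2\tht_1\e^{-2}\tau}\,d\tau$ contributes a factor $O(\e^2)$ after substituting $\z=\tau\e^{-2}$, leaving $\int_{\p\Om}e^{-\tht(s)/\e^2}\,ds=\mu(\e)$, so the square is $O(\e^{4N+2}\mu(\e))$, i.e. the norm is $O(\e^{2N+1}\mu^{1/2}(\e))$.

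The main obstacle — though more bookkeeping than genuine difficulty — is tracking the powers of $\e$ through $\cL^\e$ and the change of variables $\z=\e^{-2}\tau$ to confirm that the boundary-layer recursion (\ref{4.18b}) indeed annihilates every term up to and including order $\e^{2N}$, so that the first genuine error is $O(\e^{2N})\cdot(\text{bounded boundary-layer profile})$ rather than something larger; and checking that the derivatives-of-$\chi$ terms are exponentially (hence a fortiori power-$\e^{2N}$) small, which hinges precisely on the sign condition $\tht_1\leqslant -c_2<0$ from (\ref{2.6a}) and the choice $\d<\tht_{min}/(4c_2)$ in (\ref{3.13a}). Once these two points are in place, the three displayed estimates follow from the elementary bounds above on $\sup_\z|\z^m e^{\tht_1\z}|$ and $\int_0^\infty|\z^m e^{\tht_1\z}|\,d\z$ together with the Laplace-type definition of $\mu(\e)$.
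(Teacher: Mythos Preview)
Your approach is the same as the paper's: decompose $h_{\e,N}$ into the truncation tail (where $\chi\equiv1$ and only the unmatched terms of order $>\e^{2N-2}$ in the recursion~(\ref{4.18b}) survive) plus the commutator terms carrying derivatives of $\chi$, and estimate each piece. Smoothness, support, and the unweighted $C$-bound are all handled correctly.

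There is one slip in the weighted estimates. Inequality~(\ref{3.14}), $V\leqslant\tht(s)-c_2\tau$, yields only a \emph{lower} bound $e^{-V/(2\e^2)}\geqslant e^{-(\tht(s)-c_2\tau)/(2\e^2)}$, so it cannot justify the upper bound you write for $|e^{-V/(2\e^2)}h_{\e,N}|$; the same issue recurs in your $L_2$ computation. What is needed (and what the paper uses) is the Taylor expansion $V=\tht_0+\tht_1\tau+O(\tau^2)$ on $\Pi_\d$, giving
\[
e^{-V/(2\e^2)}\leqslant C\,e^{-\tht_0(s)/(2\e^2)}\,e^{-\tht_1(s)\z/2}\,e^{C\e^2\z^2},\qquad \z=\tau\e^{-2}.
\]
Multiplying by the profile factor $e^{\tht_1(s)\z}$ in $h_{\e,N}^{(1)}$ leaves a net exponent $\tht_1(s)\z/2+C\e^2\z^2\leqslant(\tht_1(s)/2+C\d)\z$ on $\supp\chi$, which is strictly negative for $\d$ small enough (this is where the smallness of $\d$ in~(\ref{3.13a}) enters, not the bound $\d<\tht_{min}/(4c_2)$). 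With this correction your sup-bound gives the $C$-estimate $O(\e^{2N}e^{-\tht_{min}/(2\e^2)})$, and after the change $d\tau=\e^2\,d\z$ the $\tau$-integral is $O(\e^2)$ uniformly in $s$, producing $\int_{\p\Om}e^{-\tht_0/\e^2}\,ds=\mu(\e)$ and hence the $L_2$-bound $O(\e^{2N+1}\mu^{1/2}(\e))$, exactly as you claim.
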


\begin{proof}
The stated smoothness of the function   $\psi_{\e,N}$ is obvious. It vanishes on $\p\Om$ thanks to boundary conditions (\ref{4.22}). Hence, $\psi_{\e,N}\in\Dom(\Op^\e)$.
By straightforward calculations we obtain:
\begin{align*}
\big(\Op_\e\psi_{\e,N}\big)(x)=&   \big(\e^2\D-\nabla V\cdot \nabla\big) \chi(x)\sum\limits_{j=0}^{N}\e^{2j} Q_j(\e^{-2}\tau,s)
\\
=&  \chi(x) \cL^\e\sum\limits_{j=0}^{N}\e^{2j} Q_j(\z,s)
+    \sum\limits_{j=0}^{N}\e^{2j}
\Big(2\e^2\nabla\chi \cdot \nabla  Q_j(\e^{-2}\tau,s)
\\
&
+\e^2 Q_j(\e^{-2}\tau,s)\D\chi-Q_j(\e^{-2}\tau,s)\nabla V\cdot\nabla \chi(x)\Big)
=:h_{\e,N}^{(1)}+h_{\e,N}^{(2)}=:h_{\e,N}.
\end{align*}
The functions $h_{\e,N}^{(1)}$, $h_{\e,N}^{(2)}$, $h_{\e,N}$ are obviously infinitely differentiable in $\overline{\Om}$. Employing equations  (\ref{4.18b}) and Lemma~\ref{lm4.2}, we get:
\begin{equation*}
\big\|h_{\e,N}^{(1)} \big\|_{C(\overline{\Om})}\leqslant C\e^{2N},\qquad \big\| h_{\e,N}^{(2)}\big\|_{C(\overline{\Om})}\leqslant
 C\e^{-2N}e^{-\frac{\g c_2}{3\e^2}}.
\end{equation*}
Since $V=\tht_0+\tau\tht_1+O(\tau^2)$ for small $\tau$, by Lemma~\ref{lm4.2} we also obtain:
\begin{align*}
&\big\|e^{-\frac{V}{2\e^2}}h_{\e,N}^{(1)} \big\|_{L_2(\Om)} \leqslant C
\e^{2N+1}\mu^\frac{1}{2}(\e), && \big\|e^{-\frac{V}{2\e^2}}h_{\e,N}^{(2)}\big\|_{L_2(\Om)}\leqslant
 C\e^{1-2N}e^{-\frac{c_2\d}{6\e^2}}\mu^\frac{1}{2}(\e),
 \\
& \big\|e^{-\frac{V}{2\e^2}}h_{\e,N}^{(1)} \big\|_{C(\overline{\Om})} \leqslant C
\e^{2N}e^{-\frac{\tht_{min}}{2\e^2}}, && \big\|e^{-\frac{V}{2\e^2}}h_{\e,N}^{(2)}\big\|_{C(\overline{\Om})}\leqslant
 C\e^{-2N}e^{-\frac{c_2\d}{6\e^2}}e^{-\frac{\tht_{min}}{2\e^2}}.
\end{align*}
The proof is complete.
\end{proof}

\section{Asymptotics for the lowest eigenvalue and associated eigenfunction}

In this section we complete the proofs of  Theorems~\ref{thEV},~\ref{thEF}.
Throughout this section, the symbol $C$ stands for various inessential constants independent of $\e$, $x$, $s$, $\tau$ but possibly depending on $N$. We recall that the operator $\Op^\e$  is self-adjoint.

Let $\psi^\e$ be the eigenfunction of the operator $\Op^\e$ associated with the lowest eigenvalue $\l_\e$ and normalized in $L_2(\Om)$. By the standard smoothness improving theorems and (\ref{2.1a}), we infer that $\psi^\e\in C^2(\overline{\Om})\cap C^\infty(\overline{B_{\rho_1}})$. Then we apply Lemma~\ref{lm:Max} with $\l=\l_\e$, $u=\psi^\e$ and thanks to the normalization of $\psi^\e$, we get the estimates
\begin{equation}\label{6.22}
\begin{aligned}
&\|\psi^\e\|_{C(\overline{\Om})}\leqslant C\e^{-n-2}, &&
\|\nabla\psi^\e\|_{L_2(\Om)}\leqslant C\e^{-2}, && \|\p^2_{xx}\psi^\e\|_{L_2(\Om)}\leqslant C\e^{-4},
\\
&\|e^{\frac{V}{2\e^2}}\psi^\e\|_{C(\overline{\Om})}\leqslant C\e^{-n-5}, && \|\nabla e^{\frac{V}{2\e^2}}\psi^\e\|_{C(\overline{\Om})}\leqslant C\e^{-n-6},
\\
&\|\p^2_{xx} e^{\frac{V}{2\e^2}}\psi^\e\|_{C(\overline{\Om})}\leqslant C\e^{-n-8}, \qquad  && \|\psi^\e\|_{\H^{m+1}(B_{\rho_1})}\leqslant C\e^{-2m-2}, \qquad && m\in\mathds{N}.
\end{aligned}
\end{equation}
The latter estimates first are obtained for the norms on $B_{\frac{\rho_1}{2}}$. But then we can just lessen $\rho_1$ twice to get the stated inequalities.

By $L_2^\bot(\Om)$ we denote the orthogonal complement to $\psi^\e$ in $L_2(\Om)$. It clear that the spectrum of the restriction of the operator $\Op^\e$ on $L_2^\bot(\Om)\cap\Dom(\Op^\e)$ starts at $\l_2^\e$ and hence, by Lemma~\ref{lm3.1},
\begin{equation}\label{3.0}
\|(\Op^\e-\l)^{-1}\|\leqslant C\e^{\frac{4}{k}-2}\qquad \text{for all}\quad \l\in[0,\e^2].
\end{equation}

Denote
\begin{equation*}
\Psi_\e^N:=\cE_\e^{-1}\psi_{\e,N}, \qquad \Psi_N^\e(x)=e^{-\frac{V(x)}{2\e^2}}-\chi(x)e^{-\frac{V(x)}{2\e^2}} \sum\limits_{j=0}^{N}\e^{2j} Q_j(\e^{-2}\tau,s).
\end{equation*}
It follows from (\ref{3.4}) and Lemma~\ref{lm6.1} that
\begin{equation}\label{6.0}
\begin{aligned}
&
\Op^\e\Psi_N^\e=h_N^\e,\qquad && h_N^\e:=\cE_\e^{-1}h_{\e,N}=e^{-\frac{V}{2\e^2}}h_{\e,N}, \qquad &&\Psi_N^\e, h_N^\e\in C^\infty(\overline{\Om}),
\\
& \|h_N^\e\|_{L_2(\Om)}=O(\e^{2N+1}\mu^\frac{1}{2}(\e)), &&
 \|h_N^\e\|_{C(\overline{\Om})}=O\big(\e^{2N}e^{-\frac{\tht_{min}}{2\e^2}}\big),
 && \|e^{\frac{V}{2\e^2}}h_N^\e\|_{C(\overline{\Om})}=O(\e^{2N}),
\end{aligned}
\end{equation}
and $\supp h_N^\e\subset\big\{x:\, 0\leqslant \tau\leqslant \tau_0\big\}$.
The function $\Psi_N^{\e,\bot}:=\Psi_N^\e-(\Psi_N^\e,\psi^\e)_{L_2(\Om)}\psi^\e$ belongs to $L_2^\bot(\Om)\cap\Dom(\Op^\e)$ and  solves the equation
\begin{equation*}
\Op^\e\Psi_N^{\e,\bot}=h_N^{\e,\bot},\qquad h_N^{\e,\bot}:=h_N^\e-(h_N^\e,\psi^\e)_{L_2(\Om)}\psi^\e,
\end{equation*}
where the right hand side satisfies the estimates
\begin{equation}\label{6.24}
\|h_N^{\e,\bot}\|_{L_2(\Om)}\leqslant \|h_N^\e\|_{L_2(\Om)}\leqslant C\e^{2N+1}\mu^\frac{1}{2}(\e),\qquad \big|(h_N^\e,\psi^\e)_{L_2(\Om)}\big|=O\big(\e^{2N+1}\mu^\frac{1}{2}(\e)\big).
\end{equation}
It follows from  (\ref{3.0}) that
\begin{equation*}
\|\Psi_N^{\e,\bot}\|_{L_2(\Om)}\leqslant C\e^{2N+\frac{4}{k}-1}\mu^{\frac{1}{2}}(\e)\leqslant C\e^{2N-1}\mu^{\frac{1}{2}}(\e).
\end{equation*}

By  the normalization of $\psi^\e$ and Lemma~\ref{lm4.2}, the identity
\begin{equation*}
(\Psi_{N+1}^\e,\psi^\e)_{L_2(\Om)}-(\Psi_N^\e,\psi^\e)_{L_2(\Om)}=-\e^{2N+2} \int\limits_{\Om} e^{-\frac{V(x)}{2\e^2}} \chi(x) Q_{N+1}(\tau\e^{-2},s)\psi^\e(x)\di x=O\big(\e^{2N+3}\mu^\frac{1}{2}(\e)\big)
\end{equation*}
holds. Hence, there exists a function $A(\e)$ such that
\begin{equation}\label{5.11}
A(\e)=(\Psi_N^\e,\psi^\e)_{L_2(\Om)} + O\big(\e^{2N+1}\mu^\frac{1}{2}(\e)\big),\qquad |A(\e)|\leqslant C \|\psi_\infty^\e\|_{L_2(\Om)}\leqslant C.
\end{equation}
We let
 $\Psi^\e:=A(\e)\psi^\e$, $\Xi_N^\e:=\Psi_N^\e-\Psi^\e$,  and we see that
\begin{equation}\label{6.26}
\Xi_N^\e=\Psi_N^{\e,\bot}-\big(A(\e)-(\Psi_N^\e,\psi^\e)_{L_2(\Om)}\big)\psi^\e.
\end{equation}
Applying  Lemma~\ref{lm:Max} with $u=\Psi_N^{\e,\bot}$, $\l=0$, and taking into consideration estimates (\ref{6.22}), (\ref{6.0}), (\ref{6.24}),  we obtain a series of estimates for $\Psi_N^{\e,\bot}$. Using then (\ref{5.11}) and (\ref{6.22}), we can estimate the second term in the right hand in (\ref{6.26}). As a result, we obtain the following inequalities for $\Xi_N^\e$:
\begin{align}\label{6.31}
&
\begin{aligned}
&\|\Xi_N^\e\|_{L_2(\Om)}=O\big(\e^{2N-1}\mu^\frac{1}{2}(\e)\big), &&
\|\nabla \Xi_N^\e\|_{L_2(\Om)}=O\big(\e^{2N-3}\mu^\frac{1}{2}(\e)\big),
\\
&
\|\p_{xx}^2\Xi_N^\e\|_{L_2(\Om)}= O\big(\e^{2N-5}\mu^\frac{1}{2}(\e)\big),\hphantom{\|\Xi_N^\e\|_{C\overline{\Om}}} &&
\|\Xi_N^\e\|_{C(\overline{\Om})}= O\big(\e^{2N-2n}
e^{-\frac{\tht_{min}}{2\e^2}}
\big),
\end{aligned}
\\
&\label{6.32}
\begin{aligned}
&\|e^{\frac{V}{2\e^2}}\Xi_N^\e\|_{L_2(\Om)}= O\big(\e^{2N-3n-2} \big), &&
\|\nabla e^{\frac{V}{2\e^2}} \Xi_N^\e\|_{L_2(\Om)}=O\big(\e^{2N-3n-3} \big),
\\
&
\|\p_{xx}^2 e^{\frac{V}{2\e^2}}\Xi_N^\e\|_{L_2(\Om)}= O\big(\e^{2N-3n-5}\big),\qquad \hphantom{...}  &&
\|e^{\frac{V}{2\e^2}}\Xi_N^\e\|_{C(\overline{\Om})}= O\big(\e^{2N-3n-2} \mu^\frac{1}{2}(\e)\big),
\end{aligned}
\\
&
\begin{aligned}
\|\Xi_N^\e\|_{\H^{m+1}(B_{\frac{\rho_1}{2}})}= O\big(\e^{2N-4m+1}\mu^\frac{1}{2}(\e)\big),\qquad m\in\mathds{N}.
\end{aligned}\label{6.33}
\end{align}
The first estimate in (\ref{6.31}) yields that
\begin{equation*}
A(\e)=\|\Psi^\e\|_{L_2(\Om)}=\|\Psi_N^\e\|_{L_2(\Om)}+ O\big(\e^{2N+\frac{2n}{k}-1}\mu^{\frac{1}{2}}(\e)\big) =\|\psi_\infty^\e\|_{L_2(\Om)}(1+o(1))\ne0
\end{equation*}
and hence, $\Psi^\e$ is an eigenfunction of $\Op^\e$ associated with $\l_\e$.

It is easy to confirm that
\begin{align*}
& \big\|\e^{2j} \chi Q_j\big\|_{L_2(\Om)}=O\big(\e^{2j+1} \big), &&
  \big\|\e^{2j}\nabla  \chi Q_j\big\|_{L_2(\Om)}=O\big(\e^{2j-1} \big), \\
& \big\|\e^{2j}\p^2_{xx} \chi Q_j\big\|_{L_2(\Om)}=O\big(\e^{2j-3} \big),
&& \big\|\e^{2j} \chi Q_j\big\|_{C(\overline{\Om})} =O\big(\e^{2j} \big),
\\
& \big\|\e^{2j}e^{-\frac{V}{2\e^2}}\chi Q_j\big\|_{L_2(\Om)}=O\big(\e^{2j+1}\mu^{\frac{1}{2}}(\e)\big), &&
  \big\|\e^{2j}\nabla e^{-\frac{V}{2\e^2}}\chi Q_j\big\|_{L_2(\Om)}=O\big(\e^{2j-1}\mu^{\frac{1}{2}}(\e)\big), \\
& \big\|\e^{2j}\p^2_{xx}e^{-\frac{V)}{2\e^2}}\chi Q_j\big\|_{L_2(\Om)}=O\big(\e^{2j-3}\mu^{\frac{1}{2}}(\e)\big),
&& \big\|\e^{2j}e^{-\frac{V}{2\e^2}}\chi Q_j\big\|_{C(\overline{\Om})} =O\big(\e^{2j}e^{-\frac{\tht_{min}}{2\e^2}}\big).
\end{align*}
By the first of the above estimates and the first estimate in (\ref{6.31}) we infer that
\begin{equation}\label{6.12a}
\|  \Xi_{N-2}^\e\|_{L_2(\Om)}\leqslant \| \Xi_N^\e\|_{L_2(\Om)} + \|\e^{2(N-1)}\chi(Q_{N-1}+\e^2 Q_{N-2}\|_{L_2(\Om)}= O \big(\e^{2N-1}\mu^\frac{1}{2}(\e)\big),
\end{equation}
and since $N$ is arbitrary, $\|\Xi_N^\e\|_{L_2(\Om)}=\big(\e^{2N+3}\mu^\frac{1}{2}(\e)\big)$. In the same way we can improve other estimates in (\ref{6.31}), (\ref{6.32}):
\begin{align}\label{6.27}
&
\begin{aligned}
&\|\Xi_N^\e\|_{L_2(\Om)}=O\big(\e^{2N+3}\mu^\frac{1}{2}(\e)\big), &&
\|\nabla \Xi_N^\e\|_{L_2(\Om)}=O\big(\e^{2N+1}\mu^\frac{1}{2}(\e)\big),
\\
&
\|\p_{xx}^2\Xi_N^\e\|_{L_2(\Om)}= O\big(\e^{2N-1}\mu^\frac{1}{2}(\e)\big),\hphantom{\|\Xi_N^\e\|_{C\overline{\Om}}} &&
\|\Xi_N^\e\|_{C(\overline{\Om})}= O\big(\e^{2N+2}e^{-\frac{\tht_{min}}{2\e^2}}\big),
\end{aligned}
\\
&\label{6.29}
\begin{aligned}
&\|e^{\frac{V}{2\e^2}}\Xi_N^\e\|_{L_2(\Om)}= O\big(\e^{2N+3}\big), &&
\|\nabla e^{\frac{V}{2\e^2}} \Xi_N^\e\|_{L_2(\Om)}=O\big(\e^{2N+1}\big),
\\
&
\|\p_{xx}^2 e^{\frac{V}{2\e^2}}\Xi_N^\e\|_{L_2(\Om)}= O\big(\e^{2N-1}\big),\hphantom{\|\nabla e^{\frac{V}{2\e}} \Xi^\e\|}  &&
\|e^{\frac{V}{2\e^2}}\Xi_N^\e\|_{C(\overline{\Om})}= O\big(\e^{2N+2}\big).
\end{aligned}
\end{align}
It follows from the definition of $\Psi_\e^N$, $\Psi_{\e,N}$, $\Xi_N^\e$, $\Psi^\e$ and (\ref{5.11}) that the eigenfunction $\Psi_\e:=A(\e)\psi^\e$  of the operator $\Op_\e$ satisfies (\ref{2.10}) with $\Xi_{\e,N}:=e^{\frac{V}{2\e^2}}\Xi_N^\e$ and by (\ref{6.29}), the error term satisfies (\ref{2.15b}), (\ref{2.15c}).

Let us prove estimates (\ref{2.14a}). For each $\om\subset\Om$, by first two estimates in (\ref{6.31}) and (\ref{5.0}), we have  the relations
\begin{equation}\label{6.21}
\begin{aligned}
&\|\Xi_N^\e\|_{L_2(\om)}=\|e^{\frac{V}{2\e^2}} e^{-\frac{V}{2\e^2}}\Xi_N^\e\|_{L_2(\om)}\leqslant
e^{\frac{\V_\om}{2\e^2}} \|e^{-\frac{V}{2\e^2}}\Xi_N^\e\|_{L_2(\Om)} =O\Big(\e^{2N+3} e^{-\frac{\tht_{min}-\V_\om}{2\e^2}}\Big),
\\
&\|\nabla \Xi_N^\e\|_{L_2(\om)}=\|\nabla e^{\frac{V}{2\e^2}} e^{-\frac{V}{2\e^2}}\Xi_N^\e\|_{L_2(\om)}\leqslant
\|e^{\frac{V}{2\e^2}} \nabla e^{-\frac{V}{2\e^2}}\Xi_N^\e\|_{L_2(\om)} + C\e^{-2}\|\Xi_N^\e\|_{L_2(\om)}
\\
&\hphantom{\|\nabla \Xi_N^\e\|_{L_2(\om)}} =O\Big(\e^{2N+1} e^{-\frac{\tht_{min}-\V_\om}{2\e^2}}\Big).
\end{aligned}
\end{equation}
Other two estimates in (\ref{2.14a}) can be proved in the same way.

Let us find the asymptotics for $\l_\e$.
The function $\Psi^\e$ solves the boundary value problem
\begin{equation*}
(-\e^2\D+W_\e)\Psi^\e=\l_\e \Psi^\e\quad\text{in}\quad\Om, \qquad \Psi^\e=0\quad\text{on}\quad\p\Om.
\end{equation*}
We multiply the  equation by $\psi_\infty^\e$ and integrate twice by parts over $\Om$. This gives  identity:
\begin{equation}\label{5.14}
\l_\e=\frac{\e^2\int\limits_{\p\Om} \psi_\infty^\e \frac{\p\Psi^\e}{\p\tau}\di s} {(\Psi^\e,\psi_\infty^\e)_{L_2(\Om)}},
\end{equation}
which coincides with (\ref{5.14a}) thanks to the definition of $\Psi^\e$ and $\psi_\e^\infty$. By the Laplace method,  in view of (\ref{2.1a}), (\ref{2.6a}),  we get the asymptotic expansion
\begin{equation}\label{5.17}
\|\psi_\infty^\e\|_{L_2(\Om)}^2= \sum\limits_{j=n}^{\infty}\a_j\e^{\frac{2j}{k}},
\end{equation}
where $\a_j$ are some real  numbers and, in particular,
\begin{equation}\label{5.18}
\a_n=\int\limits_{\mathds{R}^n} e^{-V_0}\di x,\qquad \a_{n+1}=\int\limits_{\mathds{R}^n} V_1 e^{-V_0}\di x,
\end{equation}
where, we recall, the functions $V_0$, $V_1$ were defined in (\ref{5.28a}).
In view of definition (\ref{4.28}) of the function $\Psi^\e_N$, we also have
\begin{equation*}
\|\Psi^\e_N-\psi_\infty^\e\|_{L_2(\Om)}=O\big(\e\mu^\frac{1}{2}(\e)\big).
\end{equation*}
The above estimate, (\ref{6.27}) and the standard embedding theorems
yield
\begin{equation}\label{5.20}
(\Psi^\e,\psi_\infty^\e)_{L_2(\Om)}= \sum\limits_{j=n}^{\infty}\a_j\e^{\frac{2j}{k}},
\qquad \left\|\frac{\p\Psi^\e}{\p\tau}-\frac{\p\Psi^\e_N}{\p\tau}\right\|_{L_2(\p\Om)} =O\big(\e^{2N -1}\mu^{\frac{1}{2}}(\e)\big).
\end{equation}
We also observe that
$\|\psi_\infty^\e\|_{L_2(\p\Om)}^2=\mu(\e)$.
Hence,
\begin{equation}\label{5.22}
\int\limits_{\p\Om} \psi_\infty^\e\frac{\p\Psi^\e}{\p\tau}\di s=\int\limits_{\p\Om}\psi_\infty^\e\frac{\p\Psi^\e_N}{\p\tau}\di s+O\big(\e^{2N-1}\mu(\e)\big),
\end{equation}
Employing (\ref{4.28}),
(\ref{4.23}), we calculate:
\begin{align*}
\frac{\p\Psi^\e_N}{\p\tau}\bigg|_{\p\Om}= &   e^{-\frac{\tht_0(s)}{2\e^2}}\left(\frac{\tht_1(s)}{2\e^2} - \sum\limits_{j=1}^{N}\e^{2j-2} \frac{\p Q_j}{\p\z}(0,s)\right)
= e^{-\frac{\tht_0(s)}{2\e^2}}\left(\frac{\tht_1(s)}{2\e^2} - \sum\limits_{j=1}^{N}\e^{2j-2} \frac{\p \Phi_j}{\p\z}(0,s)\right).
\end{align*}
We substitute this identity into (\ref{5.22}) and for sufficiently large $N$ we arrive at the asymptotic formula
\begin{equation}\label{5.25}
\int\limits_{\p\Om} \psi_\infty^\e\frac{\p\Psi^\e}{\p\tau}\di s=\sum\limits_{j=0}^{N}\e^{2j-2}\mu_j(\e) +O\big(\e^{2N-1}\mu(\e)\big),
\end{equation}
where the functions $\mu_j$ are defined by (\ref{5.23}) and satisfy (\ref{5.24}).

Formulae (\ref{5.14}), (\ref{5.20}), (\ref{5.25}), (\ref{5.23}), (\ref{5.24}) gives the final asymptotics for $\l_\e$:
\begin{equation*}
\l_\e=\frac{\sum\limits_{j=0}^{N}\e^{2j-2}\mu_j(\e) +O\big(\e^{2N-1}\mu(\e)\big)}{\sum\limits_{j=n}^{\infty}\a_j\e^{\frac{2j}{k}}}
=\e^{-\frac{2n}{k}-2}\sum\limits_{j=0}^{ Nk}\e^{\frac{2j}{k}}M_j\left(\mu_0(\e),\ldots, \mu_{\lf\frac{j}{k}\rf}(\e)
\right) +O\big(\e^{\frac{2(N-n+1)}{k}-2}\mu(\e)\big),
\end{equation*}
where $M_j$ are some linear combinations with fixed coefficients and this proves (\ref{2.9}).
Formula (\ref{5.28}) is implied by
 (\ref{5.18}), (\ref{5.23}).
Due to the positivity of the functions $\tht_0$ and $-\tht_1$ we have
\begin{equation*}
C\mu_0 \leqslant M_0(\mu_0) \leqslant C^{-1}\mu_0,\qquad C>0.
\end{equation*}

\section{Asymptotics for solution}

In this section we prove Theorem~\ref{thEq}. Our strategy is as follows. First we construct the asymptotics for the solution of  equation (\ref{3.6}) recovering then the solution of (\ref{3.2}) by formula (\ref{3.5}). The solution of equation (\ref{3.6}) is given by $u^\e=(\Op^\e)^{-1}\psi_\infty^\e$ and we  represent it as
\begin{equation}\label{7.1}
u^\e=K_\e\Psi^\e+u^{\e,\bot},
\end{equation}
where $K_\e$ is some constant and $u^{\e,\bot}$ is orthogonal to $\Psi_\e$ in $L_2(\Om)$. The function $\psi_\infty^\e$ is explicit, while the asymptotics for $\Psi_\e$ was studied in details in the above given proof of Theorem~\ref{thEF}. And thanks to this asymptotics, it turns out that the right hand side in an equation for $u^{\e,\bot}$ has a structure of boundary layer like $Q^\e$ in (\ref{3.16}). This implies the same structure for $u^{\e,\bot}$. By a simple trick we also succeed to describe the asymptotics for $K_\e$ and this gives a complete asymptotic expansion for $u^\e$.

We proceed to the detailed proof.   We represent the solution of equation (\ref{3.6}) by formula (\ref{7.1}), where $u^{\e,\bot}\in \Dom(\Op^\e)\cap L_2^\bot(\Om)$ and $K_\e$ is some constant. Substituting this representation into (\ref{3.6}), we see immediately that $K_\e$ is given by
\begin{equation*}
  K_\e=\frac{(\psi_\infty^\e,\Psi^\e)_{L_2(\Om)}}{\l_\e \|\Psi^\e\|_{L_2(\Om)}^2},
\end{equation*}
which coincides with (\ref{7.2}) thanks to the identities
$\psi_\infty^\e=e^{-\frac{V}{2\e^2}}$, $\Psi^\e=e^{-\frac{V}{2\e^2}}\psi_\e$. The function $u^{\e,\bot}$ solves the equation
\begin{equation*}
\Op^\e u^{\e,\bot}=f^\e,\qquad f^\e:=\psi_\infty^\e- \frac{(\psi_\infty^\e,\Psi^\e)_{L_2(\Om)}}{\|\Psi^\e\|_{L_2(\Om)}^2} \Psi^\e=\psi_\infty^\e-\Psi^\e- \frac{(\psi_\infty^\e-\Psi^\e,\Psi^\e)_{L_2(\Om)}}{\|\Psi^\e\|_{L_2(\Om)}^2} \Psi^\e.
\end{equation*}
According (\ref{2.10}), for arbitrary $N\in\mathds{N}$, the function $\psi_\infty^\e -\Psi^\e$ can be represented as
$\psi_\infty^\e -\Psi^\e= e^{-\frac{V}{2\e^2}}Q_N^\e - e^{-\frac{V}{2\e^2}}\Xi_N^\e$.
Then we can rewrite $f^\e$ as
\begin{gather}
f^\e=Q_N^{\e,\bot}- \Xi_N^{\e,\bot},\nonumber
\\
\label{7.5b}
 Q_N^{\e,\bot}:=e^{-\frac{V}{2\e^2}}Q_N^\e - \frac{(e^{-\frac{V}{2\e^2}}Q_N^\e,\Psi^\e)_{L_2(\Om)}} {\|\Psi^\e\|_{L_2(\Om)}^2}\Psi^\e,\qquad
\Xi_N^{\e,\bot}:=e^{-\frac{V}{2\e^2}}\Xi_N^\e - \frac{(e^{-\frac{V}{2\e^2}}\Xi_N^\e,\Psi^\e)_{L_2(\Om)}} {\|\Psi^\e\|_{L_2(\Om)}^2}\Psi^\e.
\end{gather}
We  stress that $\Xi_N^{\e,\bot},\,\Xi_N^{\e,\bot}\in L_2^\bot(\Om)$
and by the first estimate in (\ref{6.27})
\begin{equation}\label{6.18}
\|\Xi_N^{\e,\bot}\|_{L_2(\Om)}=O\big(\e^{2N+3}\mu^\frac{1}{2}(\e)\big).
\end{equation}
The equations
\begin{equation}\label{7.7}
\Op^\e u_N^{\e,\bot}=Q_N^{\e,\bot},\qquad \Op^\e w_N^{\e,\bot}=\Xi_N^{\e,\bot}
\end{equation}
are uniquely solvable in $\Dom(\Op^\e)\cap L_2^\bot(\Om)$ and the solutions satisfy the identity
\begin{align}\label{7.7a}
u^{\e,\bot}=u_N^{\e,\bot} - w_N^{\e,\bot}.
\end{align}

Our next step is to construct an asymptotic expansion for $u_N^{\e,\bot}$. The main idea of the construction is as follows. First we construct an asymptotics of the solution to the equation
\begin{equation}\label{7.10}
\Op^\e u_N^\e=e^{-\frac{V}{2\e^2}} Q_N^\e.
\end{equation}
Once we do this, we see immediately that  the projection of $u_N^\e$ on $L_2^\bot(\Om)$ is exactly $u_N^{\e,\bot}$:
\begin{equation}\label{7.11}
u_N^{\e,\bot}=u_N^\e-\frac{(u_N^\e,\Psi^\e)_{L_2(\Om)}} {\|\Psi^\e\|_{L_2(\Om)}^2}\Psi^\e.
\end{equation}

Since the function $Q_N^\e$ is a boundary layer, we assume the same structure for $u_N^\e$:
\begin{equation}\label{7.13}
u_N^\e(x)=e^{-\frac{V}{2\e^2}} P^\e(x),\qquad P^\e(x):= \chi(x) \left(\sum\limits_{j=1}^{N+1} \e^{2j} P_j(\z,s)+O(\e^{2N+4})\right).
\end{equation}
We rewrite equation (\ref{7.10}) as the associated boundary value problem and substitute then identities (\ref{7.13}), (\ref{4.14}), (\ref{3.17}). Then we are led to the equation
$\cL^\e P^\e= Q^\e$ and  expanding it in powers of $\e$, we arrive at the boundary value problems for $P_j$ similar to (\ref{4.22}), (\ref{4.21}), (\ref{4.18b}):
\begin{equation}\label{7.14}
-\frac{\p^2 P_j}{\p\z^2} +  \tht_1\frac{\p P_j}{\p \z} =F_j,\quad \z>0,\qquad P_j(0,s)=0,\qquad
P_j(\z,s)=O(e^{-\g_j\z}),\quad \z\to+\infty,
\end{equation}
where $\g_j>0$ are some numbers and
$F_j:=Q_{j-1}+\sum_{i=0}^{j-2} \cL_i P_{j-i-1}$.

Due to (\ref{4.23}), the right hand side in the equation for $P_1$ reads as $F_1=Q_0$ and we  find $P_1(\z,s)=U_1(\z,s)e^{\tht_1(s)\z}$, where $U_1$ is given by formula (\ref{7.16}). Employing formula (\ref{2.12}) for $\Phi_2$, by straightforward calculations we also find  $P_2(\z,s)=U_2(\z,s)e^{\tht_1(s)\z}$, where $U_2$ is given by (\ref{7.16}).
Other functions $P_j$ can be found recurrently. The result is summarized in the following lemma, which can be easily proved by induction.

\begin{lemma}\label{lm7.1}
Problems   (\ref{7.14}) are uniquely solvable and their solutions are of the form $P_j(\z,s)=U_j(\z,s)e^{\tht_1(s)\z}$,
where  $U_j$ are some polynomials of degree $2j-1$ with infinitely differentiable in $s$ coefficients.
\end{lemma}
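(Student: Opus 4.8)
The plan is to prove the lemma by induction on $j$, the engine being an elementary fact about the parameter-dependent one-dimensional operator $\ell:=-\p_\z^2+\tht_1(s)\p_\z$, which I isolate first. Namely, if the right-hand side has the form $p(\z,s)e^{\tht_1(s)\z}$ with $p$ polynomial in $\z$ of degree $m$ and with coefficients in $C^\infty(\p\Om)$, then the problem $\ell P=p\,e^{\tht_1\z}$ in $\{\z>0\}$, $P(0,s)=0$, $P(\z,s)=O(e^{-\g\z})$ as $\z\to+\infty$, has a unique solution, and it is of the form $P=U(\z,s)e^{\tht_1(s)\z}$ with $U$ polynomial in $\z$ of degree at most $m+1$ and $C^\infty(\p\Om)$ coefficients. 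This is checked by substituting $P=Ue^{\tht_1\z}$: since $\ell(Ue^{\tht_1\z})=(-U_{\z\z}-\tht_1 U_\z)e^{\tht_1\z}$, the equation becomes $-U_{\z\z}-\tht_1 U_\z=p$, and because $\tht_1\leqslant-c_2<0$ never vanishes one solves for the coefficients of the polynomial $U_\z$ by descending degree (a triangular linear system whose solution uses only divisions by $\tht_1$ and the $C^\infty(\p\Om)$ data), then integrates once and fixes the constant by $U(0,s)=0$; exponential decay is automatic from $\tht_1<0$. Uniqueness holds since the homogeneous problem has general solution $c_1(s)+c_2(s)e^{\tht_1\z}$, decay forces $c_1\equiv0$, and $P(0,s)=0$ forces $c_2\equiv0$.

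For the base case $j=1$ one has $F_1=Q_0=e^{\tht_1(s)\z}$ by \eqref{4.23}, so the ODE fact gives $P_1=U_1(\z,s)e^{\tht_1\z}$ with $\deg_\z U_1\leqslant1$, and a direct computation yields the stated $U_1=-\z\tht_1^{-1}$. For the inductive step, assume $P_1,\dots,P_{j-1}$ are of the claimed form, $P_i=U_ie^{\tht_1\z}$ with $\deg_\z U_i\leqslant 2i-1$ and $C^\infty(\p\Om)$ coefficients. The key observation is that each $\cL_i$ sends a function $U(\z,s)e^{\tht_1(s)\z}$ with $\deg_\z U=d$ to a function of the same type whose $\z$-degree is raised by at most $i+1$, with $C^\infty(\p\Om)$ coefficients. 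This follows term by term from the explicit form of $\cL_i$ together with three elementary rules: $\p_\z$ preserves the $\z$-degree on such functions (as $\p_\z(Ue^{\tht_1\z})=(U_\z+\tht_1 U)e^{\tht_1\z}$), $\nabla_s$ raises it by at most one (as $\nabla_s(Ue^{\tht_1\z})=(\nabla_s U+\z U\nabla_s\tht_1)e^{\tht_1\z}$), and multiplication by $\z^k$ raises it by $k$; the worst contribution is the $\z^{i+1}\p_\z$-term, in the double-gradient term one $\nabla_s$ falls on a $\z$-independent coefficient, and the $\z^{i-1}\dvr_s L_{i-1}\nabla_s$-term is dropped for $i=0$ by the convention $L_{-1}=0$. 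Combining this with $Q_{j-1}=\Phi_{j-1}e^{\tht_1\z}$, $\deg_\z\Phi_{j-1}\leqslant2(j-1)$ (Lemma~\ref{lm4.2}), and $\deg_\z(\cL_i P_{j-i-1})\leqslant(2(j-i-1)-1)+(i+1)=2j-i-2\leqslant2j-2$, we get that $F_j=Q_{j-1}+\sum_{i=0}^{j-2}\cL_i P_{j-i-1}$ equals a polynomial in $\z$ of degree at most $2j-2$ with $C^\infty(\p\Om)$ coefficients, times $e^{\tht_1\z}$. The ODE fact then produces $P_j=U_je^{\tht_1\z}$ with $\deg_\z U_j\leqslant2j-1$, $U_j(0,s)=0$, $C^\infty(\p\Om)$ coefficients, and exponential decay, closing the induction; the explicit $U_2$ in \eqref{7.16} comes out by carrying this computation to $j=2$ using \eqref{2.12} for $\Phi_2$.

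The only point requiring genuine care is the degree bookkeeping in the inductive step: one must pass through the three groups of terms in $\cL_i$ and check that the powers of $\z$ they carry, combined with the at-most-one increase from each surface gradient and the degree preservation of $\p_\z$, never push the total $\z$-degree above $d+i+1$; everything else is routine linear algebra and a single integration in $\z$. If one wished to claim the degree of $U_j$ is exactly $2j-1$ rather than at most, one would additionally inspect the coefficient of $\z^{2j-1}$, which is generated through the resonance of the right-hand side $\Phi_{j-1}e^{\tht_1\z}$ with the homogeneous solution $e^{\tht_1\z}$; however, the stated bound is all that is used in the sequel.
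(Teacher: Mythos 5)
Your proof is correct and follows the same route the paper intends: a direct inductive argument on $j$, with each step reduced to a one-dimensional constant-coefficient (in $\z$) ODE in the variable $\z$ with $\tht_1(s)$ as a nonvanishing parameter. The paper itself computes $P_1$ and $P_2$ explicitly and then says the general case ``can be easily proved by induction''; you have simply supplied the missing details. In particular, the identity $\ell(Ue^{\tht_1\z})=(-U_{\z\z}-\tht_1 U_\z)e^{\tht_1\z}$, the triangular-in-descending-degree solution for $U_\z$ (with divisions only by $\tht_1$, which is smooth and bounded away from zero), the uniqueness argument via the homogeneous solutions $c_1(s)+c_2(s)e^{\tht_1(s)\z}$, and the degree bookkeeping $\deg_\z(\cL_iP_{j-i-1})\leqslant(2(j-i-1)-1)+(i+1)=2j-i-2\leqslant 2j-2$ are all correct. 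One minor remark: the lemma's wording ``polynomials of degree $2j-1$'' should really be ``of degree at most $2j-1$'' (as in the statement of Theorem~\ref{thEq} and in the companion Lemma~\ref{lm4.2}); your argument establishes the ``at most'' version, which, as you observe, is all that is used.
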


We denote
\begin{equation}\label{7.33}
P_N^\e(x):=\chi(x) \sum\limits_{j=1}^{N+1}\e^{2j} P_j(\tau\e^{-2},s).
\end{equation}

\begin{lemma}\label{lm7.3}
The function $P_N^\e$ is infinitely differentiable in $\overline{\Om}$, is supported in  $\{x\in\Om:\, 0\leqslant \tau\leqslant \tau_0\}$ and solves the equation
\begin{equation*}
\Op_\e  P_N^\e=  Q_N^\e+Z_N^\e.
\end{equation*}
Here $Z_N^\e\in C^\infty(\overline{\Om})$ is a function supported in $\{x\in\Om:\, 0\leqslant \tau\leqslant \tau_0\}$ and  obeying the estimate
\begin{equation*}
 \big\|Z_N^\e\big\|_{C(\overline{\Om})} =O(\e^{2N+2}), \qquad
\|e^{-\frac{V}{2\e^2}}Z_N^\e\|_{L_2(\Om)}=O\big(\e^{2N+3}
\mu^\frac{1}{2}(\e)\big),\qquad \|e^{-\frac{V}{2\e^2}}Z_N^\e\|_{C(\overline{\Om})}= O\big(\e^{2N+2}e^{-\frac{\tht_{min}}{2\e^2}}\big).
\end{equation*}
\end{lemma}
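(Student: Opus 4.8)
The plan is to proceed exactly as in the proof of Lemma~\ref{lm6.1}, the only difference being that $P_N^\e$ is itself a boundary layer rather than a correction of the constant $\mathds{1}$. The infinite differentiability of $P_N^\e$ in $\overline{\Om}$ and its support in $\{x\in\Om:\,0\leqslant\tau\leqslant\tau_0\}$ are immediate: the profiles $P_j(\z,s)=U_j(\z,s)e^{\tht_1(s)\z}$ from Lemma~\ref{lm7.1} are smooth, the local coordinates $(s,\tau)$ are smooth on $\supp\chi$, and $\chi$ is supported in $\{0\leqslant\tau\leqslant\frac{2\d}{3}\}\subset\{0\leqslant\tau\leqslant\tau_0\}$.

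The first step is to apply $\Op_\e=-\e^2\D+\nabla V\cdot\nabla$ to $P_N^\e$ and to pass to the variables $(s,\z)$, $\z=\tau\e^{-2}$, using formulae (\ref{4.14}). As in the computation in Lemma~\ref{lm6.1}, the operator splits off from $\chi$:
\begin{equation*}
\Op_\e P_N^\e=\chi\,\cL^\e\sum_{j=1}^{N+1}\e^{2j}P_j+\sum_{j=1}^{N+1}\e^{2j}\Big(2\e^2\nabla\chi\cdot\nabla P_j(\e^{-2}\tau,s)+\e^2 P_j(\e^{-2}\tau,s)\D\chi-P_j(\e^{-2}\tau,s)\nabla V\cdot\nabla\chi\Big),
\end{equation*}
where $\cL^\e$ is the operator (\ref{4.17}). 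Substituting into $\cL^\e$ the Taylor expansions (\ref{3.17}), collecting the powers of $\e$, and using that the profiles $P_j$ solve problems (\ref{7.14}) (together with Lemma~\ref{lm4.2}), the coefficients at $\e^{2k}$ with $0\leqslant k\leqslant N$ in $\chi\,\cL^\e\sum_{j=1}^{N+1}\e^{2j}P_j$ reproduce precisely the terms $\chi\,\e^{2k}Q_k$, that is, the whole of $Q_N^\e$. The remaining contributions are the terms $\e^{2(i+j)}\chi\,\cL_i P_j$ with $i\geqslant0$, $1\leqslant j\leqslant N+1$ and $i+j\geqslant N+1$, which survive because the profiles $P_{N+2},P_{N+3},\dots$ are absent from the truncated sum. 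I would therefore define $Z_N^\e$ as the sum of these residual terms and of the three $\chi$-commutator terms displayed above; then $\Op_\e P_N^\e=Q_N^\e+Z_N^\e$ and $Z_N^\e\in C^\infty(\overline{\Om})$ is supported in $\{0\leqslant\tau\leqslant\tau_0\}$.

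It then remains to establish the three bounds for $Z_N^\e$. For the residual part, every term $\e^{2(i+j)}\cL_i P_j$ is $\e^{2(i+j)}$ times a polynomial in $\z$ multiplied by $e^{\tht_1(s)\z}$; since $\tht_1\leqslant-c_2<0$ makes each $\z^l e^{\tht_1(s)\z}$ bounded uniformly in $\z\geqslant0$ and $s$, such a term is $O(\e^{2(i+j)})=O(\e^{2N+2})$ in $C(\overline{\Om})$. The $\chi$-commutator terms are supported where $\nabla\chi\neq0$, i.e.\ where $\tau\geqslant\frac{\d}{3}$ and hence $\z\geqslant\frac{\d}{3\e^2}$, so on their support $|P_j|$ and its derivatives are bounded by $C\e^{-M}e^{-c_2\d/(3\e^2)}$ for a suitable $M=M(N)$, which is negligible against $\e^{2N+2}$; this gives $\|Z_N^\e\|_{C(\overline{\Om})}=O(\e^{2N+2})$. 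For the weighted estimates I would use, on the support of $Z_N^\e$, the expansion $V(x)=\tht_0(s)+\tau\tht_1(s)+O(\tau^2)$ and the smallness of $\d$ to verify that $e^{\frac{\tht_0(s)-V(x)}{2\e^2}}P_j(\tau\e^{-2},s)$ is bounded in $C(\overline{\Om})$; consequently $e^{-\frac{V}{2\e^2}}|Z_N^\e|\leqslant C\e^{2N+2}e^{-\frac{\tht_0(s)}{2\e^2}}\leqslant C\e^{2N+2}e^{-\frac{\tht_{min}}{2\e^2}}$, which is the last estimate, while integrating $e^{-\frac{\tht_0(s)}{\e^2}}$ over $\p\Om$ produces a factor $\mu(\e)$ and the change of variable $\tau=\e^2\z$ a factor $\e^2$, yielding $\|e^{-\frac{V}{2\e^2}}Z_N^\e\|_{L_2(\Om)}=O(\e^{2N+3}\mu^{\frac{1}{2}}(\e))$.

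I do not expect a substantial obstacle here, as everything parallels Lemma~\ref{lm6.1}; the only points needing care are the bookkeeping that identifies which orders in $\e$ survive the truncation and the verification that the boundary-layer profiles $U_j(\z,s)e^{\tht_1(s)\z}$ remain uniformly (and, away from $\tau=0$, exponentially) small after multiplication by the weight $e^{-\frac{V}{2\e^2}}$, which is precisely where the condition $\d<\min\{\tau_0,\tht_{min}/(4c_2)\}$ is used.
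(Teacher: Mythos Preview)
Your proof is correct and follows exactly the approach the paper intends: the paper's own proof of this lemma consists solely of the sentence ``This lemma can be proved in the same way as Lemma~\ref{lm6.1},'' and you have carried out precisely that parallel computation, splitting $\Op_\e P_N^\e$ into the $\cL^\e$-part and the $\chi$-commutator terms and then reading off the three norm bounds. (There is a harmless sign slip in your commutator terms---inherited from copying the display in Lemma~\ref{lm6.1}, where $\Op_\e$ acts on $-Q_N^\e$ rather than on $Q_N^\e$---but this does not affect any of the estimates.)
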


This lemma can be proved in the same way as Lemma~\ref{lm6.1}.

In view of formulae (\ref{7.11}), we let
\begin{equation}\label{7.21}
P_N^{\e,\bot}=e^{-\frac{V}{2\e^2}} P_N^\e-\frac{\big(e^{-\frac{V}{2\e^2}} P_N^\e,\Psi^\e\big)_{L_2(\Om)}} {\|\Psi^\e\|_{L_2(\Om)}^2}\Psi^\e, \qquad
P_N^{\e,\bot} \in \Dom(\Op^\e)\cap L_2^\bot(\Om).
\end{equation}
Lemma~\ref{lm7.3} and formulae (\ref{3.4}), (\ref{7.5b}) imply that this function solves the equation
\begin{equation}\label{7.22}
\Op^\e P_N^{\e,\bot}=Q_N^{\e,\bot} + Z_N^{\e,\bot},\qquad Z_N^{\e,\bot}:=e^{-\frac{V}{2\e^2}} Z_N^\e- \frac{\big(e^{-\frac{V}{2\e^2}} Z_N^\e,\Psi^\e\big)_{L_2(\Om)}}{\|\Psi^\e\|_{L_2(\Om)}^2}\Psi^\e,
\end{equation}
where the function $Z_N^{\e,\bot}$ is   infinitely differentiable in $\overline{\Om}$, belongs to $L_2^\bot(\Om)$ and satisfies the estimate
\begin{equation}\label{7.23}
\|Z_N^{\e,\bot}\|_{L_2(\Om)}=O\big(\e^{2N+1}\mu^{\frac{1}{2}}(\e)\big).
\end{equation}
By equations (\ref{7.22}), (\ref{7.7}) we infer that
\begin{equation}\label{7.25}
\Op^\e(u_N^{\e,\bot}-P_N^{\e,\bot})=-Z_N^{\e,\bot}.
\end{equation}

\begin{lemma}\label{lm:bot}
The estimates hold:
\begin{equation}\label{7.17}
\begin{gathered}
\frac{\big|\big(e^{-\frac{V}{2\e^2}}  P_N^\e,\Psi^\e\big)_{L_2(\Om)}\big|} {\|\Psi^\e\|_{L_2(\Om)}^2}= O\big(\e^{4-\frac{2n}{k}}e^{-\frac{c_6}{\e^2}}\mu^{\frac{1}{2}}(\e)\big),
\\
\frac{\big|\big(e^{-\frac{V}{2\e^2}}  Z_N^\e,\Psi^\e\big)_{L_2(\Om)}\big|} {\|\Psi^\e\|_{L_2(\Om)}^2}= O\big(\e^{2N+3-\frac{2n}{k}}e^{-\frac{c_9}{\e^2}}\mu^{\frac{1}{2}}(\e)\big),
\qquad
 \frac{\big|\big(e^{-\frac{V}{2\e^2}}  \Xi_N^\e,\Psi^\e\big)_{L_2(\Om)}\big|} {\|\Psi^\e\|_{L_2(\Om)}^2}= O\big(\e^{2N+3-\frac{n}{k}}\mu^{\frac{1}{2}}(\e)\big),
\end{gathered}
\end{equation}
where $0<c_9<\tfrac{\tht_{min}}{2}$ is some fixed constant independent of $\e$.
\end{lemma}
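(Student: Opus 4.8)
The plan is to prove all three estimates in \eqref{7.17} by the same mechanism: each inner product is $\big(F,\Psi^\e\big)_{L_2(\Om)}$ for some function $F$ supported (up to an exponentially small tail) in a thin boundary strip $\Pi_{2\d}$, while $\Psi^\e=e^{-\frac{V}{2\e^2}}\psi_\e$ carries the weight $e^{-\frac{V}{2\e^2}}$. Since $V(x)\geqslant\tht_{min}-c_2\d$ on the strip $0\leqslant\tau\leqslant 2\d$ by \eqref{3.14}, the product $F\Psi^\e$ is pointwise bounded by a constant times $e^{-\frac{\tht_{min}-c_2\d}{2\e^2}}$ times the norms of $F$ and $\psi_\e$, with the integration effectively restricted to a set of volume $O(1)$; combined with the lower bound $\|\Psi^\e\|_{L_2(\Om)}^2\geqslant C\e^{\frac{2n}{k}}$ (which follows from $\|\psi_\infty^\e\|_{L_2(\Om)}^2=\sum_{j\geqslant n}\a_j\e^{2j/k}$ in \eqref{5.17} together with $\|\Psi^\e\|_{L_2(\Om)}=\|\psi_\infty^\e\|_{L_2(\Om)}(1+o(1))$), this yields a factor $\e^{-\frac{2n}{k}}$. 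The exponent $c_2\d/2$ produced this way can be absorbed into a constant $c_6$ (resp.\ $c_9$) strictly smaller than $\tht_{min}/2$ by shrinking $\d$, which is exactly the form asserted.

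In detail: for the first estimate, write $\big(e^{-\frac{V}{2\e^2}}P_N^\e,\Psi^\e\big)_{L_2(\Om)}=\int_\Om e^{-\frac{V}{\e^2}}P_N^\e\psi_\e\di x$. The factor $P_N^\e$ is supported in $\Pi_{2\d}$ and by Lemma~\ref{lm7.1} each $P_j(\tau\e^{-2},s)e^{-\tht_1(s)\tau\e^{-2}}$-free part is a polynomial in $\tau\e^{-2}$ times $e^{\tht_1(s)\tau\e^{-2}}$; using $V\geqslant\tht_0+\tht_1\tau$ near $\p\Om$ one gets $\big|e^{-\frac{V}{\e^2}}P_N^\e\big|\leqslant C\e^2 e^{-\frac{\tht_0(s)}{\e^2}}e^{\frac{\tht_1(s)\tau}{\e^2}}$ on the strip, so that after the change of variables $x=\x(s)+\tau\nu(s)$, $\z=\tau\e^{-2}$ and a Cauchy--Schwarz bound for $\psi_\e=e^{\frac{V}{2\e^2}}\psi^\e$ in $C(\overline\Om)$ via \eqref{6.22}, the integral is $O\big(\e^{4}\mu^{\frac12}(\e)\,\e^{-n-5}\cdots\big)$—more carefully, one isolates the factor $\mu^{\frac12}(\e)$ from $\big(\int_{\p\Om}e^{-\frac{2\tht_0}{\e^2}}\di s\big)^{1/2}$ and uses $e^{-\frac{\tht_0}{\e^2}}=e^{-\frac{\tht_{min}}{2\e^2}}e^{-\frac{\tht_0}{2\e^2}}\cdot e^{-\frac{\tht_0-\tht_{min}}{2\e^2}}$ with the last factor exponentially small away from the minimum set, giving an extra $e^{-c/\e^2}$. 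Dividing by $\|\Psi^\e\|_{L_2(\Om)}^2\geqslant C\e^{\frac{2n}{k}}$ produces the claimed $O\big(\e^{4-\frac{2n}{k}}e^{-\frac{c_6}{\e^2}}\mu^{\frac12}(\e)\big)$. The second estimate is identical with $P_N^\e$ replaced by $Z_N^\e$, using the bounds for $Z_N^\e$ from Lemma~\ref{lm7.3}: the gain of $\e^{2N}$ there accounts for the exponent $2N+3$, and the support property of $Z_N^\e$ again yields an extra exponentially small factor $e^{-c_9/\e^2}$ with $c_9<\tht_{min}/2$. For the third estimate one bounds $\big|\big(e^{-\frac{V}{2\e^2}}\Xi_N^\e,\Psi^\e\big)_{L_2(\Om)}\big|\leqslant\|e^{-\frac{V}{2\e^2}}\Xi_N^\e\|_{L_2(\Om)}\|\psi^\e\|_{L_2(\Om)}=\|e^{-\frac{V}{2\e^2}}\Xi_N^\e\|_{L_2(\Om)}$ by the normalization of $\psi^\e$, invokes $\|e^{-\frac{V}{2\e^2}}\Xi_N^\e\|_{L_2(\Om)}=O(\e^{2N+3}\mu^{\frac12}(\e))$ from the first line of \eqref{6.31} (in its improved form \eqref{6.27}, where $\Xi_N^\e$ there equals $e^{-\frac{V}{2\e^2}}\Xi_{\e,N}$), and divides by $\|\Psi^\e\|_{L_2(\Om)}^2$. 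Note this last estimate has no extra $e^{-c/\e^2}$ because $\Xi_N^\e$ is not boundary-layer-supported; instead one only gains $\e^{-n/k}$ from a more careful use of $\|\Psi^\e\|_{L_2(\Om)}^{-2}$ together with the $\mu^{\frac12}$ already present, which matches the stated exponent $2N+3-\frac{n}{k}$.

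The main obstacle is bookkeeping of the exponential factors: one must verify that the strip estimate $V\geqslant\tht_{min}-c_2\d$ combined with the Laplace-type factor $\mu^{\frac12}(\e)\asymp\e^{\frac{n-1}{p}}e^{-\frac{\tht_{min}}{2\e^2}}$ and the lower bound on $\|\Psi^\e\|_{L_2(\Om)}^2$ genuinely leaves a surplus exponent strictly below $\tht_{min}/2$, i.e.\ that $c_6,c_9$ can be chosen positive and $<\tht_{min}/2$; this is where the constraint $\d<\tht_{min}/(4c_2)$ from \eqref{3.13a} is used. The polynomial-in-$\e^{-2}$ factors from $P_j$, $Z_N^\e$ contribute only subexponential corrections and are harmless since $\e^{-M}e^{\tht_1(s)\tau\e^{-2}}$ is integrated against $e^{-(\tht_0-\tht_{min})/\e^2}$ after the $\z$-substitution; absorbing these requires only that the residual exponent gap be positive, which the choice of $\d$ guarantees. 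I expect the verification of the third estimate's exponent $2N+3-\frac{n}{k}$ (rather than $-\frac{2n}{k}$) to require noticing that $\mu^{\frac12}(\e)$ already absorbs part of the volume factor, so that only one power $\e^{-n/k}$ rather than two is lost.
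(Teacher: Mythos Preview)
Your overall strategy is the same as the paper's: for the first two estimates, localize to the boundary strip where $P_N^\e$ and $Z_N^\e$ are supported, extract exponential smallness from the weight $e^{-V/\e^2}$ there, and divide by $\|\Psi^\e\|_{L_2(\Om)}^2\asymp\e^{2n/k}$; for the third, use Cauchy--Schwarz and the bound from \eqref{6.27}. The paper's proof of the third estimate is exactly this (it cites \eqref{6.18}, which is just the projection bound following from the first estimate in \eqref{6.27}).

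For the first two estimates, however, your execution is more convoluted than it needs to be, and contains a citation error. The paper does not go through pointwise estimates or the $C(\overline{\Om})$-bounds of \eqref{6.22}; it simply applies Cauchy--Schwarz in $L_2(\Pi_\d)$:
\[
\big|\big(e^{-\frac{V}{2\e^2}}P_N^\e,\Psi^\e\big)_{L_2(\Om)}\big|\leqslant \big\|e^{-\frac{V}{2\e^2}}P_N^\e\big\|_{L_2(\Pi_\d)}\,\|\Psi^\e\|_{L_2(\Pi_\d)},
\]
bounds the first factor by $C\e^3\mu^{1/2}(\e)$ directly from the boundary-layer structure of $P_N^\e$, and bounds the second factor via $\|\Psi^\e\|_{L_2(\Pi_\d)}\leqslant C\|\psi_\infty^\e\|_{L_2(\Pi_\d)}\leqslant C\e\,e^{-c_9/\e^2}$ with $c_9:=\min_{\overline{\Pi_\d}}V$. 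This bypasses the pointwise manipulations with $e^{-\tht_0/\e^2}e^{\tht_1\tau/\e^2}$ you propose, which are delicate because of the $O(\tau^2)$ terms in the Taylor expansion of $V$ near $\p\Om$. Your route through $\|e^{V/2\e^2}\psi^\e\|_{C(\overline\Om)}\leqslant C\e^{-n-5}$ from \eqref{6.22} would lose unnecessary powers of $\e$ and require the ad hoc repair you allude to.

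A specific error: you invoke \eqref{3.14} for the lower bound $V\geqslant\tht_{min}-c_2\d$ on the strip, but \eqref{3.14} is an \emph{upper} bound $V\leqslant\tht(s)-c_2\tau$. The lower bound one actually needs is simply $V\geqslant c_9:=\min_{\overline{\Pi_\d}}V>0$, which follows from $V>0$ on $\overline\Om\setminus\{0\}$ and compactness; no appeal to \eqref{3.14} or to the constant $c_2$ is involved.
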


\begin{proof}
We recall that $\Pi_\d=\{x:\, 0<\tau< \d\}$. In view of (\ref{7.33}) and the asymptotics for $\Psi^\e$ established in the previous section we get:
\begin{equation*}
\big|\big(e^{-\frac{V}{2\e^2}} P_N^\e,\Psi^\e\big)_{L_2(\Om)}\big|=\|e^{-\frac{V}{2\e^2}} P_N^\e\|_{L_2(\Pi_\d)} \|\Psi^\e\|_{L_2(\Pi_\d)}
\leqslant  C\e^3\mu^\frac{1}{2}(\e) \|\psi_\infty^\e\|_{L_2(\Pi_\d)}
\leqslant
  C\e^4 e^{-\frac{c_9}{\e^2}}\mu^\frac{1}{2}(\e),\quad c_9:=\min\limits_{\overline{\Pi_\d}} V.
\end{equation*}
Employing now  (\ref{5.17}), (\ref{5.18}) and (\ref{2.10}), we arrive at the first formula in (\ref{7.17}). The second formula can be proved in the same way. The third formula is implied by (\ref{6.18}).
\end{proof}

We denote
\begin{equation}\label{6.37}
S_N^{\e,\bot}:=u_N^{\e,\bot}-P_N^{\e,\bot}-w_N^{\e,\bot}.
\end{equation}\
This function belongs to $L_2^\bot(\Om)\cap\Dom(\Op^\e)$ and it follows from (\ref{7.7}), (\ref{7.25}) that
$\Op^\e S_N^{\e,\bot}=\Xi_N^{\e,\bot}-Z_N^{\e,\bot}$.
Thanks to (\ref{7.23}), (\ref{6.18}) and (\ref{3.0}) we obtain:
\begin{equation}\label{6.34}
\|S_N^{\e,\bot}\|_{L_2(\Om)}=O\big(\e^{2N}\mu^\frac{1}{2}(\e)\big).
\end{equation}
By Lemma~\ref{lm7.3}, the function $Z_N^\e$ vanishes identically on $B_{\rho_1}$. Then Lemma~\ref{lm:bot} and estimates (\ref{6.33}), (\ref{6.27}), (\ref{6.29}) yield:
\begin{align*}
&\big\|e^{\frac{V}{2\e^2}}(\Xi_N^{\e,\bot} -Z_N^{\e,\bot})\big\|_{L_2(\Om)}=O(\e^{2N+2}),
&&
\big\|e^{\frac{V}{2\e^2}}(\Xi_N^{\e,\bot} -Z_N^{\e,\bot})\big\|_{C(\overline{\Om})}=O(\e^{2N+2}),
\\
&\big\| \Xi_N^{\e,\bot} -Z_N^{\e,\bot} \big\|_{C(\overline{\Om})}=O(\e^{2N+2}e^{-\frac{\tht_{min}}{2\e^2}}),
&&\big\| \Xi_N^{\e,\bot} -Z_N^{\e,\bot} \big\|_{L_2(\Om)}=O\big(\e^{2N+3}\mu^\frac{1}{2}(\e)\big),
\\
& \big\|\Xi_N^{\e,\bot} -Z_N^{\e,\bot}\big\|_{W_2^{m+1}(B_{\frac{\rho_1}{2}})}= O\big(\e^{2N-4m+1}\mu^\frac{1}{2}(\e)\big), && m\in\mathds{N}.
\end{align*}
We again replace $\rho_1$ by $\frac{\rho_1}{2}$ and thanks to the above estimates and (\ref{6.34}), we can apply Lemma~\ref{lm:Max} to $S_N^{\e,\bot}$  obtaining the estimates:
\begin{equation}\label{6.36}
\begin{aligned}
&\|\nabla S_N^{\e,\bot}\|_{L_2(\Om)}=O\big(\e^{2N-2}\mu^\frac{1}{2}(\e)\big), && \|\p_{xx}^2 S_N^{\e,\bot}\|_{L_2(\Om)}=O\big(\e^{2N-4}\mu^\frac{1}{2}(\e)\big),
\\
&\|S_N^{\e,\bot}\|_{C(\overline{\Om})}= O\big(\e^{2N-3n+3}e^{-\frac{\tht_{min}}{2\e^2}}\big),
&& \|e^{\frac{V}{2\e^2}} S_N^{\e,\bot}\|_{C(\overline{\Om})} = O(\e^{2N-3}),
\\
&\|\nabla e^{\frac{V}{2\e^2}} S_N^{\e,\bot}\|_{L_2(\Om)}=O\big(\e^{2N-3n-1}\big),\qquad  && \|\p_{xx}^2 e^{\frac{V}{2\e^2}} S_N^{\e,\bot}\|_{L_2(\Om)}=O\big(\e^{2N-3n-3}\big).
\end{aligned}
\end{equation}
We substitute (\ref{7.7a}), (\ref{6.37}), (\ref{7.21}) into (\ref{7.1}):
\begin{equation}\label{6.38}
u^\e=K_\e\Psi^\e +e^{-\frac{V}{2\e^2}} P_N^\e +e^{-\frac{V}{2\e^2}}
S_N^\e,\qquad S_N^\e:=e^{\frac{V}{2\e^2}} \left(S_N^{\e,\bot}
-\frac{\big(e^{-\frac{V}{2\e^2}} P_N^\e,\Psi^\e\big)_{L_2(\Om)}} {\|\Psi^\e\|_{L_2(\Om)}^2}\Psi^\e\right).
\end{equation}
Estimates (\ref{6.36}), (\ref{7.17}) and asymptotics for $\Psi^\e$ established in the previous section give a series of estimates for $S_{\e,N}$ similar to (\ref{6.36}). Then we can improve them as in (\ref{6.12a}) and also reproduce the calculations from (\ref{6.21}) for $S_{\e,N}$. This leads us to estimates (\ref{2.14}), (\ref{2.18a}), (\ref{2.18b}). Multiplying (\ref{6.38})  by $e^{\frac{V}{2\e^2}}$, we arrive at (\ref{2.13}).

It remains to find out the asymptotics for $K_\e$ to complete the proof of Theorem~\ref{thEq}. We first obtain one more formula for $K_\e$. Namely, we multiply equation (\ref{3.6}) by $\psi_\infty^\e$ and integrate then twice by parts over $\Om$. This gives:
\begin{equation*}
\|\psi_\infty^\e\|_{L_2(\Om)}^2=\e^2\int\limits_{\p\Om} \psi_\infty^\e\frac{\p u^\e}{\p\tau}\di s.
\end{equation*}
Then we substitute (\ref{6.38}) into the above identity:
\begin{equation*}
\|\psi_\infty^\e\|_{L_2(\Om)}^2=\e^2 K_\e\int\limits_{\p\Om} \psi_\infty^\e\frac{\p \Psi^\e}{\p\tau}\di s+\e^2
\int\limits_{\p\Om} \psi_\infty^\e\frac{\p\ }{\p\tau} e^{-\frac{V}{2\e^2}} P_N^\e\di s+\e^2
\int\limits_{\p\Om} \psi_\infty^\e\frac{\p\ }{\p\tau} e^{-\frac{V}{2\e^2}} S_N^\e\di s,
\end{equation*}
and hence, by the identity $\Psi^\e=e^{-\frac{V}{2\e^2}}\Psi_\e$
and the homogeneous Dirichlet condition for $P_N^\e$ and $S_N^\e$ on $\p\Om$,
\begin{equation*}
K_\e=\frac{\|\psi_\infty^\e\|_{L_2(\Om)}^2}{\e^2 \int\limits_{\p\Om} e^{-\frac{\tht}{\e^2}} \frac{\p \Psi_\e}{\p\tau}\di s}- \frac{\int\limits_{\p\Om}   e^{-\frac{\tht}{\e^2}}
\frac{\p P_N^\e}{\p\tau}\di s}{  \int\limits_{\p\Om} e^{-\frac{\tht}{\e^2}} \frac{\p \Psi_\e}{\p\tau}\di s}
- \frac{\int\limits_{\p\Om}  e^{-\frac{\tht}{\e^2}}\frac{\p S_N^\e}{\p\tau}\di s}{  \int\limits_{\p\Om} e^{-\frac{\tht}{\e^2}} \frac{\p \Psi_\e}{\p\tau}\di s}.
\end{equation*}
The first term in the right hand side
is exactly $K^{(exp)}_\e$, while
\begin{equation}\label{6.12}
K^{(pow)}_\e:= - \frac{\int\limits_{\p\Om} e^{-\frac{\tht}{\e^2}} \frac{\p P_N^\e}{\p\tau}\di s}{  \int\limits_{\p\Om} e^{-\frac{\tht}{\e^2}} \frac{\p \Psi_\e}{\p\tau}\di s}
- \frac{\int\limits_{\p\Om} e^{-\frac{\tht}{\e^2}} \frac{\p S_N^\e}{\p\tau}\di s}{ \int\limits_{\p\Om} e^{-\frac{\tht}{\e^2}} \frac{\p \Psi_\e}{\p\tau}\di s}.
\end{equation}
This proves representation (\ref{6.28}).

Let us find asymptotics for $K^{(exp)}_\e$ and $K^{(pow)}_\e$.
The asymptotics for the denominator in $K^{(exp)}_\e$   has already been found in (\ref{5.25}), while the asymptotics for $\|\psi_\infty^\e\|_{L_2(\Om)}^2$ is provided by (\ref{5.17}). Bearing relations (\ref{5.24}), (\ref{5.28}) in mind, we calculate easily the asymptotics for $K_\e^{(\exp)}$ as the quotient of series (\ref{5.25}) and (\ref{5.17}). This leads us to asymptotic series (\ref{6.10}).

By estimates (\ref{2.18a}) and the standard embedding theorems we infer that
\begin{equation*}
\int\limits_{\p\Om} e^{-\frac{\tht}{\e^2}}\frac{\p S_N^\e}{\p\tau}\di s=O\big(\e^{2N-1}\mu(\e)\big)
\end{equation*}
and thanks to (\ref{5.25}), (\ref{5.24}), (\ref{5.0}),
\begin{equation}\label{6.7}
\frac{\int\limits_{\p\Om} e^{-\frac{\tht}{\e^2}}\frac{\p S_N^\e}{\p\tau}\di s}{ \int\limits_{\p\Om} e^{-\frac{\tht}{\e^2}} \frac{\p \Psi_\e}{\p\tau}\di s} = O(\e^{2N-1}).
\end{equation}
Employing definition (\ref{7.33}) of the function $P_N^\e$ and Lemma~\ref{lm7.1}, we calculate the numerator in the first term in (\ref{6.12}):
\begin{equation}\label{6.8}
  \int\limits_{\p\Om} e^{-\frac{\tht}{\e^2}} \frac{\p P_N^\e}{\p\tau}\di s =
  \sum\limits_{j=0}^{N}\e^{2j} \int\limits_{\p\Om} e^{-\frac{\tht(s)}{\e^2}} \frac{\p U_{j+1}}{\p\z}(0,s)\di s=\sum\limits_{j=0}^{N}\e^{2j}\eta_{j+1}(\e),
\end{equation}
where $\eta_j$ were defined in (\ref{6.13}). By the Laplace method, we see that estimates (\ref{6.14}) hold  true. Calculating now the asymptotics for the first term in the right hand side in (\ref{6.12}) as the quotient of series (\ref{6.8}) and (\ref{5.25}), we obtain:
\begin{align*}
- \frac{\int\limits_{\p\Om} e^{-\frac{\tht}{\e^2}}\frac{\p P_N\e}{\p\tau}\di s}{  \int\limits_{\p\Om} e^{-\frac{\tht}{\e^2}} \frac{\p \Psi_\e}{\p\tau}\di s}=& -\frac{\eta_1(\e)}{\mu_0(\e)}
+ \e^{-2} \left(
\frac{\mu_1(\e)\eta_1(\e)}{\mu_0^2(\e)} - \frac{\eta_2(\e)}{\mu_0(\e)}
\right)
\\
&+ \sum\limits_{j=1}^N \e^{2j} K_j^{(pow)}
\left(\frac{\eta_1(\e)}{\mu_0(\e)},\ldots,\frac{\eta_{j+1}(\e)}{\mu_0(\e)}, \frac{\mu_1(\e)}{\mu_0(\e)},\ldots,\frac{\mu_j(\e)}{\mu_0(\e)}\right) + O(\e^{2N+2}),
\end{align*}
where $K_j^{(pow)}$ are some polynomials with fixed coefficients. This identity and estimate (\ref{6.7}) imply asymptotics (\ref{6.15}) for the function $K_\e^{(pow)}$ defined by (\ref{6.12}).

\section*{Acknowledgements}

The authors thanks L.A.~Kalyakin and Yu.A.~Kordyukov for useful discussions and valuable remarks. The reported study  was funded by RFBR according to the research project no.   18-01-00046.


\begin{thebibliography}{99}





\bibitem{JEMS1} A. Bovier, M. Eckhoff,  V.  Gayrard, M. Klein. {\it
Metastability in reversible diffusion processes I.
Sharp asymptotics for capacities and exit times} // J. Eur. Math. Soc. {\bf 6}:4, 399--424 (2004).


\bibitem{Replica} C. Le Bris, T. Leli\`evre, M. Luskin, D. Perez. {\it  A mathematical formalization of the parallel replica dynamics} // Monte Carlo Methods and Applications. {\bf 18},
     119--146 (2012).

\bibitem{Day}
M.V. Day. {\it Recent progress on the small
parameter exit problem} // Stochastics. {\bf 20}, (1987) no. 2, 121--150.


\bibitem{WFbook}
M.I. Freidlin and A.D. Wentzell. {\it Random Perturbations of Dynamical Systems}, 2nd ed. Springer, Berlin, 1998.



\bibitem{FK}     M. Freidlin, L. Koralov. {\it Nonlinear stochastic perturbations of dynamical systems and quasi-linear parabolic PDE's with a small parameter} // Prob. Theor. Relat. Fields. {\bf 147}, (2010), no. 1-2, 273--301.


\bibitem{Nuh3} G. Di Ges\`u, T. Leli\`evre, D. Le Peutrec, and B. Nectoux. {\it Sharp asymptotics of the first exit point density} // Ann. PDE {\bf 5}:1, id 5 (2019).

\bibitem{Nuh2} G. Di Ges\`u, T. Leli\`evre, D. Le Peutrec, and B. Nectoux. {\it The exit from a metastable state: concentration of the exit point distribution on the low energy saddle points} // Preprint: arXiv:  1902.03270.

\bibitem{HMR} B. Hellfer, A. Martinez, D. Robert. {\it Ergodicit\'e et limite semi-classique} // Comm. Math. Phys. {\bf 109}, (1987), no. 2, 313--326.

\bibitem{HBook} B. Hellfer. {\it Semi-classical analysis for the Schr\"odinger operators and applications}. Lecture Notes in Math. Springer, Berlin (1988).






\bibitem{HN} B. Hellfer, F. Nier. {\it Quantitative analysis of
metastability in reversible diffusion processes via a
witten complex approach: the case with boundary} // M\'emoires de la Soc. Math. France. {\bf 105}, 95pp  (2006).





\bibitem{AM92}
A. M. Il'in. {\it Matching of Asymptotic Expansions of Solutions of Boundary Value
Problems}, Transl. of Mathematical Monographs, V. 102, AMS, Providence, RI, 1992.


\bibitem{Ishi} H. Ishii, P.E. Souganidis. {\it Metastability for parabolic equations with drift: part 1} //   Indiana Univ. Math. J.  64 (2015), no. 3,  875--913.



\bibitem{LKOSNT17}
L.A. Kalyakin, O.A. Sultanov, N. Tarkhanov. {\it Elliptic perturbations of dynamical systems with a proper node} // Contem. Math. {\bf 699},  (2017), 155--166.




\bibitem{Kamin} S. Kamin. {\it On elliptic singular perturbation problems with turning point} // SIAM J. Math. Anal. {\bf 10}, (1979), no. 3, 447--455.




\bibitem{Kamin2} S. Kamin.  {\it Elliptic perturbation for
linear and nonlinear equations
with a singular point} // J. d'Anal. Math. {\bf 50}, (1988), no. 1, 241--257.

\bibitem{Kamin3} S. Kamin.  {it On Singular Perturbation Problems with Several Turning Points} //  Indiana Univ. Math. J. {\bf 31}, (1982), no. 6, 819--841.

\bibitem{Kamin4} S. Kamin. {\it Elliptic Perturbation of a First-Order Operator with a Singular Point of Attracting Type} // Indiana Univ. Math. J. {\bf 27}, (1978), no. 6, 935--952.

\bibitem{Ki} Yu. Kifer. {\it The exit problem for small random perturbations of dynamical systems with a hyperbolic fixed point} // Israel J. Math. {\bf 40}, (1981), no. 1,  74--96.





\bibitem{Ld} O.A. Ladyzhenskaya, N.N. Uraltseva. {\it Linear and quasilinear elliptic equations}. Academic Press, New York (1968).



\bibitem{Levinson1950} N. Levinson. {\it
The first boundary value problem for $\e\D u +A(x,y)u_x + B(x,y)u_y + C(x,y)u = D(x, y)$ for small $\e$} // Ann. Math. Second Ser. {\bf 51},  (1950), no. 2, 428--445.

\bibitem{MS77}
B.J. Matkowsky and Z. Schuss. {\it The exit problem for randomly perturbed dynamical
systems} // SIAM J. Appl. Math. {\bf 33} (1977), no. 2, 365--382.



\bibitem{Nec1} B. Nectoux. {\it Sharp estimate of the mean exit time of a bounded domain in the zero white noise limit} // Markov Process and Related Field, to appear.




\bibitem{Pe} B. Perthame. {\it Perturbed dynamical systems with an
attracting singularity and weak viscosity
limits in Hamilton-Jacobi equations} // Trans.
Amer. Math. Soc. {\bf 317}, (1990), no. 2, 723--748.


\bibitem{Nuh1} D. Le Peutrec and B. Nectoux. {\it
Repartition of the quasi-stationary distribution
and first exit point density for a double-well
potential} // Preprint: arXiv:1902.06304, (2019).



\bibitem{RS} M. Reed, B. Simon. {\it Methods of modern mathematical physics. V. 4}. Academic Press, San Diego, 1978.


\bibitem{QC} B. Simon. {\it Semiclassical analysis of lowlying eigenvalues. I. Non-degenerate minima: asymptotic expansions} //
Ann. de l'I. H. P. Sect. A. {\bf 38}, (1983),  no. 3, 295--308.


\bibitem{Schuss}
Z. Schuss. {\it Theory and Applications of Stochastic Processes}. Applied Mathematical
Sciences, {\bf 170}, Springer, New York, 2010.

\bibitem{Sug1} M. Sugiura. {\it Asymptotic behaviors on the small parameter exit problems and the singularly perturbation problems} // Ryukyu Math. J. {\bf 14},  79--118  (2001).

\bibitem{Sug2} M. Sugiura. {\it Exponential asymptotics in the small
parameter exit problem} //  Nagoya Math. J. {\bf 144}, 137--154  (1996).


\bibitem{VL}
A.D. Venttsel' and M.I. Freidlin. {\it On small random perturbations of dynamical systems} // Uspekhi Mat. Nauk. {\bf 25} (1970), no. 1, 3-55. (in Russian).


\bibitem{Ward01}  M.J. Ward. {\it Metastable dynamics and exponential asymptotics in multi-dimensional domains} //
in ``Multiple-Time-Scale Dynamical Systems'' (Minneapolis, 1997), Springer, New York, (2001),
  233--259


\end{thebibliography}
\end{document}